\documentclass[a4paper]{amsart}

\usepackage[T1]{fontenc}
\usepackage{lmodern}
\usepackage[utf8]{inputenc}
\usepackage{amssymb}
\usepackage[all]{xy}
\usepackage{enumitem}
\usepackage{mathrsfs,dsfont,mathtools}
\usepackage{nicefrac}

\usepackage[pdftitle={Homological algebra in bivariant K-theory and other triangulated categories. II},
  pdfauthor={Ralf Meyer},
  pdfsubject={Mathematics; MSC 18E30, 19K35, 46L80, 55U35}
]{hyperref}
\newcommand*{\MRref}[2]{\href{http://www.ams.org/mathscinet-getitem?mr=#1}{MR \textbf{#1}}}
\newcommand*{\arxiv}[1]{\href{http://www.arxiv.org/abs/#1}{arXiv: #1}}

\usepackage[lite]{amsrefs}
\usepackage{microtype}

\numberwithin{equation}{section}
\theoremstyle{plain}
\newtheorem{theorem}[equation]{Theorem}
\newtheorem{lemma}[equation]{Lemma}
\newtheorem{proposition}[equation]{Proposition}

\theoremstyle{definition}
\newtheorem{definition}[equation]{Definition}
\newtheorem{notation}[equation]{Notation}
\theoremstyle{remark}
\newtheorem{remark}[equation]{Remark}
\newtheorem{example}[equation]{Example}

\newcommand*{\C}{\mathbb C}
\newcommand*{\Z}{\mathbb Z}
\newcommand*{\N}{\mathbb N}

\newcommand*{\Left}{\mathbb L}
\newcommand*{\Right}{\mathbb R}

\newcommand*{\KK}{\textup{KK}}
\newcommand*{\K}{\textup{K}}

\newcommand*{\Bad}{\textup{\sffamily Bad}}

\newcommand*{\ID}{\textup{id}}

\newcommand*{\op}{\textup{op}}
\newcommand*{\Cst}{\textup C^*}

\newcommand*{\contra}{G} 

\newcommand*{\Ab}{\mathfrak{Ab}} 
\newcommand*{\KKcat}{\mathfrak{KK}}
\newcommand*{\RKKcat}{\mathfrak{RKK}}

\newcommand*{\CONT}{\textup C} 
\newcommand*{\Cat}{\mathfrak C}  
\newcommand*{\Addi}{\mathfrak C} 
\newcommand*{\Abel}{\mathfrak A} 
\newcommand*{\Tri}{\mathfrak T}  
\newcommand*{\Ideal}{\mathfrak I}
\newcommand*{\Null}{\mathfrak N} 
\newcommand*{\Local}{\mathfrak L}
\newcommand*{\Proj}{\mathfrak P} 

\newcommand*{\CC}{\mathcal{CC}}
\newcommand*{\CI}{\mathcal{CI}}
\newcommand*{\VC}{\mathcal{I}}
\newcommand*{\Fam}{\mathcal{F}} 

\newcommand*{\EG}{\mathcal E}


\newcommand*{\nb}{\nobreakdash}  
\newcommand*{\alb}{\hspace{0pt}} 

\mathtoolsset{mathic}
\DeclarePairedDelimiter{\abs}{\lvert}{\rvert}

\DeclarePairedDelimiter{\gen}{\langle}{\rangle}

\DeclareMathOperator{\hoinjlim}{ho-\varinjlim}

\DeclareMathOperator{\range}{range}
\DeclareMathOperator{\Ext}{Ext}
\DeclareMathOperator{\Hom}{Hom}
\DeclareMathOperator{\Res}{Res}
\DeclareMathOperator{\Ind}{Ind}
\DeclareMathOperator{\Ho}{Ho}
\DeclareMathOperator{\Rep}{Rep}

\newcommand*{\inOb}{\mathrel{\in\in}}
\newcommand*{\defeq}{\mathrel{\vcentcolon=}}

\newcommand*{\into}{\rightarrowtail}
\newcommand*{\prto}{\twoheadrightarrow}
\newcommand*{\lad}{\vdash}
\newcommand*{\rcross}{\mathbin{\ltimes_\textup{r}}}

\newcommand*{\blank}{\text{\textvisiblespace}}

\hyphenation{Ver-dier mon-oid-al Gro-then-dieck}

\begin{document}

\title[Homological algebra in bivariant K-theory]{Homological algebra in bivariant K-theory and other triangulated categories. II}

\author{Ralf Meyer}
\email{rameyer@uni-math.gwdg.de}

\address{Mathematisches Institut\\
  Georg-August Universit\"at G\"ottingen\\
  Bunsenstra\ss{}e 3--5\\
  37073 G\"ottingen\\
  Germany}

\begin{abstract}
  We use homological ideals in triangulated categories to get a
  sufficient criterion for a pair of subcategories in a
  triangulated category to be complementary.  We apply this
  criterion to construct the Baum--Connes assembly map for
  locally compact groups and torsion-free discrete quantum
  groups.  Our methods are related to the abstract version of
  the Adams spectral sequence by Brinkmann and Christensen.
\end{abstract}

\subjclass[2000]{18E30, 19K35, 46L80, 55U35}

\maketitle

\section{Introduction}
\label{sec:intro}

The framework of \emph{triangulated categories} is ideal to
extend basic constructions from homotopy theory to categories
of \(\Cst\)\nb-algebras.  It provides a uniform setting for
various problems in non-commutative topology, including
homotopy colimits and Mayer--Vietoris sequences, universal
coefficient theorems, and generalisations of the Baum--Connes
assembly map (see \cites{Meyer-Nest:BC,
  Meyer-Nest:Homology_in_KK, Meyer-Nest:BC_Coactions,
  Meyer-Nest:Bootstrap, Meyer-Nest:Filtrated_K}).  More
specifically, the Baum--Connes assembly map for coactions of
certain compact Lie groups, which is studied
in~\cite{Meyer-Nest:BC_Coactions}, is always an isomorphism and
it is closely related to a universal coefficient theorem for
equivariant Kasparov theory by Jonathan Rosenberg and Claude
Schochet (\cite{Rosenberg-Schochet:Kunneth}).  Universal
coefficient theorems for Kirchberg's bivariant \(\K\)\nb-theory
for \(\Cst\)\nb-algebras over certain finite topological spaces
are derived in \cites{Meyer-Nest:Bootstrap,
  Meyer-Nest:Filtrated_K}.

This article continues~\cite{Meyer-Nest:Homology_in_KK}, which
deals with a framework for carrying over familiar notions from
homological algebra to general triangulated categories.  Before
we explain what \emph{this} article is about, we outline some
important ideas from~\cite{Meyer-Nest:Homology_in_KK}.

The localisation (or total derived functor) of an additive
functor between Abelian categories is a functor between their
derived categories.  Mapping chain complexes to chain
complexes, it belongs to the world of triangulated categories
by definition.  Although the more classical derived functors
originally live in the underlying Abelian categories, they can
be carried over to triangulated categories as well.

Both localisations and derived functors require additional
structure on a triangulated category to be defined.  For the
localisation of a functor, we specify the \emph{subcategory} to
localise at, consisting of all \emph{objects} on which the
localisation vanishes.  For its \emph{derived functors}, we
specify an \emph{ideal}, consisting of all \emph{morphisms} on
which the derived functors vanish.

The idea to use ideals in triangulated categories goes back to
Daniel Christensen~\cite{Christensen:Ideals}.  Some important
related concepts are due to Apostolos
Beligiannis~\cite{Beligiannis:Relative}, who uses a slightly
different but equivalent setup, which is inspired by the notion
of an exact category in homological algebra.

A \emph{homological ideal} in a triangulated category~\(\Tri\)
is, by definition, the kernel of a stable homological functor
(see~\cite{Meyer-Nest:Homology_in_KK}).  Such an
ideal~\(\Ideal\) allows us to carry over various notions of
homological algebra to~\(\Tri\).  The ultimate explanation for
this is that a homological ideal generates a canonical
homological functor to a certain Abelian category, namely, the
\emph{universal} \(\Ideal\)\nb-exact stable homological functor
\(H_\Ideal\colon \Tri\to\Abel_\Ideal\Tri\).  All homological
notions in~\(\Tri\) defined using the ideal~\(\Ideal\) reflect
familiar notions in this Abelian category.  The homological
algebra in the target Abelian category \(\Abel_\Ideal\Tri\)
provides a rough \emph{Abelian approximation} to the
category~\(\Tri\).

An interesting and typical example is the \(G\)\nb-equivariant
Kasparov category \(\KKcat^G\) for a countable discrete
group~\(G\).  Let~\(\Ideal\) be the ideal defined by the
\(\K\)\nb-theory functor, that is, an element of \(\KK^G(A,B)\)
belongs to the ideal if it induces the zero map
\(\K_*(A)\to\K_*(B)\).  The resulting Abelian approximation
\(\Abel_\Ideal(\KKcat^G)\) is the category of all
\(\Z/2\)\nb-graded countable modules over the group ring
\(\Z[G]\), and the universal functor maps a
\(\Cst\)\nb-algebra~\(A\) with an action of~\(G\) to its
\(\K\)\nb-theory, equipped with the induced action of~\(G\)
(this is a special case of a result
in~\cite{Meyer-Nest:Homology_in_KK}).

Notice that the passage to the universal functor adds the group
action on \(\K_*(A)\).  Forgetting this group action does not
change the ideal defined by the functor, but it kills most
interesting homological algebra.  (In
Section~\ref{sec:applications_BC}, we will actually consider a
smaller ideal in \(\KKcat^G\) that is more closely related to
the Baum--Connes conjecture, but leads to a more complicated
Abelian approximation.)

The Abelian category \(\Abel_\Ideal\Tri\) is usually not a
localisation of~\(\Tri\): we must modify both morphisms and
objects to get an Abelian category.  Instead, it is described
in~\cite{Beligiannis:Relative} as a localisation of the Abelian
category containing~\(\Tri\) constructed by Peter Freyd.  The
main innovation in~\cite{Meyer-Nest:Homology_in_KK} is a
concrete criterion for a stable homological functor to be
universal, which involves its partially defined left adjoint.
Using this criterion, we can often find rather concrete models
for the universal functor -- as in the example mentioned
above -- and then compute derived functors associated to the
ideal.

What do the derived functors of a homological functor
on~\(\Tri\) tell us about the original functor?  In general,
these derived functors are always related to the original
functor by a spectral sequence, whose convergence we will
discuss below.  This result is mainly of theoretical importance
because spectral sequence computations are almost impossible
without additional simplifying assumptions.  But given how much
information is lost by passing to an Abelian category, we
cannot hope for much more than a spectral sequence.

The spectral sequence that links a functor to its derived
functors was already discovered in the 1960s before
triangulated categories became popular.  First Frank Adams
treated an important special case in stable homotopy
theory -- the Adams spectral
sequence~\cite{Adams:Stable_homotopy}.  This was reformulated
in an abstract setting by Hans-Berndt
Brinkmann~\cite{Brinkmann:Relative}.  Daniel
Christensen~\cite{Christensen:Ideals} formulated the Adams
spectral sequence in the setting of triangulated categories,
apparently unaware of Brinkmann's work.

Given the sources of the spectral sequence, we call it the
\emph{ABC spectral sequence} here.  We describe its
construction and its higher pages in greater detail than
previous authors and weaken the assumptions needed to guarantee
its convergence.

I was drawn towards this theory because similar ideas provide
an effective method to prove that pairs of subcategories are
complementary; this is the most difficult technical aspect of
the construction of the Baum--Connes assembly map
in~\cite{Meyer-Nest:BC}.  In Section~\ref{sec:applications_BC},
we first apply our new criterion to the group case already
treated in~\cite{Meyer-Nest:BC} and then define an analogue of
the Baum--Connes assembly map for all ``torsion-free'' discrete
quantum groups.  More precisely, we construct an assembly map
for all discrete quantum groups, but since this map does not
take into account torsion, it is not the right analogue of the
Baum--Connes assembly map unless the quantum group in question
is torsion-free.

A built-in feature of our new assembly map is that its domain
is computed by a spectral sequence -- the ABC spectral sequence
-- whose second page is quite accessible.  The spectral
sequence computation is very difficult, but an operator
algebraist might consider it to be a topological problem, that
is, Someone Else's Problem.  His own problem is to find out
when the assembly map is an isomorphism.  Given our experience
with the group case, this should happen often but not always.
So far -- besides classical groups -- only the duals of certain
compact Lie groups and quantum \(\textup{SU}(2)\) have been
treated in~\cite{Meyer-Nest:BC_Coactions} and
in~\cite{Voigt:Baum-Connes_qSU2}, respectively.  For the
alternative approach by Aderemi Kuku and Debashish Goswami
in~\cite{Goswami-Kuku:BC_quantum}, it is unclear whether the
domain of the assembly map is computable by topological
methods.

Our criterion for complementarity of two subcategories is also
useful in situations that have nothing to do with bivariant
\(\K\)\nb-theory.  The improvement upon similar criteria
in~\cite{Beligiannis:Relative} is that we can cover categories
that are not compactly generated: what we need is an ideal with
enough projective objects that is compatible with countable
direct sums.  This assumption is still satisfied for the ideals
that appear in connection with the Baum--Connes assembly map,
although the categories in question are probably not compactly
generated.

More precisely, the criterion is the following.  Let~\(\Ideal\)
be a homological ideal in a triangulated category~\(\Tri\).  We
assume that~\(\Tri\) has countable direct sums and that the
ideal~\(\Ideal\) is compatible with countable direct sums in a
suitable sense.  Furthermore, we assume that~\(\Ideal\) has
enough projective objects.  Let \(\Proj_\Ideal\subseteq\Tri\)
be the class of \(\Ideal\)\nb-projective objects in~\(\Tri\)
and let \(\gen{\Proj_\Ideal}\) be the localising subcategory
generated by it, that is, the smallest triangulated subcategory
that is closed under countable direct sums and
contains~\(\Proj_\Ideal\).  Finally, let~\(\Null_\Ideal\) be
the subcategory of \(\Ideal\)\nb-contractible objects.  Under
the assumptions above, the pair of subcategories
\((\gen{\Proj_\Ideal},\Null_\Ideal)\) is complementary, that
is, \(\Tri_*(P,N)=0\) whenever \(P\inOb\gen{\Proj_\Ideal}\) and
\(N\inOb\Null_\Ideal\), and any object \(A\inOb\Tri\) is part
of an exact triangle \(P\to A\to N\to P[1]\) with
\(P\inOb\gen{\Proj_\Ideal}\) and \(N\inOb\Null_\Ideal\).
Equivalently, the subcategory~\(\Null_\Ideal\) is
\emph{reflective}, that is, the embedding
\(\Null_\Ideal\to\Tri\) has a right adjoint functor.

Our proof also provides the following structural information on
the category \(\gen{\Proj_\Ideal}\).  First, we get an
increasing chain \((\Proj_\Ideal^n)_{n\in\N}\) of
subcategories, consisting of the projective objects for the
ideals~\(\Ideal^n\); these can also be generated iteratively
from~\(\Proj_\Ideal\) using exact triangles.  We show that any
object of \(\gen{\Proj_\Ideal}\) is a homotopy colimit of an
inductive system~\(P_n\) with \(P_n\inOb\Proj_\Ideal^n\).

\begin{notation}
  \label{note:inOb}
  We write \(f\in\Cat\) for a morphism and \(A\inOb\Cat\) for
  an object of a category~\(\Cat\).  We denote the category of
  Abelian groups by~\(\Ab\).  We usually write~\(\Tri\) for
  triangulated, \(\Abel\) for Abelian, and~\(\Addi\) for
  additive categories.  The translation automorphism in a
  triangulated category is denoted by \(A\mapsto A[1]\).
\end{notation}

\section{Homological ideals, powers, and filtrations}
\label{sec:powers_filtrations}

The convergence of a spectral sequence always involves a
filtration on the limit group.  Hence we expect a homological
ideal~\(\Ideal\) in~\(\Tri\) to generate filtrations on the
category~\(\Tri\) itself and on homological and cohomological
functors on~\(\Tri\).  After recalling some basic notions, we
introduce these filtrations here.

We will use the results and the notation
of~\cite{Meyer-Nest:Homology_in_KK}.  In particular, a
\emph{stable} category is a category with a \emph{translation}
automorphism, denoted \(A\mapsto A[1]\), and a \emph{stable}
functor is a functor~\(F\) together with natural isomorphisms
\(F(A[1]) \cong (FA)[1]\) for all objects~\(A\).

Let \(F\colon \Tri\to\Abel\) be a stable homological functor
from a triangulated category~\(\Tri\) to a stable Abelian
category~\(\Abel\).  We define an ideal \(\ker F\) in~\(\Tri\)
by
\[
\ker F(A,B) \defeq \{\varphi\in\Tri(A,B) \mid F(\varphi)=0\}.
\]
Ideals of this form are called \emph{homological ideals}.  A
homological ideal is used in~\cite{Meyer-Nest:Homology_in_KK}
to carry over various notions from Abelian to triangulated
categories.  This includes \(\Ideal\)\nb-epimorphisms,
\(\Ideal\)\nb-exact chain complexes, \(\Ideal\)\nb-exact
functors, \(\Ideal\)\nb-projective objects, and
\(\Ideal\)\nb-projective resolutions.  The first three of these
can be tested using the functor~\(F\); for instance, a chain
complex with entries in~\(\Tri\) is \(\Ideal\)\nb-exact if and
only if its \(F\)\nb-image is an exact chain complex in the
Abelian category~\(\Abel\).  Projective objects can only be
described in terms of~\(F\) if~\(F\) is the \emph{universal}
\(\Ideal\)\nb-exact stable homological functor,
see~\cite{Meyer-Nest:Homology_in_KK}.  We also call a morphism
an \emph{\(\Ideal\)\nb-phantom map} if it belongs
to~\(\Ideal\).

Most of our constructions require~\(\Tri\) to contain
\emph{enough \(\Ideal\)\nb-projective objects} -- that is, any
object should be the range of an \(\Ideal\)\nb-epimorphism with
\(\Ideal\)\nb-projective domain.  This is equivalent to the
existence of \(\Ideal\)\nb-projective resolutions for all
objects.

\begin{remark}
  \label{rem:Christensen_projective_class}
  Daniel Christensen uses a somewhat different terminology
  in~\cite{Christensen:Ideals}.  His \emph{projective classes}
  \((\Ideal,\Proj)\) turn out to be the same as a homological
  ideal~\(\Ideal\) with enough projective objects together with
  its class \(\Proj=\Proj_\Ideal\) of projective objects.  The
  ideal~\(\Ideal\) in a projective class is homological
  because, in the presence of enough projective objects, the
  universal homological functor with kernel~\(\Ideal\) is
  well-defined.  There are two ways to construct this universal
  functor, which involve a localisation of categories in one
  step.  Apostolos Beligiannis~\cite{Beligiannis:Relative}
  first embeds the category~\(\Tri\) into an Abelian category
  and then localises the latter at a Serre subcategory.  The
  authors use the heart of a t\nb-structure on a suitable
  derived category of chain complexes over~\(\Tri\) in
  \cite{Meyer-Nest:Homology_in_KK}*{\S3.2.1}.  In both cases,
  the morphisms in the relevant localisations can be computed
  using projective resolutions, so that the localisation is
  again a category with morphism \emph{sets} instead of
  morphism classes.
\end{remark}

\subsection{Powers and intersections of ideals}
\label{sec:powers_intersection_ideal}

At first, we do not care whether the ideals we are dealing with
are homological.  Let~\(\Addi\) be an additive category.  If
\((\Ideal_\alpha)_{\alpha\in S}\) is a set of ideals, then the
intersection \(\bigcap \Ideal_\alpha\) is again an ideal.  If
\(\Ideal_1,\Ideal_2\subseteq\Addi\) are ideals, define
\[
\Ideal_1\circ\Ideal_2(A,B)
\defeq \{f_1\circ f_2\mid
\text{\(f_1\in\Ideal_1(X,B)\), \(f_2\in\Ideal_2(A,X)\)
  for some \(X\inOb \Addi\)}\}.
\]
This is a subgroup of \(\Addi(A,B)\) because we can decompose
\(f_1\circ f_2+f_1'\circ f_2'\) as
\[
\xymatrix@C+3mm{
  A \ar[r]^-
  {\Bigl(\begin{smallmatrix}f_2\\f_2'\end{smallmatrix}\Bigr)}&
  X\oplus X'
  \ar[r]^-{(\begin{smallmatrix}f_1&f_1'\end{smallmatrix})}&
  B.
}
\]
Thus \(\Ideal_1\circ\Ideal_2\) is an ideal in~\(\Addi\).  We
have \(\Ideal_1\circ\Ideal_2\subseteq \Ideal_1\cap\Ideal_2\).

Now the \emph{powers} of an ideal \(\Ideal\subseteq\Addi\) are
defined recursively: we let \(\Ideal^0\defeq\Addi\) consist of
all morphisms and define \(\Ideal^n \defeq
\Ideal^{n-1}\circ\Ideal\) for \(1\le n<\infty\).  The sequence
of ideals \((\Ideal^n)_{n\in\N}\) is decreasing, and we have
\(\Ideal^m\circ\Ideal^n=\Ideal^{m+n}\) for all \(m,n\in\N\).
If \(\Ideal^n=\Ideal^{n+1}\) for some \(n\in\N\), then
\(\Ideal^n=\Ideal^N\) for all \(N\ge n\).

We also let \(\Ideal^\infty\defeq \bigcap_{n\in\N} \Ideal^n\)
and \(\Ideal^{n\infty} \defeq (\Ideal^\infty)^n\).  In general,
the ideals \(\Ideal\circ\Ideal^\infty\) and
\(\Ideal^\infty\circ\Ideal\) may differ from~\(\Ideal^\infty\)
(see the remark after Proposition~\ref{pro:ABC_spese_dual}).
Theorem~\ref{the:cellular_limit_tower} shows that
\(\Ideal^{n\infty} = \Ideal^{2\infty}\) for all \(n\ge2\)
if~\(\Ideal\) is compatible with countable direct sums.

Now we replace the additive category~\(\Addi\) by a
triangulated category~\(\Tri\) and restrict attention to
homological ideals.  It is not obvious whether the powers of a
homological ideal are again homological.  If \(\Ideal = \ker
F\), then a functor with kernel \(\Ideal^2\subset\Ideal\)
contains more information than~\(F\) because it has a smaller
kernel.  Therefore, we cannot hope to construct such a functor
out of~\(F\).

Nevertheless, I expect that products and intersections of
homological ideals are again homological, at least if the
categories in question are small to rule out set theoretic
difficulties with localisation of categories.  A proof could
use Beligiannis' axiomatic characterisation of homological
ideals.  Since we only need the much easier case where there
are enough projective objects, I have not completed the
argument.  Proposition~2.5 and Theorem~3.1
in~\cite{Beligiannis:Relative} show that our ``homological
ideals'' are exactly the ``saturated \(\Sigma\)\nb-stable
ideals'' in Beligiannis' notation.  Clearly, products and
intersections of \(\Sigma\)\nb-stable ideals remain
\(\Sigma\)\nb-stable, and intersections of saturated ideals
remain saturated.  It is less clear whether products of
saturated ideals remain saturated; the proof should involve the
octahedral axiom.

Here we only consider the easy case of ideals with enough
projective objects, where we can describe which objects are
projective for products and intersections:

\begin{proposition}[\cite{Christensen:Ideals}*{Proposition 3.3}]
  \label{pro:product_class}
  Let \(\Ideal_1\) and~\(\Ideal_2\) be homological ideals
  in~\(\Tri\) with enough projective objects.  Then
  \(\Ideal_1\circ\Ideal_2\) is a homological ideal with enough
  projective objects.  An object~\(A\) of~\(\Tri\) is
  \(\Ideal_1\circ\Ideal_2\)-projective if and only if there are
  \(\Ideal_j\)\nb-projective objects~\(P_j\) and an exact
  triangle \(P_2\to P\to P_1\to P_2[1]\), such that~\(A\) is a
  direct summand of~\(P\).
\end{proposition}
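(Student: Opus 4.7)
Write $\Proj$ for the class of objects that are direct summands of some $P$ fitting in an exact triangle $P_2 \to P \to P_1 \to P_2[1]$ with $P_j$ being $\Ideal_j$-projective. The plan is to show that $\Proj$ is the class of $\Ideal_1 \circ \Ideal_2$-projective objects, that there are enough of them, and then to invoke Remark~\ref{rem:Christensen_projective_class} to conclude that $\Ideal_1 \circ \Ideal_2$ is homological.

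First I would check that every $P \in \Proj$ is $\Ideal_1 \circ \Ideal_2$-projective. Given $f_1 f_2 \in \Ideal_1 \circ \Ideal_2$ with $f_2 \in \Ideal_2(A, X)$ and $f_1 \in \Ideal_1(X, B)$, and a test morphism $g \colon P \to A$, the composite $f_2 g \colon P \to X$ vanishes on $P_2$ because $P_2$ is $\Ideal_2$-projective. The long exact sequence of $\Tri(-, X)$ applied to $P_2 \to P \to P_1 \to P_2[1]$ then factors $f_2 g$ as $P \to P_1 \to X$, and post-composing with $f_1$ kills it because $P_1$ is $\Ideal_1$-projective. Closure under direct summands is automatic.

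The main work is the construction, for every $A$, of a $\Proj$-resolution. Start by picking an $\Ideal_2$-epimorphism $\pi_2 \colon Q_2 \to A$ with $Q_2$ being $\Ideal_2$-projective and form the triangle $N \to Q_2 \to A \xrightarrow{\delta_2} N[1]$; as in~\cite{Meyer-Nest:Homology_in_KK}, $\delta_2 \in \Ideal_2$ because $\pi_2$ is $\Ideal_2$-epi. Then pick an $\Ideal_1$-epimorphism $\pi_1 \colon Q_1 \to N$ with $Q_1$ being $\Ideal_1$-projective and form $Q_1 \to N \to M \to Q_1[1]$, so that $N \to M$ lies in $\Ideal_1$. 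Applying the octahedral axiom to the composition $Q_1 \to N \to Q_2$ produces an object $P$ sitting in two exact triangles $Q_1 \to Q_2 \to P \to Q_1[1]$ and $M \to P \to A \xrightarrow{\delta} M[1]$, where $\delta$ is visibly the composite $A \xrightarrow{\delta_2} N[1] \to M[1]$ and therefore belongs to $\Ideal_1 \circ \Ideal_2$. Rotating the first triangle yields $Q_2 \to P \to Q_1[1] \to Q_2[1]$, which is of the required form with $P_2 = Q_2$ and $P_1 = Q_1[1]$ (the latter being $\Ideal_1$-projective since the projective class is stable under translation); in particular $P \in \Proj$.

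The remainder is routine. A morphism $f \colon A \to B$ with $\Tri(Q, f) = 0$ for all $Q \in \Proj$ satisfies $f \circ (P \to A) = 0$, hence factors as $f = h \delta$ for some $h \colon M[1] \to B$, and therefore lies in $\Ideal_1 \circ \Ideal_2$. Combined with the first step this gives $\Ideal_1 \circ \Ideal_2 = \{f \mid \Tri(Q, f) = 0 \text{ for all } Q \in \Proj\}$ and shows that the morphisms $P \to A$ from the previous step are $\Ideal_1 \circ \Ideal_2$-epimorphisms; the standard retract argument then yields $\Proj_{\Ideal_1 \circ \Ideal_2} = \Proj$, and Remark~\ref{rem:Christensen_projective_class} supplies the homological property of the ideal. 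The main obstacle is the octahedral construction: the order of the two resolutions is essential for $\delta$ to factor as a morphism in $\Ideal_2$ followed by a morphism in $\Ideal_1$ rather than the other way around, and one must rotate the cone triangle to recognize it in the statement's form.
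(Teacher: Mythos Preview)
Your argument is correct and complete. The paper itself does not prove this proposition: it simply cites Christensen's Proposition~3.3 and appeals to Remark~\ref{rem:Christensen_projective_class} to translate between Christensen's projective-class language and the homological-ideal language used here. What you have written is essentially Christensen's original proof, carried out directly in the present setting; the octahedral construction you describe is exactly the mechanism behind the phantom-castle argument later in the paper (compare the construction of~\eqref{eq:tri_AAP} from the phantom tower), so your approach is fully in line with the paper's methods even though the paper chose to outsource this particular step.
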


\begin{proposition}[see \cite{Christensen:Ideals}*{Proposition
    3.1}]
  \label{pro:intersect_class}
  Let \((\Ideal_\alpha)_{\alpha\in S}\) be a set of homological
  ideals in~\(\Tri\) with enough projective objects.  Suppose
  that~\(\Tri\) has direct sums of cardinality~\(\abs{S}\).
  Then \(\Ideal_S\defeq \bigcap_{\alpha\in S} \Ideal_\alpha\)
  is a homological ideal with enough projective objects.  An
  object~\(A\) of~\(\Tri\) is \(\Ideal_S\)\nb-projective if and
  only if there are \(\Ideal_\alpha\)\nb-projective
  objects~\(P_\alpha\) such that~\(A\) is a direct summand of
  \(\bigoplus_{\alpha\in S} P_\alpha\).
\end{proposition}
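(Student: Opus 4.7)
The plan is to establish, in order, that $\Ideal_S$ is homological, that it has enough projectives via an explicit construction, and finally to deduce the characterization of its projective objects.

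For the first point, I would write each $\Ideal_\alpha = \ker F_\alpha$ for a stable homological functor $F_\alpha\colon \Tri \to \Abel_\alpha$. Since $S$ is a set, the product $\Abel \defeq \prod_{\alpha \in S} \Abel_\alpha$ is a stable Abelian category, and the functor $F = (F_\alpha)_{\alpha \in S}\colon \Tri \to \Abel$ is stable and homological, both properties being tested componentwise. A morphism $\varphi$ satisfies $F(\varphi)=0$ iff $F_\alpha(\varphi)=0$ for every $\alpha$, so $\ker F = \Ideal_S$.

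For enough projectives, fix $A \inOb \Tri$ and, using the enough-projectives hypothesis on each $\Ideal_\alpha$, choose an $\Ideal_\alpha$-epimorphism $g_\alpha\colon P_\alpha \to A$ with $P_\alpha \inOb \Proj_{\Ideal_\alpha}$ for each $\alpha$. Set $P \defeq \bigoplus_{\alpha \in S} P_\alpha$, which exists by assumption, and let $g\colon P \to A$ assemble the $g_\alpha$'s via the coproduct property. Two checks then remain. First, $P$ is $\Ideal_S$-projective: the coproduct formula $\Tri(P, f) = \prod_\alpha \Tri(P_\alpha, f)$ shows that $\Tri(P, -)$ kills any $f \in \Ideal_S \subseteq \Ideal_\alpha$, since each factor is killed by $\Ideal_\alpha$-projectivity of $P_\alpha$. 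Second, $g$ is $\Ideal_S$-epi: complete to an exact triangle $P \xrightarrow{g} A \xrightarrow{p} N \to P[1]$ and show $p \in \Ideal_\alpha$ for every $\alpha$. Fixing $\alpha$ and letting $p_\alpha\colon A \to N_\alpha$ be the cofiber of $g_\alpha$ (which lies in $\Ideal_\alpha$ because $g_\alpha$ is $\Ideal_\alpha$-epi), the coproduct identity $g \circ \iota_\alpha = g_\alpha$ gives $p \circ g_\alpha = 0$, so $p$ factors through the triangle of $g_\alpha$ as $p = q \circ p_\alpha$ for some $q$, and hence $p \in \Ideal_\alpha$ since $\Ideal_\alpha$ is closed under left composition. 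This cofiber argument is the crux of the proof; everything else is formal.

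The characterization then falls out. Direct summands of $\bigoplus_\alpha P_\alpha$ with $P_\alpha$ being $\Ideal_\alpha$-projective are $\Ideal_S$-projective by the projectivity check above combined with the trivial closure of projectivity under summands. Conversely, given an $\Ideal_S$-projective $A$, I would apply the construction to obtain an $\Ideal_S$-epimorphism $g\colon \bigoplus_\alpha P_\alpha \to A$ whose cofiber $p$ is an $\Ideal_S$-phantom; then $\Tri(A, p)=0$ by $\Ideal_S$-projectivity of $A$, and the long exact sequence of $\Tri(A, -)$ on the cofiber triangle forces $\ID_A$ to lift along $g$, giving a section and thereby realizing $A$ as a direct summand of $\bigoplus_\alpha P_\alpha$.
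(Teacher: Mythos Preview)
Your proof is correct and self-contained. The paper does not actually prove this proposition: it cites Christensen's result directly and justifies the citation via Remark~\ref{rem:Christensen_projective_class}, which identifies Christensen's projective classes with homological ideals having enough projectives. Your argument is essentially the one underlying Christensen's Proposition~3.1, translated into the language of homological ideals; the cofiber-factorisation step showing that the assembled map \(g\) is \(\Ideal_S\)-epic is exactly the point of that proof, and your splitting argument for the converse direction is the standard one.
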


We may use Christensen's results because of
Remark~\ref{rem:Christensen_projective_class}.

\begin{definition}
  \label{def:Proj_Ideal}
  Let~\(\Ideal\) be a homological ideal in a triangulated
  category~\(\Tri\) with enough projective objects.  We
  write~\(\Proj_\Ideal\) for the class of
  \(\Ideal\)\nb-projective objects, and~\(\Proj_\Ideal^n\) for
  the class of \(\Ideal^n\)\nb-projective objects for
  \(n\in\N\cup\{\infty\}\).
\end{definition}

The class~\(\Proj_\Ideal\) is always closed under direct
summands, suspensions, and direct sums that exist in~\(\Tri\).

Propositions \ref{pro:product_class}
and~\ref{pro:intersect_class} show that the powers~\(\Ideal^n\)
for \(n\in\N\cup\{\infty\}\) have enough projective objects.
Moreover, \(\Proj_\Ideal^n\) for \(n\in\N\) consists of all
direct summands of objects \(A_n\inOb\Tri\) for which there is
an exact triangle \(A_{n-1}\to A_n\to A_1\to A_{n-1}[1]\) with
\(A_{n-1}\inOb\Proj_\Ideal^{n-1}\), \(A_1\inOb\Proj_\Ideal\);
and \(\Proj_\Ideal^\infty\) consists of all retracts of objects
of the form \(\bigoplus_{n\in\N} A_n\) with
\(A_n\inOb\Proj_\Ideal^n\).  The phantom castle introduced in
Definition~\ref{def:phantom_castle} explicitly decomposes
objects of~\(\Proj_\Ideal^n\) into objects of~\(\Proj_\Ideal\);
essentially, its construction is the proof of
Proposition~\ref{pro:product_class}.

\begin{example}
  \label{exa:idempotent_ideals}
  Let~\(\Ideal\) be a homological ideal with enough projective
  objects.  If \(\Ideal=\Ideal^2\), then
  Proposition~\ref{pro:product_class} implies
  that~\(\Proj_\Ideal\) is closed under extensions.  Since this
  subcategory is always closed under direct summands,
  suspensions, isomorphism, and direct sums, \(\Proj_\Ideal\)
  is a localising subcategory of~\(\Tri\).

  Conversely, if~\(\Proj_\Ideal\) is a triangulated
  subcategory, then \(\Ideal\) and~\(\Ideal^2\) have the same
  projective objects.  Since an ideal with enough projective
  objects is determined by its class of projective objects,
  this implies \(\Ideal=\Ideal^2\).

  Homological ideals with \(\Ideal=\Ideal^2\), but possibly
  without enough projective objects, play an important role
  in~\cite{Krause:Cohomological_quotients} as a substitute for
  localising subcategories.
\end{example}

We usually know very little about the Abelian approximations
generated by~\(\Ideal^n\) for \(n\ge2\), even if the situation
for~\(\Ideal\) itself is rather simple.  Derived functors for
\(\Ideal\) and~\(\Ideal^2\) do not seem closely related.  This
is particularly obvious in cases where \(\Ideal\neq0\) and
\(\Ideal^2=0\).  For instance, this happens if~\(\Ideal\) is
the kernel of the homology functor on the derived category of
the category of Abelian groups.  Here the universal
\(\Ideal\)\nb-exact functor is the homology functor to
\(\Ab^\Z\); the universal \(\Ideal^2\)\nb-exact functor is the
Freyd embedding of the derived category into an Abelian
category.

\subsection{The phantom filtrations}
\label{sec:phantom_filtrations}

Let~\(\Ideal\) be an ideal in an additive category~\(\Addi\).
Since \(\Ideal^\alpha\subseteq\Ideal^\beta\) for
\(\alpha\ge\beta\), we get a decreasing filtration
\[
\Addi(A,B)
= \Ideal^0(A,B)
\supseteq \Ideal^1(A,B)
\supseteq \Ideal^2(A,B)
\supseteq \dotsb
\supseteq \Ideal^\infty(A,B)
\supseteq \{0\},
\]
called the \emph{phantom
  filtration}~\cite{Beligiannis:Relative}.  We shall also need
related filtrations on contravariant and covariant functors
on~\(\Addi\).

Let \(\contra\colon \Addi^\op\to\Ab\) be a \emph{contravariant}
functor and \(A\inOb\Addi\).  We define a decreasing filtration
\[
\contra(A) = \Ideal^0 \contra(A)
\supseteq \Ideal^1 \contra(A)
\supseteq \Ideal^2 \contra(A)
\supseteq \dotsb
\supseteq \Ideal^\infty \contra(A)
\supseteq \{0\}
\]
on \(\contra(A)\) by
\[
\Ideal^\alpha \contra(A)
\defeq \{f^*(\xi) \mid
\text{\(f\in\Ideal^\alpha(A,B)\), \(\xi\in \contra(B)\)
for some \(B\inOb \Addi\)}\}.
\]
If we apply this construction to the representable functor
\(\Addi(\blank,B)\) we get back the filtration
\(\Ideal^\alpha(A,B)\) on \(\Addi(A,B)\).  If~\(\contra\) is
compatible with direct sums,
then~\eqref{eq:contra_infty_intersect} asserts that
\(\bigcap_{n\in\N} \Ideal^n\contra(A) =
\Ideal^\infty\contra(A)\).

The functoriality of~\(\contra\) restricts to maps
\[
\Ideal^\beta(A,B)\otimes \Ideal^\alpha \contra(B) \to
\Ideal^{\alpha+\beta} \contra(A),\qquad
f\otimes x\mapsto f^*(x),
\]
for all \(\alpha,\beta\).  In particular, \(\Ideal^\alpha
\contra\) is a contravariant functor on~\(\Addi\).  The
ideal~\(\Ideal^\beta\) acts trivially on the subquotients
\(\Ideal^\alpha \contra(A)/\Ideal^{\alpha+\beta}
\contra(A)\), which therefore descend to functors on the
quotient category \(\Addi/\Ideal^\beta\).

We may also view~\(\contra\) as a right module over the
category~\(\Addi\) and \(\Addi/\Ideal^\alpha\) as a
\(\Addi\)\nb-bimodule.  The quotient
\(\contra/\Ideal^\alpha\contra\) corresponds to the right
\(\Addi\)\nb-module \(\contra\otimes_\Addi
\Addi/\Ideal^\alpha\).

For a covariant functor \(F\colon \Cat\to\Ab\), we define an
increasing filtration
\[
\{0\} = F:\Ideal^0(A)
\subseteq F:\Ideal^1(A)
\subseteq F:\Ideal^2(A)
\subseteq \dotsb
\subseteq F:\Ideal^\infty(A)
\subseteq F(A)
\]
for any \(A\inOb\Addi\) by
\[
F:\Ideal^\alpha(A)
\defeq \{x\in F(A)\mid \text{\(f_*(x)=0\)
for all \(f\in\Ideal^\alpha(A,B)\), \(B\inOb \Addi\)}\}.
\]
If \(\Ideal\) and~\(F\) are compatible with direct sums, then
\(F:\Ideal^\infty(A) = \bigcup_{n\in\N} F:\Ideal^n(A)\) (see
Theorem~\ref{the:homological_limit}), but this need not be the
case in general.

The functoriality of~\(F\) restricts to maps
\[
\Ideal^\beta(A,B)\otimes F:\Ideal^{\alpha+\beta} (B) \to
F:\Ideal^\alpha (A),\qquad
f\otimes x\mapsto f_*(x),
\]
for all \(\alpha,\beta\).  In particular, \(F:\Ideal^\alpha\)
is a covariant functor on~\(\Addi\).  The
ideal~\(\Ideal^\beta\) acts trivially on the subquotients
\(F:\Ideal^{\alpha+\beta}(A) \bigm/ F:\Ideal^\alpha(A)\), which
therefore descend to functors on \(\Addi/\Ideal^\beta\).

We may also view~\(F\) as a left module over the
category~\(\Addi\) and \(\Addi/\Ideal^\alpha\) as a
\(\Addi\)\nb-bimodule.  Then \(F:\Ideal^\alpha\) corresponds to
the left \(\Addi\)\nb-module
\(\Hom_\Addi(\Addi/\Ideal^\alpha,F)\).

The filtration \(F:\Ideal^\alpha(A)\) is closely related to
projective resolutions of~\(A\).  In contrast, the filtration
\[
\Ideal^\alpha F(A) \defeq \{f_*(\xi)\in F(A)\mid
\text{\(f\in\Ideal^\alpha(B,A)\), \(\xi\in F(B)\)
for some \(B\inOb \Addi\)}\}
\]
is related to \emph{injective} (co)resolutions.  There is also
an increasing filtration \(\contra:\Ideal^n\) for a
contravariant functor~\(\contra\).  The filtrations
\(\Ideal^\alpha F\) and \(\contra:\Ideal^\alpha\) will not be
used in this article.

\section{From projective resolutions to complementary pairs}
\label{sec:pro_res_complementary}

First we refine a projective resolution by adjoining certain
phantom maps.  This yields the \emph{phantom tower} over an
object (see also~\cite{Beligiannis:Relative}).  We show that
the projective resolution determines this tower uniquely up to
non-canonical isomorphism.  There is another tower over an
object, the \emph{cellular approximation tower}.  These two
towers are related by various commuting diagrams and exact
triangles; we call the collection of all these exact or
commuting triangles the \emph{phantom castle}.

The goal of this section is to show that the categories
\(\gen{\Proj_\Ideal}\) and~\(\Null_\Ideal\) are complementary
if~\(\Ideal\) is compatible with direct sums.  Before we come
to that, we recall the notion of complementary pair of
subcategories and define what it means for an ideal to be
compatible with countable direct sums.  The main ingredients in
the proof are the homotopy colimits of the phantom tower and
the cellular approximation tower.  Finally, we describe a
method for checking that a given localising subcategory is
reflective, that is, part of a complementary pair.

All results involving infinite direct sums require that the
category~\(\Tri\) has countable direct sums.  Triangulated
categories involving bivariant \(\K\)\nb-theory have no more
than countable direct sums because of built-in separability
assumptions that make the analysis behind the scenes work.
This is why we only use \emph{countable} direct sums.  Of
course, everything remains true if we drop the word
``countable'' or replace it by another cardinality constraint.

A triangulated subcategory of~\(\Tri\) is called
\emph{localising} (more precisely,
\emph{\(\aleph_0\)\nb-localising}) if it is closed under
countable direct sums.  Localising subcategories are
automatically thick, that is, closed under direct summands
(see~\cite{Neeman:Triangulated}).

Let~\(\Ideal\) be a homological ideal in a triangulated
category~\(\Tri\).  Recall that an
\emph{\(\Ideal\)\nb-projective resolution} of an object~\(A\)
of~\(\Tri\) is a chain complex
\[
\dotsb \xrightarrow{\delta_{n+1}}
P_n \xrightarrow{\delta_n}
P_{n-1} \xrightarrow{\delta_{n-1}}
\dotsb \xrightarrow{\delta_2}
P_1 \xrightarrow{\delta_1}
P_0
\]
of \(\Ideal\)\nb-projective objects~\(P_n\), augmented by a map
\(\pi_0\colon P_0\to A\), such that the augmented chain complex
is \(\Ideal\)\nb-exact.  If \(\Ideal=\ker F\) for a stable
homological functor~\(F\) to some Abelian category~\(\Abel\),
then \emph{\(\Ideal\)\nb-exactness} means that the chain
complex
\[
\dotsb \xrightarrow{F(\delta_{n+1})}
F(P_n) \xrightarrow{F(\delta_n)}
F(P_{n-1}) \xrightarrow{F(\delta_{n-1})}
\dotsb \xrightarrow{F(\delta_1)}
F(P_0) \xrightarrow{F(\pi_0)} F(A)
\]
is exact in~\(\Abel\).  We say that~\(\Ideal\) has \emph{enough
  projective objects} if each \(A\inOb\Tri\) has such an
\(\Ideal\)\nb-projective resolution.

\subsection{The phantom tower}
\label{sec:phantom_tower}

\begin{definition}
  \label{def:phantom_tower}
  A \emph{phantom tower} over an object~\(A\) of~\(\Tri\) is a
  diagram
  \begin{equation}
    \label{eq:phantom_tower}
    \begin{gathered}
      \xymatrix@C=1em{
        A \ar@{=}[r]&
        N_0 \ar[rr]^{\iota_0^1}&&
        N_1 \ar[rr]^{\iota_1^2} \ar[dl]|-\circ^{\epsilon_0}&&
        N_2 \ar[rr]^{\iota_2^3} \ar[dl]|-\circ^{\epsilon_1}&&
        N_3 \ar[rr] \ar[dl]|-\circ^{\epsilon_2}&&
        \cdots \ar[dl]|-\circ\\
        &&P_0 \ar[ul]^{\pi_0}&&
        P_1 \ar[ul]^{\pi_1} \ar[ll]|-\circ^{\delta_1}&&
        P_2 \ar[ul]^{\pi_2} \ar[ll]|-\circ^{\delta_2}&&
        P_3 \ar[ul]^{\pi_3} \ar[ll]|-\circ^{\delta_3}&&
        \cdots \ar[ul] \ar[ll]|-\circ
      }
    \end{gathered}
  \end{equation}
  with \(\Ideal\)\nb-phantom maps~\(\iota_n^{n+1}\) and
  \(\Ideal\)\nb-projective objects~\(P_n\) for \(n\in\N\), such
  that the triangles
  \[
  P_n \xrightarrow{\pi_n}
  N_n \xrightarrow{\iota_n^{n+1}}
  N_{n+1} \xrightarrow{\epsilon_n}
  P_n[1]
  \]
  in~\eqref{eq:phantom_tower} are exact for all \(n\in\N\) and
  the other triangles in~\eqref{eq:phantom_tower} commute, that
  is, \(\delta_{n+1} = \epsilon_n\circ\pi_{n+1}\) for all
  \(n\in\N\).  Notice our convention that circled arrows are
  maps of degree~\(1\).
\end{definition}

Since the maps~\(\delta_n\) in the phantom tower have
degree~\(1\), we slightly modify our notion of projective
resolution, letting the boundary maps have degree~\(1\).

\begin{lemma}
  \label{lem:phantom_tower}
  The maps~\(\delta_n\) for \(n\in\N_{\ge1}\) and~\(\pi_0\) in
  a phantom tower over~\(A\) form an \(\Ideal\)\nb-projective
  resolution of~\(A\).  Conversely, any
  \(\Ideal\)\nb-projective resolution can be embedded in a
  phantom tower, which is unique up to non-canonical
  isomorphism.

  A morphism \(f\colon A\to A'\) lifts
  \textup{(}non-canonically\textup{)} to a morphism between two
  given phantom towers over \(A\) and~\(A'\).  A chain map
  between projective resolutions of \(A\) and~\(A'\) extends to
  the phantom towers that contain these resolutions.
\end{lemma}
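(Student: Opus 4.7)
The plan is to prove all four assertions by the same inductive technique, working up the tower one step at a time.  Two tools are used repeatedly: the axiom TR3 (completing a commutative square between two exact triangles to a morphism of exact triangles) and the observation that any \(\Ideal\)\nb-phantom morphism whose domain is \(\Ideal\)\nb-projective vanishes in~\(\Tri\) (such a morphism~\(\varphi\) factors as \(\varphi\circ\ID_P\), and \(\Tri(P,\varphi)=0\) by projectivity).

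For the first assertion, I apply~\(F\) to each exact triangle \(P_n\to N_n\to N_{n+1}\to P_n[1]\) of the tower.  Since \(F(\iota_n^{n+1})=0\), the associated long exact sequence splits into short exact sequences: \(F(\pi_n)\) is surjective, \(F(\epsilon_n)\) is injective, and \(\operatorname{im} F(\epsilon_n)=\ker F(\pi_n)[1]\).  The composition \(\delta_n[1]\circ\delta_{n+1}=\epsilon_{n-1}[1]\pi_n[1]\epsilon_n\pi_{n+1}\) vanishes because \(\pi_n[1]\epsilon_n=0\) is a rotated segment of an exact triangle, so the augmented sequence is a chain complex.  Exactness at~\(F(P_n)\) follows from the chain of equalities \(\ker F(\delta_n)=\ker F(\pi_n)=\operatorname{im} F(\epsilon_n)[-1]=\operatorname{im} F(\delta_{n+1})[-1]\), where the last step uses surjectivity of~\(F(\pi_{n+1})\); exactness at~\(F(A)\) is surjectivity of~\(F(\pi_0)\).

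For the converse, I build the tower inductively, starting with \(N_0=A\) and the given~\(\pi_0\).  At each stage I cone \(\pi_n\colon P_n\to N_n\) to obtain \(N_{n+1}\), \(\iota_n^{n+1}\) and~\(\epsilon_n\); since \(\pi_n\) is an \(\Ideal\)\nb-epimorphism (verified inductively), \(F(\iota_n^{n+1})=0\), so \(\iota_n^{n+1}\) is \(\Ideal\)\nb-phantom.  To construct~\(\pi_{n+1}\) I must produce a factorisation \(\delta_{n+1}=\epsilon_n\circ\pi_{n+1}\), and by the rotated exact triangle this amounts to showing \(\pi_n[1]\circ\delta_{n+1}=0\).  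This is the central technical step: starting from \(F(\delta_n)[1]F(\delta_{n+1})=0\) (exactness of the resolution) and using injectivity of~\(F(\epsilon_{n-1})\), I conclude \(F(\pi_n)[1]F(\delta_{n+1})=0\); so \(\pi_n[1]\delta_{n+1}\) is an \(\Ideal\)\nb-phantom out of the \(\Ideal\)\nb-projective object~\(P_{n+1}\), and therefore vanishes.  (The base case \(n=0\) uses \(F(\pi_0)[1]F(\delta_1)=0\) directly from exactness of the resolution.)  Finally, \(\pi_{n+1}\) is an \(\Ideal\)\nb-epimorphism because \(\operatorname{im} F(\epsilon_n\pi_{n+1})=\operatorname{im} F(\delta_{n+1})=\operatorname{im} F(\epsilon_n)\) and \(F(\epsilon_n)\) is injective.

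The lifting and uniqueness statements share a common inductive pattern.  Given \(f\colon A\to A'\) and phantom towers over \(A\) and~\(A'\) (the latter written with primes), I set \(f_0=f\) and construct \(g_n\colon P_n\to P_n'\) and \(f_n\colon N_n\to N_n'\) inductively.  For a chain map between resolutions the~\(g_n\) are supplied; otherwise I lift \(f_n\circ\pi_n\) along the \(\Ideal\)\nb-epimorphism~\(\pi_n'\) using \(\Ideal\)\nb-projectivity of~\(P_n\).  The required commutativity \(\pi_n'g_n=f_n\pi_n\) holds in the former case because the two sides agree after composition with~\(\epsilon_{n-1}'\) (by the chain map condition together with the identity \(\epsilon_{n-1}'f_n=g_{n-1}[1]\epsilon_{n-1}\) from the previous step), so the difference factors through~\(\ker\epsilon_{n-1}'\), i.e., as an \(\Ideal\)\nb-phantom out of~\(P_n\), and vanishes.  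TR3 extends \((g_n,f_n)\) to the next stage, yielding~\(f_{n+1}\).  Uniqueness of a phantom tower over a given resolution is the special case \(A=A'\), \(f=\ID\), \(g_n=\ID\); the triangulated five-lemma (two out of three maps in a morphism of exact triangles being isomorphisms) forces every~\(f_n\) to be an isomorphism.  The main obstacle is the recurring obstruction-vanishing typified by \(\pi_n[1]\delta_{n+1}=0\); in every instance the mechanism is identical---spot the obstruction as an \(\Ideal\)\nb-phantom out of an \(\Ideal\)\nb-projective.
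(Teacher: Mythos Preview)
Your proof is correct and follows essentially the same strategy as the paper's: both arguments apply a defining functor~\(F\) to extract short exact sequences from the exact triangles, use the vanishing of \(\Ideal\)\nb-phantoms with \(\Ideal\)\nb-projective domain to factor~\(\delta_{n+1}\) through~\(\epsilon_n\), and invoke TR3 together with the \(\Ideal\)\nb-monicness of~\(\epsilon_{n-1}'\) to handle the lifting step. The only organisational difference is that the paper first lifts~\(f\) to a chain map between the resolutions (citing the comparison lemma from~\cite{Meyer-Nest:Homology_in_KK}) and then extends to the~\(N_n\), whereas you build the maps~\(g_n\) and~\(f_n\) simultaneously; your version is slightly more self-contained, the paper's slightly more modular.
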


\begin{proof}
  Let \(P_n\), \(\pi_0\), and~\(\delta_n\) be part of a phantom
  tower over~\(A\).  The objects~\(P_n\) are
  \(\Ideal\)\nb-projective by definition, and
  \(\delta_n\circ\delta_{n+1}=0\) for all \(n\in\N\) and
  \(\pi_0\circ\delta_1=0\) because these products involve two
  consecutive arrows in an exact triangle.  Hence the maps
  \(\delta_n\) and~\(\pi_0\) form a chain complex.  We claim
  that it is \(\Ideal\)\nb-exact.

  Let~\(F\) be a stable homological functor with \(\ker
  F=\Ideal\).  Recall that a chain complex is
  \(\Ideal\)\nb-exact if and only if its \(F\)\nb-image is
  exact in the usual sense by
  \cite{Meyer-Nest:Homology_in_KK}*{Lemma 3.9}.  The exact
  triangles in the phantom tower yield short exact sequences
  \[
  F_{*+1}(N_{n+1}) \into F_*(P_n) \prto F_*(N_n)
  \]
  for all \(n\in\N\) because \(\iota_n^{n+1}\in\Ideal\); here
  \(F_*(A)\defeq F(A[-n])\).  Splicing these extensions as in
  the definition of the Yoneda product, we get an exact chain
  complex.  Since this chain complex is
  \[
  \dotsb \to
  F_{*+2}(P_2) \to
  F_{*+1}(P_1) \to
  F_{*}(P_0) \to
  F_*(A) \to
  0,
  \]
  we have got an \(\Ideal\)\nb-exact chain complex and hence an
  \(\Ideal\)\nb-projective resolution.

  Now let \(\pi_0\colon P_0\to A\) and \(\delta_n\colon P_n\to
  P_{n-1}[1]\) for \(n\in\N_{\ge1}\) form an
  \(\Ideal\)\nb-projective resolution.  We recursively
  construct the triangles that comprise the phantom tower.  To
  begin with, we embed~\(\pi_0\) in an exact triangle
  \[
  P_0 \xrightarrow{\pi_0} A \xrightarrow{\iota_0^1} N_1
  \xrightarrow{\epsilon_0} P_0[1].
  \]
  Since~\(\pi_0\) is \(\Ideal\)\nb-epic, \(\iota_0^1\) is an
  \(\Ideal\)\nb-phantom map and~\(\epsilon_0\) is
  \(\Ideal\)\nb-monic.  Thus our exact triangle yields a short
  exact sequence
  \[
  \Tri_{*+1}(P,N_1) \into \Tri_*(P,P_0) \prto \Tri_*(P,A)
  \]
  for any \(\Ideal\)\nb-projective object~\(P\).  In
  particular, this applies to \(P=P_1\) and shows
  that~\(\delta_1\) factors uniquely as
  \(\delta_1=\epsilon_0\circ\pi_1\) with
  \(\pi_1\in\Tri_0(P_1,N_1)\).

  We claim that~\(\pi_1\) is \(\Ideal\)\nb-epic.  Let~\(F\) be
  a defining functor for~\(\Ideal\) as above.  Then \(F(P_1)\to
  F(P_0)\to F(A)\) is exact at \(F(P_0)\), and \(F(N_1)\into
  F(P_0)\prto F(A)\) is a short exact sequence.  Hence the
  range of \(F(\delta_1)\) is isomorphic to \(F(N_1)\).  This
  implies that \(F(\pi_1)\) is an epimorphism, that is,
  \(\pi_1\) is \(\Ideal\)\nb-epic.

  Thus the maps~\(\pi_1\) and \(\delta_n\) for
  \(n\in\N_{\ge2}\) form an \(\Ideal\)\nb-projective resolution
  of~\(N_1\).  We may now repeat the above process and
  recursively construct the phantom tower.  Thus any
  \(\Ideal\)\nb-projective resolution embeds in a phantom
  tower.  Furthermore, since the exact triangle containing a
  given morphism is unique up to isomorphism and the
  liftings~\(\pi_1\) above are unique, there is, up to
  isomorphism, only one phantom tower that contains a given
  \(\Ideal\)\nb-projective resolution.  Of course, different
  resolutions yield different phantom towers.

  Finally, it remains to lift a morphism \(f\colon A\to A'\) to
  a transformation between two given phantom towers.  First we
  can lift~\(f\) to a chain map between the
  \(\Ideal\)\nb-projective resolutions contained in these
  towers (see~\cite{Meyer-Nest:Homology_in_KK}); let
  \(P_n(f)\colon P_n\to P_n'\) for \(n\in\N\) be this chain
  map.  It remains to construct maps \(N_n(f)\colon N_n\to
  N_n'\) that together with the maps \(P_n(f)\) intertwine the
  various maps in the phantom towers.  We already have the map
  \(N_0(f)=f\).  The triangulated category axioms provide a map
  \(N_1(f)\colon N_1\to N_1'\) making the diagram
  \[\xymatrix{
    P_0\ar[d]^{P_0(f)}\ar[r]^{\pi_0}&
    A\ar[d]^{f}\ar[r]^{\iota_0^1}&
    N_1\ar[d]^{N_1(f)}\ar[r]^{\epsilon_0}&
    P_0[1]\ar[d]^{P_0(f)[1]}\\
    P_0'\ar[r]^{\pi'_0}&
    A'\ar[r]^{{\iota'}_0^1}&
    N_1'\ar[r]^{\epsilon'_0}&
    P_0'[1]
  }
  \]
  commute.  We claim that \(N_1(f)\circ\pi_1 = \pi'_1\circ
  P_1(f)\).  As above, we get short exact sequences
  \[
  \Tri_{*+1}(P_1,N_1') \into \Tri_*(P_1,P_0')
  \prto \Tri_*(P_1,A').
  \]
  Hence it suffices to check \(\epsilon'_0 \circ
  N_1(f)\circ\pi_1 = \epsilon'_0\circ \pi'_1\circ P_1(f) =
  \delta_1'\circ P_1(f)\).  But this is true because
  \(\epsilon'_0\circ N_1(f) = P_0(f)\circ\epsilon_0\) and the
  maps \(P_n(f)\) form a chain map.  Thus the map \(N_1(f)\)
  has all required properties.  Iterating this construction, we
  get the maps \(N_n(f)\) for all \(n\in\N\).  By the way, they
  need not be unique even if the maps \(P_n(f)\) are fixed.
\end{proof}

The following definition formalises an important property of
the maps~\(\iota_n^{n+1}\) in a phantom tower.

\begin{definition}
  \label{def:ideal_versal}
  Let \(\Ideal\subseteq\Tri\) be an ideal.  Let
  \(A,B\inOb\Tri\).  We call \(f\in\Ideal(A,B)\)
  \emph{\(\Ideal\)\nb-versal} if, for any \(C\inOb\Tri\), any
  \(g\in\Ideal(A,C)\) factors as \(g=h\circ f\) for some
  \(h\in\Tri(B,C)\):
  \[\xymatrix{
    A \ar[r]^{f} \ar[dr]_{g} & B \ar@{.>}[d]^{h}_{\exists} \\ & C
  }
  \]
  We do \emph{not} require this factorisation to be unique.
\end{definition}

Since~\(\Ideal\) is an ideal, any map of the form \(h\circ f\)
belongs to~\(\Ideal\).

\begin{lemma}
  \label{lem:versal_resolution}
  The maps~\(\iota_n^{n+1}\) in a phantom tower are
  \(\Ideal\)\nb-versal for all \(n\in\N\).
\end{lemma}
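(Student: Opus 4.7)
The plan is to use the defining exact triangle $P_n \xrightarrow{\pi_n} N_n \xrightarrow{\iota_n^{n+1}} N_{n+1} \xrightarrow{\epsilon_n} P_n[1]$ from the phantom tower together with the defining property of $\Ideal$\nb-projective objects.

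Fix $n\in\N$ and an object $C\inOb\Tri$, and let $g\in\Ideal(N_n,C)$. Applying the cohomological functor $\Tri(\blank,C)$ to the exact triangle above yields the long exact sequence
\[
\Tri(P_n[1],C) \xrightarrow{\epsilon_n^*} \Tri(N_{n+1},C)
\xrightarrow{(\iota_n^{n+1})^*} \Tri(N_n,C)
\xrightarrow{\pi_n^*} \Tri(P_n,C).
\]
Thus $g$ factors as $g=h\circ\iota_n^{n+1}$ for some $h\in\Tri(N_{n+1},C)$ if and only if $g\circ\pi_n=0$ in $\Tri(P_n,C)$. So the entire task reduces to verifying this vanishing.

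Here is where $\Ideal$\nb-projectivity enters. Since~$\Ideal$ is an ideal and $g\in\Ideal$, the composite $g\circ\pi_n$ lies in $\Ideal(P_n,C)$. The lifting property defining an $\Ideal$\nb-projective object~$P$ is equivalent to the statement that $\Ideal(P,X)=0$ for every $X\inOb\Tri$: indeed, any $\Ideal$\nb-phantom $\varphi\colon P\to X$ factors as $\varphi=\varphi\circ\ID_P$, and the lifting property forces $\varphi_*\colon \Tri(P,P)\to\Tri(P,X)$ to be zero, so $\varphi=0$. Applying this to the $\Ideal$\nb-projective $P_n$ gives $g\circ\pi_n=0$, as required.

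Because the equivalence between the lifting definition of projectivity and the vanishing of $\Ideal$\nb-morphisms out of projective objects is explicitly established in~\cite{Meyer-Nest:Homology_in_KK}, there is really no hard step: the proof is essentially one line combining the long exact sequence from the defining triangle with this vanishing. The only thing to be slightly careful about is that we use $\pi_n$ rather than $\iota_n^{n+1}$ itself: it is the composition $g\circ\pi_n$ (not $g\circ\iota_n^{n+1}$) whose vanishing obstructs the lifting, and it is essential that the preceding term in the triangle, namely $P_n$, is $\Ideal$\nb-projective — which is precisely the structural feature of the phantom tower.
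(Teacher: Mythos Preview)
Your proof is correct and follows essentially the same approach as the paper: apply the cohomological functor \(\Tri(\blank,C)\) to the defining exact triangle, observe that \(g\circ\pi_n\in\Ideal(P_n,C)=0\) by \(\Ideal\)\nb-projectivity of~\(P_n\), and conclude that~\(g\) factors through~\(\iota_n^{n+1}\). Your inline justification that \(\Ideal(P,X)=0\) via ``\(\varphi_*\colon\Tri(P,P)\to\Tri(P,X)\) is zero'' is phrased a bit loosely, but since you correctly cite the equivalence from~\cite{Meyer-Nest:Homology_in_KK} this does not affect the validity of the argument.
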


\begin{proof}
  Let \(f\in\Ideal(N_n,B)\).  Since~\(P_n\) is
  \(\Ideal\)\nb-projective, \(\Ideal_*(P_n,B)=0\).  Thus
  \(f\circ\pi_n=0\).  This forces~\(f\) to factor
  through~\(\iota_n^{n+1}\) because \(\Tri_*(\blank,B)\) is
  cohomological.
\end{proof}

\begin{lemma}
  \label{lem:versal_compose}
  Let \(\Ideal_1\) and~\(\Ideal_2\) be ideals in a triangulated
  category.  If \(f_1\in\Ideal_1(B,C)\) and
  \(f_2\in\Ideal_2(A,B)\) are \(\Ideal_1\)- and
  \(\Ideal_2\)\nb-versal maps, respectively, then \(f_1\circ
  f_2\colon A\to C\) is \(\Ideal_1\circ\Ideal_2\)-versal.
\end{lemma}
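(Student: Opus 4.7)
The plan is to unfold both definitions and perform a two-step factorisation. Pick any test morphism $g\in(\Ideal_1\circ\Ideal_2)(A,D)$; by the definition of the composite ideal given in Section~\ref{sec:powers_intersection_ideal}, there exist an object \(X\inOb\Tri\) and morphisms $g_2\in\Ideal_2(A,X)$, $g_1\in\Ideal_1(X,D)$ with $g=g_1\circ g_2$. The aim is to produce a single arrow $h\colon C\to D$ with $g=h\circ(f_1\circ f_2)$.

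First, apply $\Ideal_2$\nb-versality of $f_2$ to the morphism $g_2\in\Ideal_2(A,X)$: this yields a map $h_2\colon B\to X$ with $g_2=h_2\circ f_2$, so that
\[
g \;=\; g_1\circ g_2 \;=\; g_1\circ h_2\circ f_2.
\]
Now observe that $g_1\circ h_2\in\Ideal_1(B,D)$, because $g_1\in\Ideal_1$ and an ideal is closed under composition with arbitrary morphisms on either side. Hence we may apply $\Ideal_1$\nb-versality of $f_1$ to $g_1\circ h_2$, obtaining a map $h_1\colon C\to D$ with $g_1\circ h_2=h_1\circ f_1$. Substituting gives
\[
g \;=\; h_1\circ f_1\circ f_2,
\]
which is the desired factorisation of $g$ through $f_1\circ f_2$. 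Since $g\in(\Ideal_1\circ\Ideal_2)(A,D)$ was arbitrary, $f_1\circ f_2$ is $\Ideal_1\circ\Ideal_2$\nb-versal.

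There is no real obstacle: the argument is a straightforward diagram chase. The only point to flag is that the intermediate composite $g_1\circ h_2$ must be recognised as lying in $\Ideal_1$ (not merely as a morphism in $\Tri$) before one invokes the versality of $f_1$; this is immediate from the one-sided absorption property of an ideal, but it is the step that makes the two uses of versality compose correctly.
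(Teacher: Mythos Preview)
Your proof is correct and follows exactly the same two-step factorisation as the paper: first use \(\Ideal_2\)-versality of \(f_2\) to lift the \(\Ideal_2\)-component, then use the ideal property to recognise the resulting composite as lying in \(\Ideal_1\) and apply \(\Ideal_1\)-versality of \(f_1\). The only difference is cosmetic---the paper packages the argument into a single commuting diagram rather than writing out the equations.
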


\begin{proof}
  Let \(h\in\Ideal_1\circ\Ideal_2(A,D)\), write \(h=h_1\circ
  h_2\) with \(h_1\in\Ideal_1\) and \(h_2\in\Ideal_2\).  Using
  versality of \(f_1\) and~\(f_2\), we find the maps \(h_2'\)
  and~\(h'\) in the following diagram:
  \[\xymatrix{
    A \ar[r]^{f_2} \ar[dr]_{h_2}&
    B \ar[r]^{f_1} \ar@{.>}[d]^{h_2'}_{\exists}&
    C  \ar@{.>}[d]^{h'}_{\exists}\\&
    \bullet \ar[r]_{h_1}&
    D.
  }
  \]
  Thus~\(h\) factors through \(f_1\circ f_2\) as required.
\end{proof}

As a consequence, the maps
\[
\iota_n^{n+k}\defeq
\iota_{n+k-1}^{n+k}\circ\dotsb\circ\iota_n^{n+1}\colon
N_n\to N_{n+k}
\]
in a phantom tower are \(\Ideal^k\)\nb-versal for all
\(n,k\in\N\).

\begin{lemma}
  \label{lem:ideal_versal}
  A map \(f\colon A\to B\) is \(\Ideal^k\)\nb-versal if and
  only if
  \[
  \Ideal^k(A,C) = \range \bigl(f^*\colon
  \Tri(B,C)\to\Tri(A,C)\bigr)
  \]
  for all \(C\inOb\Tri\).

  Let \(f\colon A\to B\) be \(\Ideal^k\)\nb-versal.  If
  \(F\colon \Tri\to\Ab\) is homological, then
  \[
  F:\Ideal^k(A) = \ker \bigl(f_*\colon F(A)\to F(B)\bigr);
  \]
  if \(\contra\colon \Tri^\op\to\Ab\) is cohomological, then
  \[
  \Ideal^k \contra(A)
  = \range\bigl(f^*\colon \contra(B)\to \contra(A)\bigr).
  \]
\end{lemma}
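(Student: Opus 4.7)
The plan is to derive all three assertions directly from the definition of $\Ideal^k$-versality together with the fact that $f \in \Ideal^k$. Two facts suffice: (a)~given $g \in \Ideal^k(A,C)$, versality supplies $h \in \Tri(B,C)$ with $g = h \circ f$, and (b)~since $\Ideal^k$ is an ideal containing~$f$, every composite $h \circ f$ with $h \in \Tri(B,C)$ is itself in $\Ideal^k(A,C)$.

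For the first equivalence, the forward direction is immediate: (a)~gives $\Ideal^k(A,C) \subseteq \range f^*$ and (b)~gives the reverse inclusion. For the converse, assume the range equality holds for every~$C$; specialising to $C = B$ and $\ID_B \in \Tri(B,B)$ shows $f = f^*(\ID_B) \in \Ideal^k$, and the factorisation condition in Definition~\ref{def:ideal_versal} is just the inclusion $\Ideal^k(A,C) \subseteq \range f^*$ rewritten for each~$C$.

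For the covariant identity, if $x \in F:\Ideal^k(A)$ then $f_*(x) = 0$ because $f \in \Ideal^k(A,B)$ is one of the morphisms that must annihilate~$x$. Conversely, if $f_*(x) = 0$, then for any $g \in \Ideal^k(A,C)$ I would use versality to write $g = h \circ f$, so that $g_*(x) = h_*(f_*(x)) = 0$, giving $x \in F:\Ideal^k(A)$. The contravariant identity is parallel and uses the same two facts: $\range f^* \subseteq \Ideal^k\contra(A)$ holds because $f \in \Ideal^k$, and for the reverse inclusion an arbitrary element $g^*(\xi)$ of $\Ideal^k\contra(A)$ rewrites as $f^*(h^*(\xi))$ via the factorisation $g = h \circ f$.

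There is no substantive obstacle here; the lemma is a direct translation of the factorisation formulation of versality into range and kernel conditions, and it will mainly serve as a computational tool later in the paper. The only point to be careful about is that the backward direction of the equivalence implicitly requires $f \in \Ideal^k$, which is why I extract it from the range equality by plugging in $\ID_B$ as above.
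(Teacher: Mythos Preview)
Your proof is correct and is precisely the unpacking of ``follows immediately from the definitions'' that the paper gives as its entire proof. The only minor observation is that neither the homological nor the cohomological hypothesis on~\(F\) and~\(\contra\) is actually used in your argument (nor is it needed), which you may wish to note.
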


\begin{proof}
  This follows immediately from the definitions.
\end{proof}

As a consequence, we can compute the filtrations
\(F:\Ideal^k(A)\) and \(\Ideal^k\contra(A)\) of
Section~\ref{sec:phantom_filtrations} from the phantom tower.

\subsection{The phantom castle}
\label{sec:phantom_castle}

Now we extend the phantom tower to the phantom castle, which
contains among other things the cellular approximation tower.
We start with a phantom tower over some object \(A\inOb\Tri\).
Let
\[
\iota^n \defeq \iota_{n-1}^n \circ \dotsb \circ \iota_1^0\colon
A=N_0\to N_n
\]
and embed~\(\iota^n\) in an exact triangle
\begin{equation}
  \label{eq:tri_AAN}
  \tilde{A}_n \xrightarrow{\alpha_n}
  A \xrightarrow{\iota^n}
  N_n \xrightarrow{\gamma_n}
  \tilde{A}_n[1].
\end{equation}
The octahedral axiom relates the mapping cones
\(\tilde{A}_{n+1}\), \(P_n\), and \(\tilde{A}_n\) of the maps
\(\iota^{n+1}\), \(\iota_n^{n+1}\), and~\(\iota^n\) because
\(\iota^{n+1}=\iota_n^{n+1}\circ\iota^n\) (see
\cite{Neeman:Triangulated}*{Proposition I.4.6} or
\cite{Meyer-Nest:BC}*{Proposition A.1}).  More precisely, the
octahedral axiom allows us to choose maps
\begin{equation}
  \label{eq:tri_AAP}
  \tilde{A}_n \xrightarrow{\alpha_n^{n+1}}
  \tilde{A}_{n+1} \xrightarrow{\sigma_n}
  P_n \xrightarrow{\kappa_n}
  \tilde{A}_n[1],
\end{equation}
such that this triangle is exact and the following diagram
commutes:
\begin{equation}
  \label{eq:octahedral_relations}
  \begin{gathered}
    \xymatrix{
      N_n \ar[r]^{\iota_n^{n+1}} \ar[d]|-\circ_{\gamma_n}&
      N_{n+1} \ar[dr]|-\circ^{\epsilon_n}
      \ar[d]|-\circ_{\gamma_{n+1}}\\
      \tilde{A}_n \ar[r]^{\alpha_n^{n+1}} \ar[dr]_{\alpha_n}&
      \tilde{A}_{n+1} \ar[d]^{\alpha_{n+1}} \ar[r]^{\sigma_n}&
      P_n\ar[d]_{\pi_n}\ar[r]|-\circ^{\kappa_n}&
      \tilde{A}_n\\&
      A\ar[r]^{\iota^n}\ar[dr]_{\iota^{n+1}}&
      N_n\ar[ur]|-\circ_{\gamma_n}\ar[d]^{\iota_n^{n+1}}\\
      &&N_{n+1}
    }
  \end{gathered}
\end{equation}
In addition, we can achieve that the triangle
\begin{equation}
  \label{eq:htpy_pull-back}
  N_n[-1] \to
  \tilde{A}_{n+1} \xrightarrow{\bigl(
    \begin{smallmatrix}\alpha_{n+1}\\\sigma_n\end{smallmatrix}
    \bigr)}
  A\oplus P_n \xrightarrow{(\iota^n,\pi_n)} N_n
\end{equation}
is exact, that is, the square in the middle
of~\eqref{eq:octahedral_relations} is homotopy Cartesian and
the diagonal of the top right square provides its differential.

\begin{lemma}
  \label{lem:approx_tower_projective}
  The object~\(\tilde{A}_n\) is \(\Ideal^n\)\nb-projective for
  each \(n\in\N\).
\end{lemma}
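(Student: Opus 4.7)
The plan is straightforward induction on $n$, building $\tilde{A}_{n+1}$ from $\tilde{A}_n$ via the exact triangle~\eqref{eq:tri_AAP} and invoking Proposition~\ref{pro:product_class}, which gives exactly the recipe ``extension of $\Ideal^n$-projective by $\Ideal$-projective yields $\Ideal^{n+1}$-projective.''

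For the base case, I would take $n=0$: since~$\iota^0$ is the empty composition $A\to N_0=A$, that is, the identity, its mapping cone~$\tilde{A}_0$ is a zero object and hence is $\Ideal^0$-projective trivially. (Alternatively, one may start at $n=1$: rotating the first triangle of the phantom tower \(P_0\xrightarrow{\pi_0} A\xrightarrow{\iota_0^1} N_1\xrightarrow{\epsilon_0} P_0[1]\) and comparing with~\eqref{eq:tri_AAN} for $n=1$ identifies $\tilde{A}_1\cong P_0$, which is $\Ideal=\Ideal^1$-projective by construction of the phantom tower.)

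For the inductive step, assume $\tilde{A}_n$ is $\Ideal^n$-projective. The phantom castle supplies the exact triangle
\[
\tilde{A}_n \xrightarrow{\alpha_n^{n+1}} \tilde{A}_{n+1} \xrightarrow{\sigma_n} P_n \xrightarrow{\kappa_n} \tilde{A}_n[1]
\]
from~\eqref{eq:tri_AAP}, in which~$P_n$ is $\Ideal$-projective by the definition of the phantom tower. Applying Proposition~\ref{pro:product_class} with $\Ideal_1=\Ideal$, $P_1=P_n$, $\Ideal_2=\Ideal^n$, $P_2=\tilde{A}_n$ identifies~$\tilde{A}_{n+1}$ as $\Ideal\circ\Ideal^n = \Ideal^{n+1}$-projective (with no need to pass to a direct summand, since $\tilde{A}_{n+1}$ itself sits in the middle of the triangle). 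This closes the induction.

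There is essentially no obstacle here: the content of the lemma is entirely in the combination of Proposition~\ref{pro:product_class} with the octahedral construction that produces the triangles~\eqref{eq:tri_AAP}. The only thing to check carefully is the orientation of the triangle so that $\tilde{A}_n$ plays the role of~$P_2$ and~$P_n$ the role of~$P_1$, giving the product $\Ideal\circ\Ideal^n$ in the correct order $\Ideal^{n+1}$.
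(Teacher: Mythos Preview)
Your proof is correct and follows exactly the same route as the paper's: induction on~\(n\) with the exact triangle~\eqref{eq:tri_AAP} and Proposition~\ref{pro:product_class} supplying the induction step. The paper's proof is simply a terser version of what you wrote, omitting the explicit identification of which object plays which role in Proposition~\ref{pro:product_class}.
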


\begin{proof}[Proof by induction on~\(n\)] The case \(n=0\) is
  clear.  Since \(P_n\inOb\Proj_\Ideal\) for all \(n\in\N\),
  the exact triangles~\eqref{eq:tri_AAP} and
  Proposition~\ref{pro:product_class} provide the induction
  step.
\end{proof}

Furthermore, the map \(\alpha_n\colon \tilde{A}_n\to A\) is
\(\Ideal^n\)\nb-epic because \(\iota^n\in\Ideal^n\), so that it
is the first step of an \(\Ideal^n\)\nb-projective resolution
of~\(A\).  This provides another explanation why the
map~\(\iota^n\) is \(\Ideal\)\nb-versal (compare
Lemma~\ref{lem:versal_resolution}).

\begin{remark}
  \label{ref:sparse_phantom_tower}
  The cone of the map \(\iota_m^{m+k}\) is
  \(\Ideal^k\)\nb-projective for all \(m,k\in\N\) by a similar
  argument.  Hence
  \[
  A=N_0
  \xrightarrow{\iota_0^k} N_k
  \xrightarrow{\iota_k^{2k}} N_{2k}
  \xrightarrow{\iota_{2k}^{3k}} N_{3k}
  \to \dotsb
  \]
  together with the exact triangles that contain the
  maps~\(\iota_{jk}^{jk+k}\) is an \(\Ideal^k\)\nb-phantom
  tower and hence yields an \(\Ideal^k\)\nb-projective
  resolution by Lemma~\ref{lem:phantom_tower}.

  As a result, an \(\Ideal\)\nb-phantom tower determines
  \(\Ideal^k\)\nb-phantom towers for all \(k\in\N\).
\end{remark}

\begin{definition}
  \label{def:approx_tower}
  The sequence of exact triangles~\eqref{eq:tri_AAP} is called
  the \emph{cellular approximation tower} over~\(A\).
\end{definition}

The motivation for our terminology is the following.  If~\(A\)
has a projective resolution of finite length, then we can
choose a phantom tower with \(P_n=0\) for \(n\gg0\).  Suppose,
in addition, that~\(A\) belongs to the thick triangulated
subcategory generated by~\(\Proj_\Ideal\).  Then the proof of
Proposition~\ref{pro:Ct_for_projective} yields \(N_n=0\) for
\(n\gg0\).  The exact triangles~\eqref{eq:tri_AAN} mean that
the maps \(\alpha_n\colon \tilde{A}_n\to A\) become invertible
for \(n\gg0\), that is, \(\tilde{A}_n\cong A\).  Therefore, we
think of the objects~\(N_n\) as ``obstructions'' that should
get smaller for \(n\to\infty\), and of the
objects~\(\tilde{A}_n\) as better and better approximations
to~\(A\).  They are called ``\(\Proj_\Ideal\)\nb-cellular''
because they are constructed out of \(\Ideal\)\nb-projective
objects -- the cells -- by iterated exact triangles.

\begin{definition}
  \label{def:phantom_castle}
  A \emph{phantom castle} over~\(A\) is a sequence of objects
  \(N_n\), \(P_n\), \(\tilde{A}_n\) with maps
  \(\iota_n^{n+1}\), \(\pi_n\), \(\epsilon_n\), \(\iota^n\),
  \(\alpha_n\), \(\gamma_n\), \(\sigma_n\), \(\kappa_n\) such
  that the triangles \eqref{eq:tri_PNN}, \eqref{eq:tri_AAN},
  \eqref{eq:tri_AAP}, and~\eqref{eq:htpy_pull-back} are exact
  and the diagram~\eqref{eq:octahedral_relations} commutes.
\end{definition}

We will use most of the information encoded in this definition
to identify the spectral sequences generated by the phantom
tower and the cellular approximation tower; only the
commutativity of the square in the middle
of~\eqref{eq:octahedral_relations} and the exact
sequence~\eqref{eq:htpy_pull-back} seem irrelevant in the
following.

\subsection{Complementary pairs of subcategories and
  localisation}
\label{sec:complementary}

We call two thick subcategories \(\Local\) and~\(\Null\)
of~\(\Tri\) \emph{complementary} if \(\Tri_*(L,N)=0\) for all
\(L\inOb\Local\), \(N\inOb\Null\) and, for any \(A\inOb\Tri\),
there is an exact triangle \(L\to A\to N\to L[1]\) with
\(L\inOb\Local\) and \(N\inOb\Null\) (see
\cite{Meyer-Nest:BC}*{Definition 2.8}).  Similar situations
have been studied by various authors, under various names, such
as localisation pairs, stable t\nb-structures, torsion pairs; a
complementary pair is equivalent to a localisation
functor~\(L\) on~\(\Tri\), where~\(\Local\) is the class of
\(L\)\nb-local objects and~\(\Null\) is the class of
\(L\)\nb-acyclic objects.

The following assertions are contained in
\cite{Meyer-Nest:BC}*{Proposition 2.9}.  Let \((\Local,\Null)\)
be complementary.  Then the exact triangle \(L\to A\to N\to
L[1]\) with \(L\inOb\Local\) and \(N\inOb\Null\) is unique and
functorial, and the resulting functors \(L\colon
\Tri\to\Local\) and \(N\colon \Tri\to\Null\) mapping~\(A\) to
\(L\) and~\(N\), respectively, are left adjoint to the
embedding functor \(\Local\to\Tri\) and right adjoint to the
embedding functor \(\Null\to\Tri\), respectively.  That is, the
subcategory~\(\Null\) is reflective and~\(\Local\) is
coreflective.  The composite functors
\(\Local\to\Tri\to\Tri/\Null\) and
\(\Null\to\Tri\to\Tri/\Local\) are equivalences of categories.

Conversely, let \(\Null\subseteq\Tri\) be a reflective
subcategory and let \(N\colon \Tri\to\Null\) be the left
adjoint of the embedding functor \(\Null\to\Tri\).  Let
\[
\Local = \{A\inOb\Tri\mid N(A)=0\}
\]
be the \emph{left orthogonal complement} of~\(\Null\).  Then
\((\Local,\Null)\) is a complementary pair of subcategories,
and~\(\Local\) is the only possible partner for~\(\Null\).
Thus complementary pairs are essentially the same as reflective
subcategories.  Dually, a subcategory~\(\Local\) is
coreflective if and only if it is part of a complementary pair
\((\Local,\Null)\), and the only candidate for~\(\Null\) is the
right orthogonal complement of~\(\Local\).

If \(F\colon \Tri\to\Cat\) is a covariant functor, then its
\emph{localisation} \(\Left F\) with respect to~\(\Null\) is
defined by \(\Left F\defeq F\circ L\), where \(L\colon
\Tri\to\Local\) is the right adjoint of the embedding
\(\Local\to\Tri\).  The natural maps \(L(A)\to A\) provide a
natural transformation \(\Left F\Rightarrow F\).  If
\(\contra\colon \Tri^\op\to\Cat\) is a contravariant functor,
then the localisation \(\contra\circ L\) is denoted by~\(\Right
\contra\).  It comes together with a natural transformation
\(\contra\Rightarrow \Right \contra\).

This localisation process is an important tool to construct
functors.  Special cases are derived functors in homological
algebra and the domain of the Baum--Connes assembly map
(see~\cite{Meyer-Nest:BC}).

Although the definition of a complementary pair is symmetric,
the subcategories \(\Local\) and~\(\Null\) have a rather
different nature in most examples.  Usually, one of them -- here
it is always~\(\Null\) -- is defined directly and the other one
is only described by generators.  This makes it hard to tell
which objects it contains and to find the exact triangles
needed for complementarity.

Here homological ideals help.  Let~\(\Ideal\) be a homological
ideal with enough projective objects in a triangulated
category~\(\Tri\).  Let~\(\gen{\Proj_\Ideal}\) be the
localising subcategory generated by~\(\Proj_\Ideal\), that is,
the smallest localising subcategory of~\(\Tri\) that
contains~\(\Proj_\Ideal\).  Since the name ``projective'' is
already taken, we call objects of~\(\gen{\Proj_\Ideal}\)
\emph{\(\Proj_\Ideal\)\nb-cellular}.  We have
\(\Proj_\Ideal^n\subseteq \gen{\Proj_\Ideal}\) for all
\(n\in\N\cup\{\infty,2\infty,\dotsc\}\) by Propositions
\ref{pro:product_class} and~\ref{pro:intersect_class}.

\begin{definition}
  \label{def:Null_Ideal}
  Let~\(\Null_\Ideal\) be the full subcategory of
  \(\Ideal\)\nb-contractible objects, that is, objects~\(N\)
  with \(\ID_N\in\Ideal(N,N)\).
\end{definition}

An object~\(N\) is \(\Ideal\)\nb-contractible if and only if
\(0\to N\) is an \(\Ideal\)\nb-projective resolution.  Thus all
\(\Ideal\)\nb-derived functors vanish on~\(\Null_\Ideal\).

Now the following question arises: is the pair of subcategories
\((\gen{\Proj_\Ideal},\Null_\Ideal)\) complementary?  It is
evident that \(\Tri(P,N)=0\) if \(P\in\Proj_\Ideal\) and
\(N\in\Null_\Ideal\).  This extends to
\(P\in\gen{\Proj_\Ideal}\) because the left orthogonal
complement of~\(\Null_\Ideal\) is localising.  This is the easy
half of the definition of a complementary pair.  The other,
non-trivial half requires an additional condition on the
ideal~\(\Ideal\).

\subsection{Compatibility with direct sums}
\label{sec:compatible_sums}

\begin{definition}
  \label{def:ideal_sums}
  An ideal~\(\Ideal\) is called \emph{compatible with countable
    direct sums} if, for any countable family~\((A_i)_{i\in
    I}\) of objects of~\(\Tri\), the canonical isomorphism
  \[
  \Tri\biggl(\bigoplus_{i\in I} A_i,B\biggr)
  \cong \prod_{i\in I} \Tri(A_i,B)
  \]
  restricts to an isomorphism \(\Ideal\bigl(\bigoplus_{i\in I}
  A_i,B\bigr) \cong \prod_{i\in I} \Ideal(A_i,B)\).
\end{definition}

An ideal~\(\Ideal\) is compatible with countable direct sums if
and only if the following holds: given countable families of
objects \((A_i)\), \((B_i)\) and maps \(f_i\in\Ideal(A_i,B_i)\)
for \(i\in I\), we have \(\bigoplus f_i \in
\Ideal\bigl(\bigoplus A_i,\bigoplus B_i\bigr)\).

Recall that direct sums of exact triangles are again exact
(see~\cite{Neeman:Triangulated}) and that the ideal determines
and is determined by the classes of \(\Ideal\)\nb-epimorphisms
or of \(\Ideal\)\nb-monomorphisms.  Therefore, \(\Ideal\) is
compatible with direct sums if and only if direct sums of
\(\Ideal\)\nb-monomorphisms are again
\(\Ideal\)\nb-monomorphisms, if and only if direct sums of
\(\Ideal\)\nb-epimorphisms are again
\(\Ideal\)\nb-epimorphisms.

Moreover, if~\(\Ideal\) is compatible with countable direct
sums, then a direct sum of \(\Ideal\)\nb-equivalences is again
an \(\Ideal\)\nb-equivalence, and~\(\Null_\Ideal\) is a
localising subcategory of~\(\Tri\).  Moreover, a direct sum of
phantom castles over~\(A_i\) is a phantom castle over
\(\bigoplus A_i\).

\begin{example}
  \label{exa:ker_compatible_sums}
  Let~\(F\) be a stable homological functor or an exact functor
  to another triangulated category, and suppose that~\(F\)
  commutes with countable direct sums.  Then \(\ker F\) is a
  homological ideal and compatible with countable direct sums.
\end{example}

This example is, in fact, already the most general case:

\begin{proposition}
  \label{pro:ker_compatible_sum}
  Let~\(\Tri\) be a triangulated category with countable direct
  sums and let~\(\Ideal\) be a homological ideal in~\(\Tri\).
  Let \(F\colon \Tri\to\Abel_\Ideal\Tri\) be a universal
  \(\Ideal\)\nb-exact stable homological functor.  The
  ideal~\(\Ideal\) is compatible with countable direct sums if
  and only if the Abelian category \(\Abel_\Ideal\Tri\) has
  exact countable direct sums and the functor \(F\colon
  \Tri\to\Abel_\Ideal\Tri\) commutes with countable direct
  sums.
\end{proposition}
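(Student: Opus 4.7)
The direction $(\Leftarrow)$ is immediate from Example~\ref{exa:ker_compatible_sums}: if $F$ commutes with countable direct sums, then for any family $f_i\in\Ideal(A_i,B_i)=\ker F(A_i,B_i)$ one has $F(\bigoplus f_i)=\bigoplus F(f_i)=0$, so $\bigoplus f_i\in\Ideal$. Note that this direction uses neither exactness of countable direct sums in $\Abel_\Ideal\Tri$ nor any structural model for that category.

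For the non-trivial direction $(\Rightarrow)$, assume $\Ideal$ is compatible with countable direct sums. My plan is to pass to a concrete model for the universal functor $F$ and read off both conclusions from compatibility. The model I would use is the one recalled from \cite{Meyer-Nest:Homology_in_KK} and \cite{Beligiannis:Relative} in Remark~\ref{rem:Christensen_projective_class}: embed $\Tri$ into an Abelian hull and then localise at a Serre subcategory whose objects encode the $\Ideal$\nb-phantom information. The Abelian hull inherits countable direct sums from $\Tri$ and they are exact there, being computed essentially pointwise on $\Tri$. The hypothesis that $\Ideal$ is compatible with countable direct sums should translate into the statement that this Serre subcategory is closed under countable direct sums; granting this, the Gabriel quotient automatically inherits exact countable direct sums, and the composite functor from $\Tri$ to the quotient commutes with them. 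This composite is a stable $\Ideal$\nb-exact homological functor with the universal property, so by uniqueness of the universal $\Ideal$\nb-exact functor it is equivalent to $F$, and both conclusions transfer to $F$ and $\Abel_\Ideal\Tri$.

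The main obstacle is the translation step: one must verify that compatibility of $\Ideal$ with countable direct sums is equivalent to the relevant Serre subcategory being closed under countable direct sums, which reduces to tracking which morphisms in $\Tri$ produce the phantom data that is quotiented out. A parallel and more elementary route, avoiding the Serre-quotient machinery, would argue via projective resolutions in the presence of enough $\Ideal$\nb-projective objects: compatibility with sums implies that $\Proj_\Ideal$ is closed under countable direct sums and that a direct sum of $\Ideal$\nb-exact chain complexes is again $\Ideal$\nb-exact, so a direct sum of $\Ideal$\nb-projective resolutions $P^i_\bullet\to A_i$ is an $\Ideal$\nb-projective resolution of $\bigoplus A_i$. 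Since the universal $F$ can be reconstructed from its restriction to $\Proj_\Ideal$ together with projective presentations, this should directly yield $\bigoplus F(A_i)\cong F(\bigoplus A_i)$ and, via the same mechanism, exactness of countable direct sums in $\Abel_\Ideal\Tri$.
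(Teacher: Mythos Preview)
Your easy direction matches the paper exactly. For the hard direction, you and the paper both argue by choosing a concrete model for the universal functor, but you pick different models. The paper uses the description from \cite{Meyer-Nest:Homology_in_KK}: the homotopy category $\Ho(\Tri)$ of chain complexes over~$\Tri$, localised at the thick subcategory~$\mathcal{E}$ of $\Ideal$\nb-exact chain complexes, with $\Abel_\Ideal\Tri$ realised as the heart of the canonical t\nb-structure on $\Ho(\Tri)/\mathcal{E}$. In that model the step you flag as the obstacle becomes immediate: $\mathcal{E}$ is closed under countable direct sums precisely because~$\Ideal$ is compatible with them (direct sums of $\Ideal$\nb-monics and $\Ideal$\nb-epics remain such, hence direct sums of $\Ideal$\nb-exact complexes remain $\Ideal$\nb-exact). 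Then $\Ho(\Tri)/\mathcal{E}$ inherits countable direct sums, the heart is visibly closed under them, exactness of sums follows because extensions in the heart come from exact triangles and direct sums of exact triangles are exact, and~$F$ is the degree-zero embedding, which trivially commutes with sums.

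Your Freyd--Beligiannis route should also go through, and the translation you identify is the right place to do the work: the Serre subcategory one quotients out is presented by $\Ideal$\nb-phantom data, and compatibility of~$\Ideal$ with direct sums does force it to be closed under countable sums. But verifying this requires unwinding that construction, whereas in the $\Ho(\Tri)/\mathcal{E}$ model the corresponding closure is a one-line observation. So the paper's choice of model is not merely aesthetic; it makes the crucial step transparent.

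Your alternative route via projective resolutions has a genuine gap: the proposition does \emph{not} assume enough $\Ideal$\nb-projective objects, so you cannot reconstruct~$F$ from projective presentations. That argument would only cover the special case with enough projectives (cf.\ Remark~\ref{rem:Christensen_projective_class}), not the general statement.
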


\begin{proof}
  One direction is just the assertion in
  Example~\ref{exa:ker_compatible_sums}.  The other direction
  requires some description of the universal functor~\(F\).  We
  use the description in~\cite{Meyer-Nest:Homology_in_KK},
  which starts with the homotopy category \(\Ho(\Tri)\) of
  chain complexes with entries in~\(\Tri\).  Since~\(\Tri\) has
  countable direct sums, so has \(\Ho(\Tri)\).  The
  \(\Ideal\)\nb-exact chain complexes form a thick
  subcategory~\(\mathcal{E}\) of \(\Ho(\Tri)\); it is closed
  under countable direct sums because~\(\Ideal\) is compatible
  with countable direct sums.  Hence the localisation
  \(\Ho(\Tri)/\mathcal{E}\) still has countable direct sums.

  The Abelian approximation \(\Abel_\Ideal\Tri\) is equivalent
  to the heart of a canonical truncation structure on
  \(\Ho(\Tri)/\mathcal{E}\) described
  in~\cite{Meyer-Nest:Homology_in_KK} and consists of chain
  complexes that are exact in degrees not equal to~\(0\).  The
  universal functor~\(F\) is the obvious one, viewing an object
  of~\(\Tri\) as a chain complex supported in degree~\(0\).  It
  is evident that the subcategory \(\Abel_\Ideal\Tri\subseteq
  \Ho(\Tri)/\mathcal{E}\) is closed under countable direct
  sums.  Countable direct sums of extensions in
  \(\Abel_\Ideal\Tri\) remain extensions because the analogous
  assertion holds for direct sums of exact triangles in any
  triangulated category (see~\cite{Neeman:Triangulated}) and
  extensions in the heart are related to exact triangles in the
  ambient triangulated category.  Clearly, the functor
  \(\Tri\to\Abel_\Ideal\Tri\) preserves countable direct sums.
\end{proof}

\begin{example}
  \label{exa:finite_rank}
  The ideal of finite rank operators on the category of vector
  spaces is an ideal that is not compatible with countable
  direct sums.
\end{example}

\subsection{Complementarity and structure of cellular objects}
\label{sec:cellular_objects}

The results in this section generalise results of Apostolos
Beligiannis (see \cite{Beligiannis:Relative}*{Theorem 6.5},
\cite{Beligiannis:Relative}*{Corollary 5.12}) in the case
where~\(\Proj_\Ideal\) is generated by a single compact object.

\begin{theorem}
  \label{the:ideal_sum_complementary}
  Let~\(\Tri\) be a triangulated category with countable direct
  sums, and let~\(\Ideal\) be a homological ideal in~\(\Tri\)
  with enough projective objects.  Suppose that~\(\Ideal\) is
  compatible with countable direct sums.  Then the pair of
  localising subcategories
  \((\gen{\Proj_\Ideal},\Null_\Ideal)\) in~\(\Tri\) is
  complementary.
\end{theorem}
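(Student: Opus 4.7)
My plan is to prove the two halves of complementarity separately. The \emph{orthogonality} $\Tri_*(P,N)=0$ for $P\inOb\gen{\Proj_\Ideal}$ and $N\inOb\Null_\Ideal$ is formal: any morphism $f\colon P\to N[k]$ with $P$ $\Ideal$\nb-projective factors as $\ID_{N[k]}\circ f$ and so lies in $\Ideal(P,N[k])=0$. The left orthogonal complement ${}^\perp\Null_\Ideal$ is therefore a triangulated subcategory containing $\Proj_\Ideal$. Compatibility of $\Ideal$ with countable direct sums implies that $\Null_\Ideal$ is closed under countable direct sums (a sum of $\Ideal$\nb-phantoms is an $\Ideal$\nb-phantom), which forces ${}^\perp\Null_\Ideal$ to be closed under countable direct sums as well, hence to contain $\gen{\Proj_\Ideal}$.

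For the \emph{existence of the complementary triangle}, fix $A\inOb\Tri$. The enough-projectives hypothesis yields an $\Ideal$\nb-projective resolution of $A$, and Lemma~\ref{lem:phantom_tower} together with the octahedral construction of Section~\ref{sec:phantom_castle} embeds it in a phantom castle. Define
\[
\tilde{A}_\infty\defeq\hoinjlim_n\tilde{A}_n,\qquad N_\infty\defeq\hoinjlim_n N_n
\]
via the standard Milnor triangles $\bigoplus_n X_n\xrightarrow{\ID-s}\bigoplus_n X_n\to\hoinjlim_n X_n\to\bigoplus_n X_n[1]$. Lemma~\ref{lem:approx_tower_projective} gives $\tilde{A}_n\inOb\Proj_\Ideal^n\subseteq\gen{\Proj_\Ideal}$, and since $\gen{\Proj_\Ideal}$ is localising the Milnor triangle forces $\tilde{A}_\infty\inOb\gen{\Proj_\Ideal}$. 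The exact triangles~\eqref{eq:tri_AAN} assemble into a morphism of towers $(\tilde{A}_n)\to(A)_{\text{const}}\to(N_n)$; a routine octahedral manipulation of the associated Milnor triangles (using that countable direct sums of exact triangles are exact) produces an exact triangle
\[
\tilde{A}_\infty\to A\to N_\infty\to\tilde{A}_\infty[1],
\]
after noting that the homotopy colimit of the constant tower at $A$ is $A$.

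The remaining, decisive step is to show $N_\infty\inOb\Null_\Ideal$. Let $F\colon\Tri\to\Abel_\Ideal\Tri$ be the universal $\Ideal$\nb-exact stable homological functor. By Proposition~\ref{pro:ker_compatible_sum}, $F$ preserves countable direct sums and the target abelian category has exact countable direct sums. Apply $F$ to the Milnor triangle for $N_\infty$: since $F(\iota_n^{n+1})=0$, the shift on $\bigoplus_n F(N_n)$ is zero, so $\ID-s_*$ becomes the identity, which is an isomorphism. The resulting long exact sequence then forces $F_*(N_\infty)=0$, and because $\ker F=\Ideal$ this means $\ID_{N_\infty}\in\Ideal$, i.e., $N_\infty\inOb\Null_\Ideal$.

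The main obstacle I anticipate is the passage from levelwise-exact towers to an exact triangle of homotopy colimits. Since $\hoinjlim$ is only defined up to non-canonical isomorphism, one must organise the three Milnor triangles for $\tilde{A}_\bullet$, the constant tower at $A$, and $N_\bullet$ into a single compatible $3\times3$ diagram before invoking the octahedral axiom. Once that diagram is in place the rest is bookkeeping, with the substantive input concentrated in Proposition~\ref{pro:product_class} (for the projectivity of $\tilde{A}_n$), Proposition~\ref{pro:ker_compatible_sum} (for the exactness of colimits in $\Abel_\Ideal\Tri$), and the phantom-castle construction itself.
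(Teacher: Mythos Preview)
Your strategy matches the paper's almost exactly: build the phantom castle, take homotopy colimits, and use a $3\times3$ diagram to splice them into an exact triangle.  There is one overstatement you should be aware of, though.

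The $3\times3$ lemma (\cite{Beilinson-Bernstein-Deligne}*{Proposition 1.1.11}) does \emph{not} hand you the triangle
\[
\hoinjlim_n\tilde{A}_n \longrightarrow A \longrightarrow \hoinjlim_n N_n \longrightarrow (\hoinjlim_n\tilde{A}_n)[1]
\]
with \emph{both} outer terms identified as the specified homotopy colimits.  When you feed in the Milnor triangles for the constant tower~\(A\) and for~\((N_n)\) together with the columns \(\bigoplus\tilde{A}_n\to\bigoplus A\to\bigoplus N_n\), the construction manufactures an exact first row
\[
\bigoplus\tilde{A}_n \xrightarrow{\ \varphi\ } \bigoplus\tilde{A}_n \longrightarrow X \longrightarrow \bigoplus\tilde{A}_n[1]
\]
and an exact third column \(X\to A\to N_\infty\to X[1]\), but the map~\(\varphi\) is produced by the axiom and is \emph{not} known to be \(\ID-S\) for the tower \((\tilde{A}_n,\alpha_n^{n+1})\).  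So you only know that~\(X\) is a cone on a self-map of \(\bigoplus\tilde{A}_n\), not that \(X\cong\hoinjlim_n\tilde{A}_n\).

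Fortunately this does not damage the argument: any such~\(X\) lies in \(\gen{\Proj_\Ideal}\) because \(\bigoplus\tilde{A}_n\inOb\Proj_\Ideal^\infty\subseteq\gen{\Proj_\Ideal}\), which is all the theorem needs.  The paper proceeds exactly this way, writing an unspecified~\(\tilde{A}\) in that corner.  The identification \(\tilde{A}\cong\hoinjlim_n\tilde{A}_n\) is proved separately (Proposition~\ref{pro:cellular_approx}) by a different argument that constructs the map \(\hoinjlim_n\tilde{A}_n\to A\) directly and checks it is an \(\Ideal\)\nb-equivalence.  So simply weaken your claim from ``produces an exact triangle \(\tilde{A}_\infty\to A\to N_\infty\)'' to ``produces an exact triangle \(X\to A\to N_\infty\) with \(X\inOb\gen{\Proj_\Ideal}\)''.

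A smaller remark: invoking the universal \(\Ideal\)\nb-exact functor via Proposition~\ref{pro:ker_compatible_sum} is correct but heavier than necessary.  Any stable homological functor~\(F\) with \(\ker F=\Ideal\) will do, since the only fact used is that the shift~\(S\) on \(\bigoplus N_n\) factors through \(\bigoplus\iota_n^{n+1}\in\Ideal\) (by compatibility with direct sums), whence \(F(\ID-S)=F(\ID)\) is invertible.
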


We will present two independent proofs, one using phantom
towers, the other cellular approximation towers.  Both require
homotopy colimits:

\begin{definition}
  \label{def:homotopy_limit}
  Let \((D_n,\varphi_n^{n+1})\) be an inductive system
  in~\(\Tri\).  Define the \emph{shift}
  \[
  S\colon \bigoplus D_n\to \bigoplus D_n,
  \qquad
  S|_{D_n}\colon D_n \xrightarrow{\varphi_n^{n+1}} D_{n+1}
  \subseteq \bigoplus D_n.
  \]
  The \emph{homotopy colimit} \(\hoinjlim
  {}(D_n,\varphi_n^{n+1})\) is the third leg in the exact
  triangle
  \[\xymatrix{
    \bigoplus D_n \ar[r]^-{\ID-S} &
    \bigoplus D_n \ar[r] &
    \hoinjlim {}(D_n,\varphi_n^{n+1}) \ar[r] &
    \bigoplus D_n[1].
  }\]
  Recall that \(\ID-S\) determines this triangle uniquely up to
  isomorphism.
\end{definition}

\begin{proof}[Proof of Theorem~\ref{the:ideal_sum_complementary}]
  Since the class of \(A\inOb\Tri\) with \(\Tri_*(A,B)=0\) for
  all \(B\inOb\Null_\Ideal\) is localising, we have
  \(\Tri_*(A,B)=0\) if \(A\inOb\gen{\Proj_\Ideal}\) and
  \(B\inOb\Null_\Ideal\).  It remains to construct, for each
  \(A\inOb\Tri\), an exact triangle \(\tilde{A} \to A\to
  N\to\tilde{A}[1]\) with \(N\inOb\Null_\Ideal\) and
  \(\tilde{A}\inOb\gen{\Proj_\Ideal}\).

  Construct a phantom castle over~\(A\) and let \(N \defeq
  \hoinjlim {}(N_n,\iota_n^{n+1})\) be the homotopy colimit of
  the phantom tower.  We also use the homotopy colimit of the
  constant inductive system \((A,\ID_A)\).  This is just~\(A\)
  because of the split exact triangle
  \begin{equation}
    \label{eq:holim_constant}
    \bigoplus A \xrightarrow{\ID-S}
    \bigoplus A \xrightarrow{\nabla}
    A \xrightarrow{0}
    \bigoplus A[1],
  \end{equation}
  where~\(\nabla\) is the codiagonal map.  By
  \cite{Beilinson-Bernstein-Deligne}*{Proposition 1.1.11} (and
  a rotation), we can find~\(\tilde{A}\) and the dotted arrows
  in the following diagram
  \begin{equation}
    \label{eq:holim_diagram}
    \begin{gathered}
      \xymatrix{
        \bigoplus \tilde{A}_n \ar@{.>}[r] \ar[d]^{\bigoplus \alpha_n}&
        \bigoplus \tilde{A}_n \ar@{.>}[r] \ar[d]^{\bigoplus \alpha_n}&
        \tilde{A} \ar@{.>}[r]|-\circ \ar@{.>}[d]&
        \bigoplus \tilde{A}_n \ar[d]^{\bigoplus \alpha_n}\\
        \bigoplus A \ar[r]^{\ID-S} \ar[d]^{\bigoplus \iota^n}&
        \bigoplus A \ar[d]^{\bigoplus \iota^n} \ar[r]^{\nabla}&
        A \ar[r]|-\circ^{0} \ar@{.>}[d]&
        \bigoplus A \ar[d]^{\bigoplus \iota^n}\\
        \bigoplus N_n \ar[r]^{\ID-S} \ar[d]|-\circ^{\bigoplus \gamma_n}&
        \bigoplus N_n \ar[d]|-\circ^{\bigoplus \gamma_n} \ar[r]&
        N \ar[r]|-\circ \ar@{.>}[d]|-\circ \ar@{}[dr]|{-}&
        \bigoplus N_n \ar[d]|-\circ^{-\bigoplus \gamma_n}\\
        \bigoplus \tilde{A}_n \ar@{.>}[r] &
        \bigoplus \tilde{A}_n \ar@{.>}[r] &
        \tilde{A}[1] \ar@{.>}[r]|-\circ &
        \bigoplus \tilde{A}_n
      }
    \end{gathered}
  \end{equation}
  so that the rows and columns are exact triangles and the
  squares commute except for the one marked with a minus sign,
  which anti-commutes.

  Lemma~\ref{lem:approx_tower_projective} yields
  \(\tilde{A}_n\inOb\Proj_\Ideal^n\) for all \(n\in\N\).  Hence
  \(\bigoplus \tilde{A}_n\inOb \Proj_\Ideal^\infty\subseteq
  \gen{\Proj_\Ideal}\) by
  Proposition~\ref{pro:intersect_class}.  The exactness of the
  first row in~\eqref{eq:holim_diagram} implies
  \(\tilde{A}\inOb\gen{\Proj_\Ideal}\).  We claim that
  \(N\inOb\Null_\Ideal\).  Hence the third column
  in~\eqref{eq:holim_diagram} is the kind of exact triangle we
  need for \(\gen{\Proj_\Ideal}\) and~\(\Null_\Ideal\) to be
  complementary.

  Let~\(F\) be a stable homological functor with \(\ker
  F=\Ideal\).  We must show \(F(N)=0\).  The map~\(S\) factors
  through \(\bigoplus \iota_n^{n+1}\); this map belongs to
  \(\Ideal=\ker F\) because~\(\Ideal\) is compatible with
  direct sums.  Hence \(F(\ID-S)=F(\ID)\) is invertible.  By a
  long exact sequence, this implies \(F(N)=0\), that is,
  \(N\inOb\Null_\Ideal\).
\end{proof}

Suppose from now on that we are in the situation of
Theorem~\ref{the:ideal_sum_complementary}.  Since
\(\gen{\Proj_\Ideal}\) and~\(\Null_\Ideal\) are complementary,
there is a unique exact triangle \(\tilde{A} \to A \to
N\to\tilde{A}[1]\) with \(\Ideal\)\nb-contractible~\(N\) and
\(\Proj_\Ideal\)\nb-cellular~\(\tilde{A}\); we call
\(\tilde{A}\) the \emph{\(\Proj_\Ideal\)\nb-cellular
  approximation} of~\(A\).  Even more, \(\tilde{A}\) and~\(N\)
depend functorially on~\(A\), so that we get two functors
\(L\colon \Tri\to\gen{\Proj_\Ideal}\) and \(N\colon
\Tri\to\Null_\Ideal\).

The proof of Theorem~\ref{the:ideal_sum_complementary} above
provides an explicit model for \(N(A)\): it is the homotopy
colimit of the phantom tower of~\(A\).

\begin{proposition}
  \label{pro:cellular_approx}
  Let \(A\inOb\Tri\) and construct a phantom castle over~\(A\).
  Then \(L(A) = \tilde{A}\) is the homotopy colimit of the
  cellular approximation tower \((\tilde{A}_n)_{n\in\N}\).
\end{proposition}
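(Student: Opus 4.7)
The plan is to set \(\tilde{A}' \defeq \hoinjlim(\tilde{A}_n,\alpha_n^{n+1})\) and exhibit an exact triangle \(\tilde{A}' \to A \to N \to \tilde{A}'[1]\) with \(\tilde{A}'\inOb\gen{\Proj_\Ideal}\) and \(N\inOb\Null_\Ideal\). By the uniqueness of this complementary decomposition established in Theorem~\ref{the:ideal_sum_complementary}, it will follow that \(\tilde{A}'\cong L(A)\), which is the claim.

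Membership of~\(\tilde{A}'\) in \(\gen{\Proj_\Ideal}\) is immediate: Lemma~\ref{lem:approx_tower_projective} gives \(\tilde{A}_n\inOb\Proj_\Ideal^n\subseteq\gen{\Proj_\Ideal}\), and since \(\gen{\Proj_\Ideal}\) is localising, both \(\bigoplus_{n\in\N}\tilde{A}_n\) and the third object in the defining exact triangle of~\(\tilde{A}'\) from Definition~\ref{def:homotopy_limit} lie in it.

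For the core of the argument I would construct a morphism of inductive systems \((\tilde{A}_n,\alpha_n^{n+1}) \to (A,\ID_A)\) using the relations \(\alpha_{n+1}\circ\alpha_n^{n+1} = \alpha_n\) contained in~\eqref{eq:octahedral_relations}; on homotopy colimits this induces a map \(\tilde{A}'\to\hoinjlim(A,\ID_A) = A\) via~\eqref{eq:holim_constant}. To identify the cone, I would apply the \(3\times 3\) lemma \cite{Beilinson-Bernstein-Deligne}*{Proposition~1.1.11} to the commuting square between the defining triangles of these two homotopy colimits, producing a \(3\times 3\) diagram analogous to~\eqref{eq:holim_diagram}. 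The cone of \(\bigoplus\alpha_n\) is \(\bigoplus N_n\) by the triangles~\eqref{eq:tri_AAN}, and a direct calculation using \(\iota_n^{n+1}\circ\iota^n = \iota^{n+1}\) together with the commuting squares in~\eqref{eq:octahedral_relations} shows that the shift difference \(\ID-S\) built from \(\iota_n^{n+1}\) is a valid choice for the first map in the third row of the \(3\times 3\) diagram. Hence the third column is an exact triangle \(\tilde{A}'\to A\to \hoinjlim(N_n,\iota_n^{n+1})\to\tilde{A}'[1]\), and this homotopy colimit is the very object~\(N(A)\) which was shown to lie in \(\Null_\Ideal\) in the last paragraph of the proof of Theorem~\ref{the:ideal_sum_complementary}.

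The main obstacle is the identification of the third row of the \(3\times 3\) diagram with the defining triangle of \(\hoinjlim(N_n,\iota_n^{n+1})\): the \(3\times 3\) lemma determines this row only up to a non-canonical choice, so one must verify that the shift-difference built from the connecting maps~\(\iota_n^{n+1}\) is indeed compatible with the two squares inherited from the octahedral-axiom data of the phantom castle. Once that compatibility is in place, the complementary-pair decomposition of~\(A\) is realised by the third column, and uniqueness of the decomposition for \((\gen{\Proj_\Ideal},\Null_\Ideal)\) finishes the proof.
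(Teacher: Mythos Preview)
Your approach differs from the paper's, and the difference is exactly the point flagged in the sentence preceding the proof: ``This does not yet follow from~\eqref{eq:holim_diagram} because we cannot control the dotted maps.'' You try to control them; the paper avoids the issue entirely.

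The obstacle you name is genuine and is not resolved by the observation that \(\ID-S\) built from the~\(\iota_n^{n+1}\) makes the relevant squares commute. The \(3\times3\) lemma, applied with rows~1 and~2 as input, produces \emph{some} map \(\psi\colon \bigoplus N_n\to\bigoplus N_n\) in row~3, but commutativity of the squares does not force \(\psi=\ID-S\): the vertical map \(\bigoplus\iota^n\) satisfies \(F(\iota^n)=0\) for all \(n\ge1\), so it is far from epic and many~\(\psi\) fit. What you would need is a version of the \(3\times3\) lemma that takes a full morphism of triangles as input and returns a distinguished triangle of cones, and that requires extra hypotheses (``good'' morphisms) not available here.

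The paper instead constructs \(f\colon \tilde{A}'\to A\) via the ordinary fill-in axiom for the diagram~\eqref{eq:holim_induce} and then proves directly that \(F(f)\) is invertible for any stable homological functor~\(F\) with \(\ker F=\Ideal\). The argument is a diagram chase on the long exact \(F\)\nb-sequence of the direct sum of the triangles~\eqref{eq:tri_AAN}, comparing the action of \(\ID-S\) on the three terms: on \(F\bigl(\bigoplus N_n\bigr)\) the shift is an \(\Ideal\)\nb-phantom, so \(F(\ID-S)=\ID\) there; on \(F\bigl(\bigoplus A\bigr)\) it is a split monomorphism with cokernel~\(F(A)\); a five-lemma/snake argument then shows \(F(\ID-S)\) on \(F\bigl(\bigoplus\tilde{A}_n\bigr)\) is injective with cokernel \(F(\nabla)\circ F\bigl(\bigoplus\alpha_n\bigr)\), whence \(F(f)\) is an isomorphism. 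Thus the cone of~\(f\) is \(\Ideal\)\nb-contractible without ever being identified with \(\hoinjlim N_n\); that identification is recovered only \emph{a posteriori}, in the paragraph following the proof, from uniqueness of the complementary decomposition.
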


This does not yet follow from~\eqref{eq:holim_diagram} because
we cannot control the dotted maps.

\begin{proof}
  Let \(\tilde{A}\defeq \hoinjlim
  {}(\tilde{A}_n,\alpha_n^{n+1})\).  We compare the exact
  triangle that defines the homotopy colimit~\(\tilde{A}\) with
  the triangle~\eqref{eq:holim_constant}.  The triangulated
  category axioms provide \(f\in\Tri(\tilde{A},A)\) that makes
  the following diagram commute:
  \begin{equation}
    \label{eq:holim_induce}
    \begin{gathered}
      \xymatrix{
        \bigoplus \tilde{A}_n
        \ar[r]^{\ID-S} \ar[d]^{\bigoplus \alpha_n}&
        \bigoplus \tilde{A}_n
        \ar[r] \ar[d]^{\bigoplus \alpha_n}&
        \tilde{A} \ar@{.>}[d]^{f} \ar[r]|-\circ &
        \bigoplus \tilde{A}_n \ar[d]^{\bigoplus \alpha_n}\\
        \bigoplus A \ar[r]^{\ID-S} &
        \bigoplus A \ar[r]^{\nabla} &
        A \ar[r]|-\circ^-{0} &
        \bigoplus A.
      }
    \end{gathered}
  \end{equation}

  We claim that~\(f\) is an \(\Ideal\)\nb-equivalence.
  Equivalently, the cone of~\(f\) is
  \(\Ideal\)\nb-contractible, so that the mapping cone triangle
  for~\(f\) has entries in \(\gen{\Proj_\Ideal}\)
  and~\(\Null_\Ideal\); this implies that \(L(A)=\tilde{A}\).

  Let~\(F\) be a stable homological functor with \(\ker
  F=\Ideal\).  We check that \(F(f)\) is invertible.  The
  direct sum of the triangles~\eqref{eq:tri_AAN} for \(n\in\N\)
  is again an exact triangle.  On the long exact homology
  sequence
  \begin{multline*}
    \dotsb
    \longrightarrow F_{m+1}\Bigl(\bigoplus N_n\Bigr)
    \\ \longrightarrow F_m\Bigl(\bigoplus \tilde{A}_n\Bigr)
    \longrightarrow F_m\Bigl(\bigoplus A\Bigr)
    \longrightarrow F_m\Bigl(\bigoplus N_n\Bigr)
    \longrightarrow \dotsb
  \end{multline*}
  for this exact triangle, consider the operator induced by
  \(\ID-S\) on each entry.  Since~\(\Ideal\) is compatible with
  direct sums, the shift map~\(S\) on \(\bigoplus N_n\) is a
  phantom map, so that \(F(\ID-S)\) acts identically on
  \(F\bigl(\bigoplus N_n\bigr)\).  On \(F\bigl(\bigoplus
  A\bigr)\), the map \(\ID-S\) induces a split monomorphism
  with cokernel \(F(\nabla)\colon F\bigl(\bigoplus A\bigr)\to
  F(A)\).  Now a diagram chase shows that the map on
  \(F\bigl(\bigoplus \tilde{A}_n\bigr)\) induced by \(\ID-S\)
  is injective and has the cokernel
  \[
  F\Bigl(\bigoplus \tilde{A}_n\Bigr)
  \xrightarrow{F\bigl(\bigoplus \alpha_n\bigr)}
  F\Bigl(\bigoplus A\Bigr) \xrightarrow{F(\nabla)}
  F(A).
  \]
  Comparing the long exact homology sequences for the two rows
  in~\eqref{eq:holim_induce}, we conclude that \(F(f)\) is
  indeed invertible.
\end{proof}

We have proved Proposition~\ref{pro:cellular_approx} by
constructing an \(\Ideal\)\nb-equivalence between an arbitrary
object of~\(\Tri\) and the homotopy colimit of its cellular
approximation tower.  This provides another, independent proof
of Theorem~\ref{the:ideal_sum_complementary}.  Since the exact
triangle \(\tilde{A}\to A\to N\to \tilde{A}[1]\) with
\(\tilde{A}\inOb\gen{\Proj_\Ideal}\) and \(N\inOb\Null_\Ideal\)
is unique up to isomorphism, both proofs construct the same
exact triangle.  The first proof shows that~\(N\) is the
homotopy colimit of the phantom tower, the second one shows
that~\(\tilde{A}\) is the homotopy colimit of the cellular
approximation tower.  We conclude, therefore, that the
object~\(\tilde{A}\) in~\eqref{eq:holim_diagram} is the
homotopy colimit of the phantom tower and that the map
\(\tilde{A}\to A\) in~\eqref{eq:holim_diagram} agrees with the
map~\(f\) from~\eqref{eq:holim_induce}.

\begin{theorem}
  \label{the:cellular_limit_tower}
  Let~\(\Tri\) be a triangulated category with countable direct
  sums, and let~\(\Ideal\) be a homological ideal in~\(\Tri\)
  that is compatible with countable direct sums and has enough
  projective objects.  Let \(A\inOb\Tri\) and construct a
  phantom castle over~\(A\).  The following are equivalent:
  \begin{enumerate}[label=\textup{(\arabic{*})}]
  \item \(A\) is \(\Proj_\Ideal\)\nb-cellular, that is,
    \(A\inOb\gen{\Proj_\Ideal}\);

  \item \(A\) is isomorphic to the homotopy colimit of its
    cellular approximation tower;

  \item \(A\) is isomorphic to the homotopy colimit of an
    inductive system \((P_n,\varphi_n)\) with \(P_n\inOb
    \bigcup_{k\in\N} \Proj_\Ideal^k\) for all \(n\in\N\);

  \item \(A\) is \(\Ideal^{2\infty}\)\nb-projective.
  \end{enumerate}
  As a consequence, \(\Ideal^{2\infty}=\Ideal^{n\infty}\) for
  all \(n\ge2\).
\end{theorem}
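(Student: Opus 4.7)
The plan is to prove the cyclic chain of implications $(1) \Rightarrow (2) \Rightarrow (3) \Rightarrow (4) \Rightarrow (1)$ and then deduce the statement about $\Ideal^{n\infty}$. For $(1) \Rightarrow (2)$, assume $A \in \gen{\Proj_\Ideal}$. The trivial exact triangle $A \xrightarrow{\ID} A \to 0 \to A[1]$ has left term in $\gen{\Proj_\Ideal}$ and right term in $\Null_\Ideal$. By Theorem~\ref{the:ideal_sum_complementary} and the uniqueness of the complementary-pair triangle, this must coincide with the canonical triangle $L(A)\to A\to N(A)\to L(A)[1]$, so $A \cong L(A)$; Proposition~\ref{pro:cellular_approx} then identifies $L(A)$ with the homotopy colimit of the cellular approximation tower.

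Step $(2) \Rightarrow (3)$ is immediate from Lemma~\ref{lem:approx_tower_projective}, since $\tilde{A}_n \in \Proj_\Ideal^n \subseteq \bigcup_k \Proj_\Ideal^k$. For $(3) \Rightarrow (4)$, write $A = \hoinjlim(P_n,\varphi_n)$ with each $P_n \in \Proj_\Ideal^{k_n}$. Because $\Ideal^\infty \subseteq \Ideal^{k_n}$, each $P_n$ is in fact $\Ideal^\infty$-projective. The class $\Proj_\Ideal^\infty$ is closed under countable direct sums and suspensions (the former as for any class of projectives, the latter by the description in Proposition~\ref{pro:intersect_class}), so both $\bigoplus P_n$ and $\bigoplus P_n[1]$ lie in $\Proj_\Ideal^\infty$. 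Rotating the defining triangle
\[
\bigoplus P_n \xrightarrow{\ID - S} \bigoplus P_n \to A \to \bigoplus P_n[1]
\]
places $A$ as the middle term of an exact triangle with both outer legs in $\Proj_\Ideal^\infty$; Proposition~\ref{pro:product_class} applied with $\Ideal_1 = \Ideal_2 = \Ideal^\infty$ gives $A \in \Proj_\Ideal^{2\infty}$. For $(4) \Rightarrow (1)$, Proposition~\ref{pro:product_class} realises any $A \in \Proj_\Ideal^{2\infty}$ as a summand of some $P$ sitting in an exact triangle $P_2 \to P \to P_1 \to P_2[1]$ with $P_1,P_2 \in \Proj_\Ideal^\infty$. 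Proposition~\ref{pro:intersect_class} gives $\Proj_\Ideal^\infty \subseteq \gen{\Proj_\Ideal}$, and $\gen{\Proj_\Ideal}$ is closed under extensions and summands, so $A \in \gen{\Proj_\Ideal}$.

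For the final consequence, the equivalence just established yields $\Proj_\Ideal^{2\infty} = \gen{\Proj_\Ideal}$. Iterating the argument of $(4)\Rightarrow(1)$ using Proposition~\ref{pro:product_class} a total of $n-1$ times, any $A \in \Proj_\Ideal^{n\infty}$ is obtained by successive extensions from objects of $\Proj_\Ideal^\infty \subseteq \gen{\Proj_\Ideal}$ followed by a summand, so $\Proj_\Ideal^{n\infty} \subseteq \gen{\Proj_\Ideal} = \Proj_\Ideal^{2\infty}$; the reverse inclusion $\Proj_\Ideal^{2\infty} \subseteq \Proj_\Ideal^{n\infty}$ is automatic from $\Ideal^{n\infty} \subseteq \Ideal^{2\infty}$. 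Both ideals have enough projective objects by iterated application of Proposition~\ref{pro:product_class}, and such ideals are determined by their classes of projectives (as recalled in Example~\ref{exa:idempotent_ideals}), so $\Ideal^{n\infty} = \Ideal^{2\infty}$.

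The most delicate step is $(3) \Rightarrow (4)$: the key insight is that one does not need the $k_n$ to grow with $n$, because any finite-level projective is already $\Ideal^\infty$-projective. Once this is observed, the homotopy colimit triangle gives the required expression through a single rotation plus one application of Proposition~\ref{pro:product_class}, rather than any intricate inductive construction.
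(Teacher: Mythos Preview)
Your proof is correct and follows essentially the same route as the paper: the cyclic chain $(1)\Rightarrow(2)\Rightarrow(3)\Rightarrow(4)\Rightarrow(1)$, with Proposition~\ref{pro:cellular_approx} for the first step, Propositions \ref{pro:intersect_class} and~\ref{pro:product_class} for $(3)\Rightarrow(4)$ and $(4)\Rightarrow(1)$, and then the determination of ideals by their projectives for the final consequence. The only cosmetic difference is that the paper records $(1)\Leftrightarrow(2)$ directly rather than closing the cycle, and invokes Proposition~\ref{pro:intersect_class} on $\bigoplus P_n$ in one shot rather than first observing each~$P_n$ is $\Ideal^\infty$-projective.
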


\begin{proof}
  Since \(L(A)\cong A\) if and only if
  \(A\inOb\gen{\Proj_\Ideal}\),
  Proposition~\ref{pro:cellular_approx} yields the equivalence
  of~(1) and~(2).  The implication (2)\(\Longrightarrow\)(3) is
  trivial: the cellular approximation tower provides an
  inductive system of the required kind.

  We check that~(3) implies~(4).  Let \((P_n,\varphi_n)\) be an
  inductive system as in~(3).  First,
  Proposition~\ref{pro:intersect_class} shows that \(\bigoplus
  P_n\) is \(\Ideal^\infty\)\nb-projective.  Then
  Proposition~\ref{pro:product_class} shows that the homotopy
  colimit is \(\Ideal^{2\infty}\)\nb-projective.

  Propositions \ref{pro:intersect_class}
  and~\ref{pro:product_class} show recursively that all
  \(\Ideal^\alpha\)\nb-projective objects belong
  to~\(\gen{\Proj_\Ideal}\) for
  \(n=0,1,2,3,\dotsc,\infty,2\cdot\infty\).  Hence~(4)
  implies~(1), so that all four conditions are equivalent.

  Finally, since \(\Proj_\Ideal^{2\infty}=\gen{\Proj_\Ideal}\),
  it follows from Proposition~\ref{pro:product_class} that the
  powers \(\Ideal^{n\infty}\) for \(n\ge2\) have the same
  projective objects.  Therefore, they are all equal.
\end{proof}

\begin{proposition}
  \label{pro:localise_phantom_castle}
  The \(\Proj_\Ideal\)\nb-cellular approximation functor
  \(L\colon \Tri\to\gen{\Proj_\Ideal}\) maps a phantom castle
  over \(A\inOb\Tri\) to a phantom castle over \(L(A)\).  A
  morphism \(f\in\Tri(A,B)\) belongs to~\(\Ideal^\alpha\) for
  some~\(\alpha\) if and only if
  \(L(f)\in\Tri\bigl(L(A),L(B)\bigr)\) does.
\end{proposition}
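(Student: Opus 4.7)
The strategy rests on two observations. First, \(L\) is a triangulated functor because it is the right adjoint of the inclusion of the triangulated subcategory \(\gen{\Proj_\Ideal}\). Second, the counit \(L(A)\to A\) fits into the complementary exact triangle \(L(A)\to A\to N(A)\to L(A)[1]\) with \(N(A)\inOb\Null_\Ideal\), and the \(\Ideal\)\nb-contractibility of \(N(A)\) upgrades to \(\Ideal^\alpha\)\nb-contractibility for every power \(\alpha\in\N\cup\{\infty,n\infty:n\ge2\}\): from \(\ID_N\in\Ideal\), writing \(\ID_N=\ID_N\circ\dotsb\circ\ID_N\) gives \(\ID_N\in\Ideal^n\) for every~\(n\), hence also \(\ID_N\in\Ideal^\infty\) and \(\ID_N\in\Ideal^{n\infty}\).

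Given these, I would prove the second assertion first. By Propositions~\ref{pro:product_class} and~\ref{pro:intersect_class}, every power \(\Ideal^\alpha\) is a homological ideal with enough projective objects, hence the kernel of a universal \(\Ideal^\alpha\)\nb-exact stable homological functor \(F^{(\alpha)}\). Any such functor sends each \(\Ideal^\alpha\)\nb-contractible object to zero, since \(\ID_N\in\Ideal^\alpha=\ker F^{(\alpha)}\) forces \(\ID_{F^{(\alpha)}(N)}=F^{(\alpha)}(\ID_N)=0\). In particular \(F^{(\alpha)}(N(A))=0\), so the long exact sequence of the counit triangle produces a natural isomorphism \(F^{(\alpha)}(L(A))\cong F^{(\alpha)}(A)\). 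By naturality, \(F^{(\alpha)}(L(f))\) is conjugate to \(F^{(\alpha)}(f)\) for every morphism~\(f\), so \(f\in\Ideal^\alpha\) if and only if \(L(f)\in\Ideal^\alpha\).

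For the first assertion, apply~\(L\) object- and morphism-wise to the phantom castle over~\(A\). The objects \(P_n\inOb\Proj_\Ideal\) and \(\tilde A_n\inOb\Proj_\Ideal^n\) (Lemma~\ref{lem:approx_tower_projective}) both lie in \(\gen{\Proj_\Ideal}\), so the counit identifies \(L(P_n)\cong P_n\) and \(L(\tilde A_n)\cong\tilde A_n\); in particular \(L(P_n)\) remains \(\Ideal\)\nb-projective. The morphisms \(\iota_n^{n+1}\in\Ideal\) go into \(\Ideal\) under~\(L\) by the second assertion, so they remain \(\Ideal\)\nb-phantom. The four exact triangles that constitute the castle remain exact under the triangulated functor~\(L\), and the commuting relations in~\eqref{eq:octahedral_relations} persist by functoriality. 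Thus \(L\) sends the phantom castle over~\(A\) to a phantom castle over \(L(A)\). The main technical obstacle is the uniform contractibility observation at the start, together with the identification of \(\Ideal^\alpha\) as \(\ker F^{(\alpha)}\); once that is granted, the rest reduces to naturality and bookkeeping.
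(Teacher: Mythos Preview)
Your proof is correct and follows essentially the same route as the paper: both use that \(L\) is exact to transport the triangles and commuting diagrams, that \(L\) fixes objects of \(\gen{\Proj_\Ideal}\) (hence the \(P_n\)), and that any stable homological functor \(F\) with \(\ker F=\Ideal^\alpha\) kills \(\Null_\Ideal\), so the counit \(L(A)\to A\) becomes an isomorphism under~\(F\). You spell out the step \(\Null_\Ideal\subseteq\Null_{\Ideal^\alpha}\) (via \(\ID_N=\ID_N^n\)) more explicitly than the paper does, but the argument is the same.
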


\begin{proof}
  Since~\(L\) is an exact functor, it preserves the commuting
  diagrams and exact triangles required for a phantom castle.
  It also maps~\(\Proj_\Ideal\) to itself because \(L(B)\cong
  B\) for all \(B\inOb\gen{\Proj_\Ideal}\).  It remains to
  check that \(L(f)\in\Ideal^\alpha\bigl(L(B),L(B')\bigr)\) if
  and only if \(f\in\Ideal^\alpha(B,B')\).  Let~\(F\) be a
  stable homological functor with \(\Ideal^\alpha = \ker F\).
  Then \(F(N)=0\) for all \(N\inOb\Null_\Ideal\).  Therefore,
  \(F\) descends to the localisation \(\Tri/\Null_\Ideal\);
  equivalently, the natural transformation \(F\circ
  L\Rightarrow F\) is an isomorphism.  In particular,
  \(F(f)=0\) if and only if \(F\bigl(L(f)\bigr)=0\).
\end{proof}

As a result, it makes almost no difference whether we work in
\(\Tri\) or \(\Tri/\Null_\Ideal\).  We work in~\(\Tri\) most of
the time and allow~\(\Null_\Ideal\) to be non-trivial in order
to formulate Theorem~\ref{the:ideal_sum_complementary}.

The direct sums \(\bigoplus \tilde{A}_n\)
in~\eqref{eq:holim_induce} are \(\Ideal^\infty\)\nb-projective
by Proposition~\ref{pro:intersect_class}.  The map
\(\nabla\circ \bigoplus \alpha_n\colon \bigoplus \tilde{A}_n\to
A\) is \(\Ideal^\infty\)\nb-epic because it is
\(\Ideal^n\)\nb-epic for all \(n\in\N\).  We may replace~\(A\)
by~\(\tilde{A}\) in this statement by
Proposition~\ref{pro:localise_phantom_castle}.  Thus the top row
in~\eqref{eq:holim_induce} is an \(\Ideal^\infty\)\nb-exact
triangle.  This means that the chain complex
\[
\dotsb \to 0 \to
\bigoplus \tilde{A}_n \xrightarrow{\ID-S}
\bigoplus \tilde{A}_n \to \tilde{A}
\]
is \(\Ideal^\infty\)\nb-exact and hence an
\(\Ideal^\infty\)\nb-projective resolution of~\(\tilde{A}\).
Once again, Proposition~\ref{pro:localise_phantom_castle}
allows us to replace~\(\tilde{A}\) by~\(A\) in this statement,
that is, we get an \(\Ideal^\infty\)\nb-projective resolution
of length~\(1\)
\begin{equation}
  \label{eq:Ideal_infty_resolution}
  \dotsb \to 0 \to
  \bigoplus \tilde{A}_n \xrightarrow{\ID-S}
  \bigoplus \tilde{A}_n \to A.
\end{equation}
This will allow us to analyse the convergence of the ABC
spectral sequence.

\subsection{Complementarity via partially defined adjoints}
\label{sec:kern_adjointable}

Suppose that we are given a thick subcategory~\(\Null\) of a
triangulated category~\(\Tri\) and that we want to use
Theorem~\ref{the:ideal_sum_complementary} to show that it is
reflective, that is, there is another thick
subcategory~\(\Local\) such that \((\Local,\Null)\) is
complementary.

To have a chance of doing so, \(\Tri\) must have countable
direct sums, and the subcategory~\(\Null\) must be localising,
that is, closed under countable direct sums: this happens
whenever Theorem~\ref{the:ideal_sum_complementary} applies.  By
\cite{Meyer-Nest:BC}*{Proposition 2.9}, the only candidate
for~\(\Local\) is the left orthogonal complement
\[
\Local \defeq \{A\inOb\Tri\mid
\text{\(\Tri(A,N)=0\) for all \(N\inOb\Null\)}\}
\]
of~\(\Null\), which is another localising subcategory.

The starting point of our method is a stable additive functor
\(F\colon \Tri\to\Addi\) with
\[
\Null = \Null_F \defeq \{A\inOb\Tri\mid F(A)=0\}.
\]
This functor yields a stable ideal \(\Ideal_F\defeq \ker F\).
In applications, \(F\) is either a stable homological functor
to a stable Abelian category or an exact functor to another
triangulated category; in either case, the ideal \(\ker F\) is
homological and~\(\Null_F\) is the class of all
\(\Ideal_F\)\nb-contractible objects.  In addition, we
assume~\(F\) to \emph{commute with countable direct sums}, so
that~\(\Ideal_F\) is compatible with countable direct sums.

In order to apply Theorem~\ref{the:ideal_sum_complementary}, it
remains to prove that there are enough
\(\Ideal_F\)\nb-projective objects in~\(\Tri\).  Then the pair
of subcategories \((\gen{\Proj_{\Ideal_F}}, \Null)\) is
complementary.  For a \emph{good} choice of~\(F\), this may be
much easier than proving directly that \((\Local,\Null)\) is
complementary.  The choice in the following example never
helps.  But often, there is another choice for~\(F\) that does.

\begin{example}
  \label{exa:bad_functor}
  The localisation functor \(\Tri\to\Tri/\Null\) is a possible
  choice for~\(F\) -- that is, it has the right kernel on
  objects -- but it should be a bad one because it tells us
  nothing new about~\(\Null\).  In fact, the \(\ker
  F\)\nb-projective objects are precisely the objects
  of~\(\Local\), so that we have not gained anything.
\end{example}

We now discuss a sufficient condition for enough projective
objects from~\cite{Meyer-Nest:Homology_in_KK}.  The \emph{left
  adjoint} of \(F\colon \Tri\to\Addi\) is defined on an object
\(A\inOb\Addi\) if the functor \(B\mapsto
\Addi\bigl(A,F(B)\bigr)\) on~\(\Tri\) is representable, that
is, there is an object~\(F^\lad(A)\) of~\(\Tri\) and a
natural isomorphism \(\Tri(F^\lad(A),B)\cong
\Addi\bigl(A,F(B)\bigr)\) for all \(B\inOb\Tri\).  We say
that~\(F^\lad\) is \emph{defined on enough objects} if, for
any object~\(B\) of~\(\Addi\) there is an epimorphism \(B'\to
B\) such that~\(F^\lad\) is defined on~\(B'\).

The following theorem asserts that~\(\Ideal_F\) has enough
projective objects if~\(F^\lad\) is defined on enough
objects.  The statement is somewhat more involved because it is
often useful to shrink the domain of definition
of~\(F^\lad\) to a sufficiently big
subcategory~\(\Proj\Addi\).

\begin{theorem}
  \label{the:complementary_adjoint}
  Let \(\Tri\), \(\Addi\), and~\(F\) be as above, that is,
  \(\Tri\) is a triangulated category with countable direct
  sums, \(\Addi\) is either a stable Abelian category or a
  triangulated category, and \(F\colon \Tri\to\Addi\) is a
  stable functor commuting with countable direct sums and
  either homological \textup{(}if~\(\Addi\) is
  Abelian\textup{)} or exact \textup{(}if~\(\Addi\) is
  triangulated\textup{)}.  Let \(\Ideal_F\defeq \ker F\) and
  let~\(\Null_F\) be the class of \(F\)\nb-contractible objects
  as above.

  Let \(\Proj\Addi\subseteq\Addi\) be a subcategory with two
  properties: first, for any \(A\inOb\Tri\), there exists an
  epimorphism \(P\to F(A)\) with \(P\inOb\Proj\Addi\);
  secondly, the left adjoint functor~\(F^\lad\) of~\(F\) is
  defined on \(\Proj\Addi\), that is, for each
  \(P\in\Proj\Addi\), there is an object~\(F^\lad(P)\)
  in~\(\Tri\) with \(\Tri(F^\lad(P),B) \cong
  \Tri\bigl(P,F(B)\bigr)\) naturally for all \(B\inOb\Tri\).

  Then~\(\Ideal_F\) has enough projective objects, the
  subcategory~\(\Null_F\) is reflective, and the pair of
  localising subcategories
  \(\bigl(\gen{F^\lad(\Proj\Addi)},\Null_F\bigr)\) is
  complementary.
\end{theorem}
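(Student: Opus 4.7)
The strategy is to apply Theorem~\ref{the:ideal_sum_complementary} to the homological ideal $\Ideal_F \defeq \ker F$ in $\Tri$. Compatibility of $\Ideal_F$ with countable direct sums is precisely Example~\ref{exa:ker_compatible_sums}, since $F$ is assumed to commute with countable direct sums. The $\Ideal_F$-contractible objects coincide with $\Null_F$, because $\ID_N \in \Ideal_F(N,N)$ if and only if $F(\ID_N) = \ID_{F(N)} = 0$, i.e.\ $F(N) = 0$. The substantive work is therefore to exhibit enough $\Ideal_F$-projective objects and to identify the localising subcategory they generate with $\gen{F^\lad(\Proj\Addi)}$.

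The key observation is that $F^\lad(P)$ is $\Ideal_F$-projective for every $P \inOb \Proj\Addi$. The adjunction gives a natural bijection $\Tri(F^\lad(P), B) \cong \Addi(P, F(B))$ sending $\varphi$ to $F(\varphi) \circ \eta_P$, where $\eta_P\colon P \to F(F^\lad(P))$ is the unit. A morphism $\varphi \in \Ideal_F(F^\lad(P), B)$ satisfies $F(\varphi) = 0$ and hence corresponds to the zero morphism in $\Addi(P, F(B))$, so $\varphi = 0$ by the bijection. This shows $\Ideal_F(F^\lad(P), \blank) = 0$, which is exactly $\Ideal_F$-projectivity.

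To produce enough projectives over a given $A \inOb \Tri$, choose an epimorphism $e\colon P \to F(A)$ with $P \inOb \Proj\Addi$ and let $\alpha\colon F^\lad(P) \to A$ be the morphism corresponding to $e$ under the adjunction, so $e = F(\alpha) \circ \eta_P$. In the Abelian case the factorisation forces $F(\alpha)$ to be epi in $\Addi$, and for the homological functor $F$ this is equivalent, via the long exact sequence of the triangle containing $\alpha$, to $\alpha$ being $\Ideal_F$-epi; the argument in the triangulated-$\Addi$ case is analogous. Together with the preceding paragraph, $\alpha$ exhibits $A$ as the target of an $\Ideal_F$-epi from an $\Ideal_F$-projective, so $\Ideal_F$ has enough projective objects and Theorem~\ref{the:ideal_sum_complementary} yields that $(\gen{\Proj_{\Ideal_F}}, \Null_F)$ is complementary and $\Null_F$ is reflective.

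It remains to verify $\gen{\Proj_{\Ideal_F}} = \gen{F^\lad(\Proj\Addi)}$. One inclusion is immediate from the $\Ideal_F$-projectivity of the objects $F^\lad(P)$. Conversely, any $\Ideal_F$-projective $Q$ receives an $\Ideal_F$-epi $F^\lad(P) \to Q$ by the previous step, and projectivity of $Q$ lifts $\ID_Q$ along this map, exhibiting $Q$ as a retract of $F^\lad(P)$; since localising subcategories are thick, $Q \inOb \gen{F^\lad(\Proj\Addi)}$, which gives the reverse inclusion. The main potential obstacle in the whole argument is the correct interpretation, in the triangulated-$\Addi$ case, of ``epimorphism $P \to F(A)$'' together with the verification that the adjoint $\alpha$ is $\Ideal_F$-epi; the identity $e = F(\alpha) \circ \eta_P$ reduces this to a question inside $\Addi$, which is the entire point of introducing the partially defined adjoint.
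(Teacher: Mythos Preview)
Your proof is correct and follows the same strategy as the paper: verify that $\Ideal_F$ has enough projectives, identify these as retracts of objects $F^\lad(P)$, and then invoke Theorem~\ref{the:ideal_sum_complementary}. The only difference is that the paper outsources the verification of ``enough projectives'' and the description of $\Proj_{\Ideal_F}$ to \cite{Meyer-Nest:Homology_in_KK}*{Proposition~3.37}, whereas you spell out the adjunction argument directly; your closing remark about the meaning of ``epimorphism'' in the triangulated target case is a fair caveat that applies equally to the paper's formulation.
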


\begin{proof}
  \cite{Meyer-Nest:Homology_in_KK}*{Proposition 3.37} shows
  that~\(\Ideal_F\) has enough projective objects and that any
  projective object is a direct summand of \(F^\lad(P)\) for
  some \(P\in\Proj\Addi\).  Now
  Theorem~\ref{the:ideal_sum_complementary} yields the
  assertions.
\end{proof}

Theorem~\ref{the:complementary_adjoint} is non-trivial even
if~\(\Null_F\) contains only zero objects, that is, if
\(F(A)=0\) implies \(A=0\).  Then it asserts
\(\gen{F^\lad(\Proj\Addi)}=\Tri\).

In the situation of Theorem~\ref{the:complementary_adjoint}, we
also understand how objects of \(\gen{F^\lad(\Proj\Addi)}\)
are to be constructed from the building blocks in
\(F^\lad(\Proj\Addi)\).

Let \(\Cat_1\star\Cat_2\) for subcategories
\(\Cat_1,\Cat_2\subseteq\Tri\) be the subcategory of all
objects~\(A\) for which there is an exact sequence \(A_1\to
A\to A_2\to A_1[1]\) with \(A_1\inOb\Cat_1\) and
\(A_2\inOb\Cat_2\).  We abbreviate \(\Proj_F\defeq
F^\lad(\Proj\Addi)\) and recursively define \(\Proj_F^{\star
  n}\) for \(n\in\N\) by \(\Proj_F^{\star 0} \defeq \{0\}\) and
\(\Proj_F^{\star n} \defeq \Proj_F^{\star n-1}\star\Proj_F\)
for \(n\ge1\).

\begin{theorem}
  \label{the:adjoint_limit}
  In the situation of
  Theorem~\textup{\ref{the:complementary_adjoint}}, any object
  of \(\gen{F^\lad(\Proj\Addi)}\) is a homotopy colimit of an
  inductive system \((A_n)_{n\in\N}\) with
  \(A_n\in\Proj_F^{\star n}\).
\end{theorem}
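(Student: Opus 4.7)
The plan is to apply Theorem~\ref{the:cellular_limit_tower} to a cellular approximation tower of $A$, after making a careful choice of $\Ideal_F$\nb-projective resolution so that the projective objects at each stage lie in $\Proj_F = F^\lad(\Proj\Addi)$ itself, not merely in its closure under retracts.

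First I would show that, for every $B \inOb \Tri$, there is an $\Ideal_F$\nb-epimorphism $F^\lad(Q) \to B$ with $Q \inOb \Proj\Addi$. By hypothesis on $\Proj\Addi$, pick an epimorphism $Q \to F(B)$ with $Q \inOb \Proj\Addi$; its adjunct $F^\lad(Q) \to B$ is then an $\Ideal_F$\nb-epimorphism whose source is $\Ideal_F$\nb-projective. As a consequence, every object of~$\Tri$ admits an $\Ideal_F$\nb-projective resolution whose entries lie \emph{in} $\Proj_F$. Applying this to~$A$ and using Lemma~\ref{lem:phantom_tower}, extend this resolution to a phantom castle with $P_n \inOb \Proj_F$ for all~$n$.

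Next I would inspect the associated cellular approximation tower $(\tilde{A}_n, \alpha_n^{n+1})$ from~\eqref{eq:tri_AAP}. Since $\iota^0 = \ID_A$, the triangle~\eqref{eq:tri_AAN} for $n=0$ forces $\tilde{A}_0 = 0$, and each subsequent $\tilde{A}_{n+1}$ fits in an exact triangle
\[
\tilde{A}_n \xrightarrow{\alpha_n^{n+1}} \tilde{A}_{n+1} \to P_n \to \tilde{A}_n[1]
\]
with $P_n \inOb \Proj_F$. A straightforward induction, invoking the recursive definition $\Proj_F^{\star n+1} = \Proj_F^{\star n} \star \Proj_F$, then yields $\tilde{A}_n \inOb \Proj_F^{\star n}$ for every $n \in \N$.

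Finally, since $A \inOb \gen{F^\lad(\Proj\Addi)} = \gen{\Proj_{\Ideal_F}}$, the equivalence of conditions (1) and~(2) in Theorem~\ref{the:cellular_limit_tower} furnishes an isomorphism $A \cong \hoinjlim (\tilde{A}_n, \alpha_n^{n+1})$, exhibiting~$A$ as the homotopy colimit of an inductive system of the required kind. The only genuinely new step is the opening observation that $\Ideal_F$\nb-projective resolutions can be constructed using $\Proj_F$ directly rather than its closure under direct summands; everything else is an immediate application of results already established in the paper.
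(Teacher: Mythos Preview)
Your proof is correct and follows essentially the same route as the paper's: choose an \(\Ideal_F\)\nb-projective resolution with entries in~\(\Proj_F\) (not merely in its closure under retracts), embed it in a phantom castle so that \(\tilde{A}_n\inOb\Proj_F^{\star n}\), and then invoke Theorem~\ref{the:cellular_limit_tower}. You have simply filled in the details the paper leaves implicit, in particular the adjunction argument producing the \(\Ideal_F\)\nb-epic map \(F^\lad(Q)\to B\) and the induction showing \(\tilde{A}_n\inOb\Proj_F^{\star n}\).
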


\begin{proof}
  This follows from Theorem~\ref{the:cellular_limit_tower}.
  But an extra observation is needed here because we do not
  adjoin direct summands of objects in the definition of the
  subcategories~\(\Proj_F^{\star n}\), so that they do not
  necessarily contain all \(\Ideal_F^n\)\nb-projective objects.

  There is an \(\Ideal_F\)\nb-projective resolution with
  entries in~\(\Proj_F\), which we embed in a phantom castle.
  The resulting cellular approximation tower satisfies
  \(\tilde{A}_n\inOb\Proj_F^{\star n}\), so that
  Theorem~\ref{the:cellular_limit_tower} yields an inductive
  system of the required form.
\end{proof}

We have considered two cases above: homological and exact
functors.  For homological functors with values in the category
of Abelian groups, our results were obtained previously by
Apostolos Beligiannis~\cite{Beligiannis:Relative}.  Let
\(F\colon \Tri\to\Ab^\Z\) be a stable homological functor that
commutes with direct sums.  Suppose that~\(F\) is defined on
sufficiently many objects.  Then there must be a surjective map
\(A\prto \Z\) for which \(F^\lad(A)\) is defined.
Since~\(\Z\) is projective, \(\Z\) is a retract of~\(A\).
Since we assume~\(\Tri\) to have direct sums, idempotent
morphisms in~\(\Tri\) have range objects.
Thus~\(F^\lad(\Z)\) is defined as well.  By definition,
\(F^\lad(\Z)\) is a representing object for~\(F\).
Conversely, if~\(F\) is representable, then \(F^\lad\) can
be defined on all free Abelian groups.  Hence the
adjoint~\(F^\lad\) is defined on sufficiently many objects
if and only if~\(F\) is representable.  Furthermore, we can
take~\(\Proj_F\) to be the set of all direct sums of translated
copies of the representing object \(F^\lad(\Z)\).  The
assumption that~\(F\) commute with direct sums means that
\(F^\lad(\Z)\) is a compact object.

Summing up, if~\(F\) is a stable homological functor
to~\(\Ab^\Z\), then our methods apply if and only if
\(F(A)\cong \Tri_*(D,A)\) for a compact generator~\(D\)
of~\(\Tri\).  This situation is considered already
in~\cite{Beligiannis:Relative}.

\section{The ABC spectral sequence}
\label{sec:ABC_spectral_sequence}

When we apply a homological or cohomological functor to the
phantom tower, we get first an exact couple and then a spectral
sequence.  We call it the ABC spectral sequence after Adams,
Brinkmann, and Christensen.  Its second page only involves
derived functors.  The higher pages can be described in terms
of the phantom tower, but are more complicated.  It is
remarkable that the ABC spectral sequence is well-defined and
functorial on the level of triangulated categories, that is,
all the higher boundary maps are uniquely determined and
functorial without introducing finer structure like, say, model
categories.

Several results in this section are already know to the experts
or can be extracted from \cites{Beligiannis:Relative,
  Boardman:Conditionally, Christensen:Ideals}.  We have
included them, nevertheless, to give a reasonably
self-contained account.

\subsection{A spectral sequence from the phantom tower}
\label{sec:ABC_phantom}

We are going to construct exact couples out of the phantom
tower, extending results of Daniel
Christensen~\cite{Christensen:Ideals}.  We fix \(A\inOb\Tri\)
and a phantom tower~\eqref{eq:phantom_tower} over~\(A\).  In
addition, we let
\[
P_n\defeq 0,
\qquad
N_n\defeq A,
\quad\text{and}\quad
\iota_n^{n+1}\defeq \ID_A
\]
for \(n<0\).  Thus the triangles
\begin{equation}
  \label{eq:tri_PNN}
  P_n \xrightarrow{\pi_n}
  N_n \xrightarrow{\iota_n^{n+1}}
  N_{n+1} \xrightarrow{\epsilon_n}
  P_n[1]
\end{equation}
are exact for all \(n\in\Z\).  Of course, \(\iota_n^{n+1}\)
rarely belongs to~\(\Ideal\) for \(n<0\).

Let \(F\colon \Tri\to\Ab\) be a homological functor.  Define
bigraded Abelian groups
\begin{alignat*}{2}
  D &\defeq \sum_{p,q\in\Z} D_{pq},
  &\qquad
  D_{pq} &\defeq F_{p+q+1}(N_{p+1}),\\
  E &\defeq \sum_{p,q\in\Z} E_{pq},
  &\qquad
  E_{pq} &\defeq F_{p+q}(P_p),
\end{alignat*}
and homogeneous group homomorphisms
\[
\begin{gathered}
  \xymatrix@C=1em{
    D \ar[rr]^{i} & & D \ar[dl]^{j} \\ & E \ar[ul]^{k}
  }
\end{gathered}
\qquad
\begin{alignedat}{3}
  i_{pq}&\defeq (\iota_{p+1}^{p+2})_*\colon&
  D_{p,q} &\to D_{p+1,q-1},
  &\qquad \deg i &= (1,-1),
  \\
  j_{pq}&\defeq (\epsilon_p)_*\colon&
  D_{p,q} &\to E_{p,q},
  &\qquad \deg j &= (0,0),
  \\
  k_{pq}&\defeq (\pi_p)_*\colon&
  E_{p,q} &\to D_{p-1,q},
  &\qquad \deg k &= (-1,0).
\end{alignedat}
\]

Since~\(F\) is homological and the triangles~\eqref{eq:tri_PNN}
are exact, the chain complexes
\[\xymatrix{
  \cdots \ar[r] &
  F_m(P_n) \ar[r]^-{\pi_{n*}} &
  F_m(N_n) \ar[r]^-{\iota_{n*}^{n+1}} &
  F_m(N_{n+1}) \ar[r]^-{\epsilon_{n*}} &
  F_{m-1}(P_n) \ar[r] & \cdots
}
\]
are exact for all \(m\in\Z\).  Hence the data \((D,E,i,j,k)\)
above is an exact couple (see \cite{MacLane:Homology}*{Section
  XI.5}).

We briefly recall how an exact couple yields a spectral
sequence, see also \cite{MacLane:Homology}*{page 336--337}
or~\cite{Boardman:Conditionally}.  The first step is to form
\emph{derived exact couples}.  Let
\[
D^r\defeq i^{r-1}(D) \subseteq D,
\qquad
E^r\defeq k^{-1}(D^{r}) \bigm/ j(\ker i^{r-1}),
\]
for all \(r\ge1\).  Let \(i^{(r)}\colon D^r\to D^r\) be the
restriction of~\(i\); let \(k^{(r)}\colon E^r\to D^r\) be
induced by \(k\colon E\to D\); and let \(j^{(r)}\colon D^r\to
E^r\) be induced by the relation \(j\circ i^{1-r}\).  It is
shown in~\cite{MacLane:Homology} that \(E^{r+1}\cong
H(E^r,d^{(r)})\) for all \(r\in\N\), where
\(d^{(r)}=j^{(r)}k^{(r)}\); the map~\(d^{(r)}\) has bidegree
\((-r,r-1)\).  We call this spectral sequence the
\emph{ABC spectral sequence} for \(F\) and~\(A\).

We are going to describe the derived exact couples explicitly.
First, we claim that
\begin{equation}
  \label{eq:derived_homological}
  D_{p-1,q}^{r+1} \cong
  \begin{cases}
    \displaystyle F_{p+q}(A)
    &\text{for \(p\le0\),}\\
    F_{p+q}(A) \bigm/ F_{p+q}:\Ideal^p(A)
    &\text{for \(0\le p\le r\),}\\
    F_{p+q}(N_{p-r}) \bigm/ F_{p+q}:\Ideal^r(N_{p-r})
    &\text{for \(r\le p\).}
  \end{cases}
\end{equation}
By definition, \(D^{r+1}_{p-1,q}\) for \(r\in\N\) is the range
of the map \(i^r\colon D_{p-1-r,q+r}\to D_{p-1,q}\).  This is
the identity map on \(F_{p+q}(A)\) for \(p\le 0\), the map
\(\iota^p_*\colon F_{p+q}(A)\to F_{p+q}(N_p)\) for \(0\le p \le
r\), and the map \((\iota_{p-r}^p)_*\colon F_{p+q}(N_{p-r}) \to
F_{p+q}(N_p)\) for \(r\le p\).  Now recall that the maps
\(\iota_m^n\) are \(\Ideal^{n-m}\)-versal for all \(n\ge
m\ge0\) and use Lemma~\ref{lem:ideal_versal}.

\begin{proposition}
  \label{pro:ABC_spese}
  Let \(1\le r<\infty\).  Then we have \(E^{r+1}_{pq}\cong0\)
  for \(p\le -1\); for \(0\le p\le r\), there is an exact
  sequence
  \begin{multline*}
    0 \to
    \frac{F_{p+q+1}(N_{p})}{F_{p+q+1}:\Ideal^{r+1}(N_{p})}
    \xrightarrow{(\iota_p^{p+1})_*}
    \frac{F_{p+q+1}(N_{p+1})}{F_{p+q+1}:\Ideal^r(N_{p+1})}
    \\ \to E^{r+1}_{pq} \to
    \frac{F_{p+q}:\Ideal^{p+1}(A)}{F_{p+q}:\Ideal^p(A)}
    \to 0;
  \end{multline*}
  for \(p\ge r\), there is an exact sequence
  \begin{multline*}
    0 \to
    \frac{F_{p+q+1}(N_{p})}{F_{p+q+1}:\Ideal^{r+1}(N_{p})}
    \xrightarrow{(\iota_p^{p+1})_*}
    \frac{F_{p+q+1}(N_{p+1})}{F_{p+q+1}:\Ideal^r(N_{p+1})}
    \\ \to E^{r+1}_{pq} \to
    \frac{F_{p+q}:\Ideal^{r+1}(N_{p-r})}{F_{p+q}:\Ideal^r(N_{p-r})}
    \to 0.
  \end{multline*}
  Finally, for \(r=\infty\), let
  \[
  \Bad_{pq} \defeq
  F_q(N_p) \Bigm/ \bigcup_{r\in\N} F_q:\Ideal^r(N_p);
  \]
  then we get \(E^{\infty}_{pq}\cong0\) for \(p\le -1\), and
  exact sequences
  \[
  0 \to \Bad_{p,p+q+1}\to\Bad_{p+1,p+q+1}
  \to E^\infty_{pq}
  \to \frac{F_{p+q}:\Ideal^{p+1}(A)}{F_{p+q}:\Ideal^p(A)}
  \to 0.
  \]
  Therefore, if \(\bigcup_{r\in\N} F_q:\Ideal^r(N_p) =
  F_q(N_p)\) for all \(p\in\N\), then
  \[
  E^\infty_{pq} \cong
  \frac{F_{p+q}:\Ideal^{p+1}(A)}{F_{p+q}:\Ideal^p(A)},
  \]
  that is, the ABC spectral sequence converges towards
  \(F_m(A)\), and the induced increasing filtration on the
  limit is \(\bigl(F_m:\Ideal^r(A)\bigr)_{r\in\N}\).
\end{proposition}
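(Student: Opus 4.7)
The plan is to compute the derived couple directly from the formula \(E^{r+1}_{pq}=k^{-1}(D^{r+1}_{p-1,q})/j(\ker i^r|_{D_{pq}})\) and to translate every occurring kernel or image into a phantom filtration by invoking the versality of the maps \(\iota_m^n\) (Lemmas~\ref{lem:versal_resolution}, \ref{lem:versal_compose}, and~\ref{lem:ideal_versal}). From the nested inclusions \(j(\ker i^r|_{D_{pq}})\subseteq\ker(k_{pq})\subseteq k^{-1}(D^{r+1}_{p-1,q})\) I obtain the natural short exact sequence
\begin{equation*}
0 \to \ker(k_{pq})/j(\ker i^r|_{D_{pq}}) \to E^{r+1}_{pq} \to D^{r+1}_{p-1,q}\cap\range(k_{pq}) \to 0,
\end{equation*}
using that \(k\) induces an isomorphism \(k^{-1}(D^{r+1}_{p-1,q})/\ker(k_{pq})\cong D^{r+1}_{p-1,q}\cap\range(k_{pq})\). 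The case \(p\le -1\) is immediate because \(P_p=0\) forces \(E_{pq}=0\); the remaining cases are obtained by analysing the two ends of this sequence.

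For the subobject I identify each piece by versality. Since \(\iota_{p+1}^{p+r+1}\) is \(\Ideal^r\)-versal, Lemma~\ref{lem:ideal_versal} gives \(\ker i^r|_{D_{pq}} = F_{p+q+1}:\Ideal^r(N_{p+1})\); the exact triangle \(P_p\to N_p\to N_{p+1}\) together with the \(\Ideal\)-versality of \(\iota_p^{p+1}\) gives \(\range(k_{pq}) = F_{p+q}:\Ideal(N_p)\); and the image of \(i\colon D_{p-1,q+1}\to D_{pq}\) is the image of \((\iota_p^{p+1})_*\colon F_{p+q+1}(N_p)\to F_{p+q+1}(N_{p+1})\). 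Because \(j\) descends to an isomorphism \(D_{pq}/\range(i)\xrightarrow{\sim}\ker(k_{pq})\), the subobject becomes the cokernel of the map induced by \((\iota_p^{p+1})_*\) on the quotients. Applying Lemma~\ref{lem:ideal_versal} to the \(\Ideal^{r+1}\)-versal composition \(\iota_p^{p+r+1}=\iota_{p+1}^{p+r+1}\circ\iota_p^{p+1}\) shows that this induced map is injective
\begin{equation*}
F_{p+q+1}(N_p)/F_{p+q+1}:\Ideal^{r+1}(N_p) \hookrightarrow F_{p+q+1}(N_{p+1})/F_{p+q+1}:\Ideal^r(N_{p+1}),
\end{equation*}
producing the left half of the asserted four-term sequence.

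For the quotient I use the identification of \(D^{r+1}_{p-1,q}\) in~\eqref{eq:derived_homological}: it is realised inside \(F_{p+q}(N_p)\) as the image of \((\iota^p)_*\) for \(0\le p\le r\) and of \((\iota_{p-r}^p)_*\) for \(p\ge r\). Intersecting with \(\range(k_{pq})=\ker((\iota_p^{p+1})_*)\) amounts to requiring a lift that is annihilated by \((\iota_p^{p+1})_*\), and Lemma~\ref{lem:ideal_versal} applied to the composite \(\iota_{\max(0,p-r)}^{p+1}\) -- whose versality follows from Lemma~\ref{lem:versal_compose} -- identifies the intersection with \(F_{p+q}:\Ideal^{p+1}(A)/F_{p+q}:\Ideal^p(A)\) when \(0\le p\le r\), and with \(F_{p+q}:\Ideal^{r+1}(N_{p-r})/F_{p+q}:\Ideal^r(N_{p-r})\) when \(p\ge r\). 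Splicing with the subobject computation yields the two displayed exact sequences.

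For \(r=\infty\) I pass to the Boardman limit \(E^\infty=Z^\infty/B^\infty\) with \(Z^\infty=k^{-1}(\bigcap_r\range(i^r))\) and \(B^\infty=j(\bigcup_r\ker i^r)\). Since \((\iota^p)_*\) factors through \((\iota_{p-r}^p)_*\) for every \(r\), the subgroup \(D^\infty_{p-1,q}\) stabilises for \(r\gg p\) to the same image as in the finite case, giving the same quotient term. On the other hand \(\bigcup_r\ker i^r|_{D_{pq}}=\bigcup_r F_{p+q+1}:\Ideal^r(N_{p+1})\) need not exhaust \(F_{p+q+1}(N_{p+1})\), which is exactly why the \(\Bad\) groups intervene; the same factorisation argument identifies the induced map \(\Bad_{p,p+q+1}\to\Bad_{p+1,p+q+1}\) as an injection and the subobject as its cokernel. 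When all the \(\Bad\) groups vanish the sub-part collapses, leaving \(E^\infty_{pq}\cong F_{p+q}:\Ideal^{p+1}(A)/F_{p+q}:\Ideal^p(A)\) and hence the stated convergence and filtration. The main obstacle is bookkeeping across the three ranges of \(p\) and matching subquotients after identification; once one recognises that every relevant kernel or image is controlled by an appropriate composite \(\iota_m^n\) via Lemma~\ref{lem:ideal_versal}, the computation is automatic.
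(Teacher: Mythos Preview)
Your proof is correct and follows essentially the same approach as the paper: both derive a short exact sequence for \(E^{r+1}_{pq}\) from the exact-couple structure (you from the explicit subquotient formula \(k^{-1}(D^{r+1})/j(\ker i^r)\), the paper from the exactness of the derived couple \((D^{r+1},E^{r+1})\)) and then identify every kernel and image via the \(\Ideal^s\)-versality of the maps \(\iota_m^n\) using Lemma~\ref{lem:ideal_versal}. Your version spells out the bookkeeping in more detail and reaches the uniform description of the subobject term a bit more directly, but the underlying argument is the same.
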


\begin{proof}
  We have \(E^{r+1}_{pq}\cong0\) for \(p\le -1\) because
  already \(E_{pq}=E^1_{pq}=0\).  Let \(p\ge0\).  We
  use~\eqref{eq:derived_homological} and the exactness of the
  derived exact couple \((D^{r+1},E^{r+1})\) to
  compute~\(E^{r+1}\) by an extension involving the kernel and
  cokernel of the restriction of~\(i\) to \(D^{r+1}\).
  Since~\(\iota_m^{m+1}\) is \(\Ideal\)\nb-versal, \(x\in
  F(N_m)\) satisfies \((\iota_m^{m+1})_*(x)\in
  F:\Ideal^r(N_{m+1})\) if and only if \(x\in
  F:\Ideal^{r+1}(N_m)\).  Plugging this into the extension that
  describes~\(E^{r+1}\), we get the assertion, at least for
  finite~\(r\).

  The case \(r=\infty\) is similar.  Now
  \begin{equation}
    \label{eq:def_Einfty}
    E^\infty \defeq \bigcap_{r\in\N} k^{-1}(i^r D) \Bigm/
    \bigcup_{r\in\N} j(\ker i^r).
  \end{equation}
  The injectivity of the map
  \(\Bad_{p,p+q+1}\to\Bad_{p+1,p+q+1}\) follows from the
  exactness of colimits of Abelian groups.  Using \(i(D)=\ker
  j\), \(\ker i = k(E)\), and~\eqref{eq:def_Einfty}, we get a
  short exact sequence
  \begin{multline}
    \label{eq:Einfty_extension}
    0\to
    D_{pq} \Bigm/ \Bigl(i(D_{p-1,q+1}) + \bigcup \ker i^r\Bigr)
    \to E^\infty_{pq}
    \\ \to D_{p-1,q}\cap \ker i\cap \bigcap i^r(D) \to 0;
  \end{multline}
  the first map is induced by~\(j\), the second one by~\(k\).

  The intersection \(\bigcap i^r(D)\) is described
  by~\eqref{eq:derived_homological} for \(r=\infty\), so that
  the third case in~\eqref{eq:derived_homological} is missing.
  Hence the quotient in~\eqref{eq:Einfty_extension} is
  \[
  \ker i \cap \bigcap i^r(D)
  \cong F_{p+q}:\Ideal^{p+1}(A)\bigm/ F_{p+q}:\Ideal^p(A).
  \]
  The versality of the maps~\(\iota_m^n\) yields \(\bigcup \ker
  i^r \cap D_{p-1,q} = \bigcup_r F_{p+q}:\Ideal^r(N_p)\).
  Hence the kernel in~\eqref{eq:Einfty_extension} is
  \(\Bad_{p+1,p+q+1}/\Bad_{p,p+q+1}\).  If \(\Bad=0\), then the
  groups \(E^\infty_{pq}\) for \(p+q=m\) are the subquotients
  of the filtration \(F_m:\Ideal^p(A)\) on \(F_m(A)\).
  Moreover, since \(\Bad_{0m} = F_m(A) \Bigm/ \bigcup_{p\in\N}
  F_m:\Ideal^p(A)\), our hypothesis includes the statement that
  \(F_m(A) = \bigcup_{p\in\N} F_m:\Ideal^p(A)\).
\end{proof}

Dual constructions apply to a cohomological functor
\(\contra\colon \Tri^\op\to\Ab\).
Equation~\eqref{eq:tri_PNN} yields a sequence of exact chain
complexes
\[\xymatrix{
  \cdots \ar[r] &
  \contra^{m-1}(P_n) \ar[r]^-{\epsilon_n^*} &
  \contra^m(N_{n+1}) \ar[r]^-{(\iota_n^{n+1})^*} &
  \contra^m(N_n) \ar[r]^-{\pi_n^*} &
  \contra^m(P_n) \ar[r] & \cdots.
}
\]
Therefore, the following defines an exact couple:
\[
\tilde{D}^{pq} \defeq \contra^{p+q+1}(N_{p+1}),
\qquad
\tilde{E}^{pq} \defeq \contra^{p+q}(P_p),
\]
\[
\begin{gathered}
  \xymatrix@C=1em{
    \tilde{D} \ar[rr]^{i}&&
    \tilde{D} \ar[dl]^{j}\\&
    \tilde{E} \ar[ul]^{k}
  }
\end{gathered}
\qquad
\begin{alignedat}{3}
  i^{pq} &\defeq (\iota_p^{p+1})^*\colon&
  \tilde{D}^{p,q} &\to \tilde{D}^{p-1,q+1},
  &\qquad \deg i &= (-1,1),
  \\
  j^{pq} &\defeq \pi_{p+1}^*\colon&
  \tilde{D}^{p,q} &\to \tilde{E}^{p+1,q},
  &\qquad \deg j &= (1,0),
  \\
  k^{pq} &\defeq \epsilon_p^*\colon&
  \tilde{E}^{p,q} &\to \tilde{D}^{p,q},
  &\qquad \deg k &= (0,0).
\end{alignedat}
\]
Again we form derived exact couples
\((\tilde{D}_r,\tilde{E}_r,i_r,j_r,k_r)\), and
\((\tilde{E}_r,d_r)\) with \(d_r\defeq j_rk_r\) is a spectral
sequence.  The map~\(d_r\) has bidegree \((r,1-r)\).  We call
this spectral sequence the \emph{ABC spectral sequence} for
\(\contra\) and~\(A\).

We can describe the derived exact couples as above.  To begin
with,
\begin{equation}
  \label{eq:derived_cohomological}
  \tilde{D}_{r+1}^{p-1,q} =
  \begin{cases}
    \displaystyle \contra^{p+q}(A)
    &\text{for \(p\le0\),}\\
    \displaystyle \Ideal^p \contra^{p+q}(A)
    &\text{for \(0\le p\le r\),}\\
    \displaystyle \Ideal^r \contra^{p+q}(N_{p-r})
    &\text{for \(r\le p\).}
  \end{cases}
\end{equation}

\begin{proposition}
  \label{pro:ABC_spese_dual}
  Let \(1\le r<\infty\).  Then \(\tilde{E}_{r+1}^{pq}\cong0\)
  for \(p\le -1\), and there are exact sequences
  \[
  0 \to
  \frac{\Ideal^{p} \contra^{p+q}(A)}
  {\Ideal^{p+1} \contra^{p+q}(A)} \to
  \tilde{E}_{r+1}^{pq} \to
  \Ideal^{r} \contra^{p+q+1}(N_{p+1})
  \xrightarrow{(\iota_p^{p+1})^*}
  \Ideal^{r+1} \contra^{p+q+1}(N_{p}) \to 0
  \]
  for \(0\le p\le r\) and
  \[
  0 \to
  \frac{\Ideal^r \contra^{p+q}(N_{p-r})}
  {\Ideal^{r+1} \contra^{p+q}(N_{p-r})} \to
  \tilde{E}_{r+1}^{pq} \to
  \Ideal^{r} \contra^{p+q+1}(N_{p+1})
  \xrightarrow{(\iota_p^{p+1})^*}
  \Ideal^{r+1} \contra^{p+q+1}(N_{p}) \to 0
  \]
  for \(p\ge r\).  For \(r=\infty\), let
  \[
  \smash{\widetilde{\Bad}}^{pq} \defeq
  \bigcap_{r\in\N} \Ideal^r \contra^q(N_p),
  \]
  then we get \(E^{\infty}_{pq}\cong0\) for \(p\le -1\), and
  exact sequences
  \[
  0 \to \frac{\Ideal^p\contra_{p+q}(A)}
  {\Ideal^{p+1}\contra_{p+q}(A)}
  \to \tilde{E}_\infty^{pq}
  \\\to \smash{\widetilde{\Bad}}^{p+1,p+q+1}\to
  \smash{\widetilde{\Bad}}^{p,p+q+1}.
  \]
  Therefore, if \(\bigcap_{r\in\N} \Ideal^r
  \contra^q(N_p)=0\) for all \(p,q\), then
  \[
  \tilde{E}_\infty^{pq} \cong
  \frac{\Ideal^p\contra_{p+q}(A)}{\Ideal^{p+1}\contra_{p+q}(A)},
  \]
  that is, the ABC spectral sequence converges towards
  \(\contra^m(A)\), and the induced decreasing filtration on
  the limit is \(\bigl(\Ideal^r\contra^m(A)\bigr)_{r\in\N}\).
\end{proposition}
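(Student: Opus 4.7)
The plan is to dualise the proof of Proposition~\ref{pro:ABC_spese}. From the derived exact couple $(\tilde D_{r+1}, \tilde E_{r+1}, i_{r+1}, j_{r+1}, k_{r+1})$, exactness at position $(p,q)$ in $\tilde E_{r+1}$ produces a short exact sequence
\[
0 \to \operatorname{coker}\bigl(i_{r+1}\colon \tilde D_{r+1}^{p-r,q+r-1} \to \tilde D_{r+1}^{p-r-1,q+r}\bigr) \to \tilde E_{r+1}^{pq} \to \ker\bigl(i_{r+1}\colon \tilde D_{r+1}^{pq} \to \tilde D_{r+1}^{p-1,q+1}\bigr) \to 0,
\]
with bidegrees following from $\deg i=(-1,1)$ and $\deg j_{r+1}=(r+1,-r)$. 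First I would substitute the formulas of~\eqref{eq:derived_cohomological} into each of the four positions. The case split in the proposition ($0\le p\le r$ versus $p\ge r$) records which case of~\eqref{eq:derived_cohomological} governs the source term $\tilde D_{r+1}^{p-r,q+r-1}$: the middle case produces a subquotient of $\contra^{p+q}(A)$, the third a subquotient of $\contra^{p+q}(N_{p-r})$. For $p\le -1$, already $\tilde E^{pq}=0$ at the first page, so $\tilde E_{r+1}^{pq}=0$ as claimed.

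For the kernel term, Lemma~\ref{lem:versal_resolution} says that $\iota_p^{p+1}$ is $\Ideal$\nb-versal, so by the multiplicativity of versality (Lemma~\ref{lem:versal_compose}) any composition of an $\Ideal^r$\nb-map with $\iota_p^{p+1}$ lies in $\Ideal^{r+1}$. Pulling back, $(\iota_p^{p+1})^*$ restricts to a map $\Ideal^r\contra^{p+q+1}(N_{p+1}) \to \Ideal^{r+1}\contra^{p+q+1}(N_p)$, and this is precisely $i_{r+1}$ on $\tilde D_{r+1}^{pq}$; its kernel supplies the rightmost non-trivial term. The cokernel side is handled in the same spirit, using Lemma~\ref{lem:ideal_versal} to identify the image of the restricted $(\iota_{p-r}^{p-r+1})^*$ with the appropriate power of $\Ideal$ applied to either $\contra(A)$ or $\contra(N_{p-r})$.

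For $r=\infty$, I would dualise~\eqref{eq:def_Einfty}, writing
\[
\tilde E_\infty^{pq} \defeq \bigcap_{r\in\N} k^{-1}\bigl(i^r \tilde D\bigr) \Bigm/ \bigcup_{r\in\N} j(\ker i^r),
\]
and extract a four-term sequence analogous to~\eqref{eq:Einfty_extension}. The ``Bad'' term $\smash{\widetilde{\Bad}}^{pq} = \bigcap_{r\in\N} \Ideal^r\contra^q(N_p)$ then emerges naturally as the obstruction at the right end, and its vanishing forces convergence of the spectral sequence to the decreasing filtration $\Ideal^r\contra^m(A)$ on $\contra^m(A)$. The main technical obstacle is the bookkeeping at the overlaps of cases in~\eqref{eq:derived_cohomological}, particularly at $p=0$ and $p=r$, where several formulas must be verified to agree on their common value; the consistency rests on repeated application of Lemma~\ref{lem:versal_compose} and on the identity $\Ideal^\alpha\circ\Ideal^\beta = \Ideal^{\alpha+\beta}$.
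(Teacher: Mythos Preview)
Your proposal is correct and follows exactly the approach the paper takes: its entire proof reads ``This is proved exactly as in the homological case,'' and you have spelled out that dualisation in detail, with the correct short exact sequence coming from the derived exact couple and the right appeal to \(\Ideal\)\nb-versality of \(\iota_p^{p+1}\) (Lemmas~\ref{lem:versal_resolution}--\ref{lem:ideal_versal}). One caution: do not rely too literally on the displayed formula~\eqref{eq:derived_cohomological} at the boundary indices (for instance when \(p-r\le 0<p\)), since that formula is imprecise there; instead compute \(\tilde{D}_{r+1}^{PQ}\) directly as the image of \((\iota_{P+1}^{P+1+r})^*\), which gives the correct identifications \(\Ideal^p\contra^{p+q}(A)\) and \(\Ideal^{p+1}\contra^{p+q}(A)\) needed for the cokernel term.
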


\begin{proof}
  This is proved exactly as in the homological case.
\end{proof}

Notice that we do not claim that the maps
\(\smash{\widetilde{\Bad}}^{p+1,p+q+1}\to
\smash{\widetilde{\Bad}}^{p,p+q+1}\) are surjective.  If~\(G\)
is representable, that is, \(G(A)\cong \Tri(A,B)\) for some
\(B\inOb\Tri\), then the question whether the maps
\(\smash{\widetilde{\Bad}}^{p+1,p+q+1}\to
\smash{\widetilde{\Bad}}^{p,p+q+1}\) are surjective is related
to the question whether
\(\Ideal^\infty\cdot\Ideal=\Ideal^\infty\).  In this case,
\(\smash{\widetilde{\Bad}}^{p,*} = \Ideal^\infty(N_p[*],B)\).
Since the maps~\(\iota_n^{n+1}\) are \(\Ideal\)\nb-versal,
\[
\range \bigl(\smash{\widetilde{\Bad}}^{p+1,*}\to
\smash{\widetilde{\Bad}}^{p,*}\bigr)
= \Ideal^\infty\circ\Ideal(N_p[*],B)
\subseteq \Ideal^\infty(N_p[*],B).
\]

\begin{theorem}
  \label{the:Etwo}
  Starting with the second page, the ABC spectral sequences
  for homological and cohomological functors are independent of
  auxiliary choices and functorial in~\(A\).  Their second
  pages contain the derived functors:
  \[
  E^2_{pq} \cong \Left_pF_q(A),
  \qquad
  \tilde{E}_2^{pq} \cong \Right^p\contra^q(A).
  \]
\end{theorem}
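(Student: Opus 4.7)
The plan is to first compute $d^{(1)}$ directly and identify $E^2$ with the derived functors, then use chain-homotopy uniqueness of projective resolutions to extract independence of auxiliary choices and functoriality.

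For the $E^2$ identification in the homological case, I would compute $d^{(1)}=j\circ k$ explicitly. Tracing the definitions, $k_{pq}$ is induced by $\pi_p$ and $j_{p-1,q}$ by $\epsilon_{p-1}$, so
\[
d^{(1)}_{pq} \;=\; (\epsilon_{p-1}\circ\pi_p)_* \;=\; (\delta_p)_*
\]
by the phantom-tower identity $\delta_p=\epsilon_{p-1}\circ\pi_p$. Since $(P_\bullet,\delta_\bullet,\pi_0)$ is an $\Ideal$\nb-projective resolution of~$A$ by Lemma~\ref{lem:phantom_tower}, the $E^1$-page with its differential is precisely the chain complex obtained by applying $F_*$ to this resolution, and its homology at position~$p$ is $\Left_p F_q(A)$ by the definition of derived functor from~\cite{Meyer-Nest:Homology_in_KK}. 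The cohomological case is entirely dual: the analogous computation gives $d_1^{pq}=(\delta_{p+1})^*$, so $\tilde E_2^{pq}\cong\Right^p\contra^q(A)$.

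For functoriality in $A$ and independence of the choice of phantom tower, Lemma~\ref{lem:phantom_tower} provides, for any $f\colon A\to A'$, a lift to a morphism of phantom towers, and hence morphisms of exact couples and spectral sequences, from~$E^1$ on. Although this lift is non-canonical, I would argue the induced map on~$E^2$ is canonical: any two chain maps between the underlying $\Ideal$\nb-projective resolutions lifting the same~$f$ are chain-homotopic in~$\Tri$ by the standard projectivity argument recalled in~\cite{Meyer-Nest:Homology_in_KK}, and chain-homotopic maps induce the same map on the homology of the $E^1$-page. Taking $f=\ID_A$ with two different phantom towers over the same object then shows independence of auxiliary choices.

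The main obstacle is promoting canonicity from $E^2$ to every higher page~$E^r$ with $r\ge 3$, since the differentials $d^{(r)}$ do involve parts of the exact couple that are not visible at~$E^2$. I would handle this by the standard spectral-sequence fact that filtered chain-homotopic chain maps induce the same morphism on $E^r$ for all $r\ge 2$, proved by inductively lifting the chain homotopy through successive derived couples. An alternative, more in the spirit of the present paper, is to exploit the canonical $\Ideal^\infty$\nb-projective resolution of length one in~\eqref{eq:Ideal_infty_resolution}: it depends only on the $\Proj_\Ideal$\nb-cellular approximation of~$A$, which by Proposition~\ref{pro:cellular_approx} is attached to~$A$ functorially, and thereby produces a canonical model of the spectral sequence from page two on.
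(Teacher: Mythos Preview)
Your identification of~\(E^2\) with the derived functors and your argument for canonicity on~\(E^2\) via chain-homotopy uniqueness of resolutions match the paper's proof essentially word for word.

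Where you diverge is in the passage to higher pages, and here you make it harder than necessary. The paper's argument is this: any lift of~\(f\) to the phantom towers is a morphism of exact couples, hence a morphism of spectral sequences from~\(E^1\) onward. Two different lifts therefore give two morphisms of spectral sequences which agree on~\(E^2\). But \(E^r\) for \(r\ge2\) is a \emph{subquotient} of~\(E^2\), and a morphism of spectral sequences respects exactly this subquotient structure; hence the induced map on each~\(E^r\) is already determined by the map on~\(E^2\). No chain homotopy needs to be lifted through derived couples, and no filtered-complex machinery is required---the spectral sequence here arises from an exact couple, not a filtered complex, so your appeal to ``filtered chain-homotopic chain maps'' would require a translation you do not supply.

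Your alternative route via the \(\Ideal^\infty\)\nb-projective resolution in~\eqref{eq:Ideal_infty_resolution} and Proposition~\ref{pro:cellular_approx} is genuinely problematic: those results assume that~\(\Tri\) has countable direct sums and that~\(\Ideal\) is compatible with them, hypotheses that Theorem~\ref{the:Etwo} does not carry. Using them here would weaken the theorem.
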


\begin{proof}
  We only formulate the proof for homological functors; the
  cohomological case is similar.  The map \(d\defeq jk\colon
  E\to E\) is induced by the maps \(\delta_p\colon P_{p+1}\to
  P_p[1]\) in the phantom tower.  By
  Lemma~\ref{lem:phantom_tower}, these maps form a
  \(\Proj\)\nb-projective resolution of~\(A\).  This together
  with counting of suspensions yields the description
  of~\(E^2_{pq}\).

  Let \(f\colon A\to A'\) be a morphism in~\(\Tri\).  By
  Lemma~\ref{lem:phantom_tower}, it lifts to a morphism between
  the phantom towers over \(A\) and~\(A'\).  This induces a
  morphism of exact couples and hence a morphism of spectral
  sequences.  The maps \(P_n\to P_n'\) form a chain map between
  the \(\Ideal\)\nb-projective resolutions embedded in the
  phantom towers.  This chain map lifting of~\(f\) is unique up
  to chain homotopy (see
  \cite{Meyer-Nest:Homology_in_KK}*{Proposition 3.36}).  Hence
  the induced map on~\(E^2\) is unique and functorial.  We
  get~\(E^r\) for \(r\in\N_{\ge2}\cup\{\infty\}\) as
  subquotients of~\(E^2\).  Since our map on~\(E^2\) is part of
  a morphism of exact couples, it descends to these
  subquotients in a unique and functorial way.  Thus~\(E^r\) is
  functorial for all \(r\ge2\).
\end{proof}

The naturality of the ABC spectral sequence does not mean that
the exact sequences in Propositions \ref{pro:ABC_spese}
and~\ref{pro:ABC_spese_dual} are natural.  They use the exact
couple underlying the ABC spectral sequence, and this exact
couple is not natural.  It is easy to check that the maps
\(E^{r+1}_{pq} \to F_{p+q}:\Ideal^{p+1}(A) \bigm/
F_{p+q}:\Ideal^p(A)\) in Proposition~\ref{pro:ABC_spese} are
canonical for \(0\le p\le r\le\infty\).  But
\(F_{p+q}:\Ideal^{r+1}(N_{p-r}) \bigm/
F_{p+q}:\Ideal^r(N_{p-r})\) depends on auxiliary choices.

Our results so far only \emph{formulate} the convergence
problem for the ABC spectral sequence.  It remains to check
whether the relevant obstructions vanish.  The easy special
case where the projective resolutions have finite length is
already dealt with in~\cite{Christensen:Ideals}.  Recall
that~\(\Proj_\Ideal^n\) denotes the class of
\(\Ideal^n\)\nb-projective objects.

\begin{lemma}
  \label{lem:tower_collapses}
  Let \(k\in\N\) and \(A\inOb\Proj_\Ideal^k\).  Then
  \(\iota_m^{m+k}=0\) and \(N_m\inOb\Proj_\Ideal^k\) for all
  \(m\in\N\).
\end{lemma}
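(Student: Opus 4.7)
The plan is to prove by induction on $m \in \N$ that $N_m$ is $\Ideal^k$-projective; the assertion $\iota_m^{m+k} = 0$ then follows immediately, for $\iota_m^{m+k}$ is a composition of $k$ maps $\iota_j^{j+1} \in \Ideal$ and therefore belongs to $\Ideal^k(N_m, N_{m+k})$, a group that must vanish on an $\Ideal^k$-projective object.

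The base case $m = 0$ is the hypothesis $A = N_0 \in \Proj_\Ideal^k$. For the inductive step it suffices to show that any $g \in \Ideal^k(N_{m+1}, C)$ vanishes, where $C \inOb \Tri$ is arbitrary. Using $\Ideal \circ \Ideal^{k-1} = \Ideal^k$ together with the subgroup property of the product of ideals (established in Section~\ref{sec:powers_intersection_ideal} by a direct-sum trick), I write $g = h \circ f$ as a single composition, with $h \in \Ideal(X, C)$ and $f \in \Ideal^{k-1}(N_{m+1}, X)$ for some object~$X$. The phantom tower's exact triangle~\eqref{eq:tri_PNN} is then played against this decomposition from both sides. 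Precomposing $f$ with $\iota_m^{m+1} \in \Ideal$ gives a map in $\Ideal^{k-1} \circ \Ideal = \Ideal^k(N_m, X)$; by the inductive hypothesis this vanishes, so $f$ factors through the third edge of the triangle as $f = f' \circ \epsilon_m$ for some $f' \colon P_m[1] \to X$. Substituting, $g = (h \circ f') \circ \epsilon_m$, where $h \circ f'$ belongs to $\Ideal(P_m[1], C)$. Since $P_m$ is $\Ideal$-projective and $\Proj_\Ideal$ is closed under suspension (noted just after Definition~\ref{def:Proj_Ideal}), this hom-set is zero, so $g = 0$.

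I do not anticipate any real obstacle. The subtle point is the choice to decompose $\Ideal^k$ as $\Ideal \circ \Ideal^{k-1}$ rather than the symmetric version: the longer factor~$f$ needs the extra room to absorb $\iota_m^{m+1}$ and so trigger the inductive hypothesis on $N_m$, while the shorter factor~$h$ then leaves only a single $\Ideal$-power to be killed against the ordinary $\Ideal$-projectivity of $P_m[1]$.
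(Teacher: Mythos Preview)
Your proof is correct and follows essentially the same inductive strategy as the paper: both factor a map through~\(\epsilon_m\) using the inductive hypothesis on~\(N_m\), then kill the remaining \(\Ideal\)-factor against the \(\Ideal\)-projectivity of~\(P_m[1]\). The only difference is cosmetic: the paper inducts on the vanishing of the specific versal map~\(\iota_m^{m+k}\) rather than on \(\Ideal^k\)-projectivity of~\(N_m\), so the decomposition \(\iota_{m+1}^{m+k+1} = \iota_{m+k}^{m+k+1}\circ\iota_{m+1}^{m+k}\) is already given and no direct-sum trick is needed.
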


\begin{proof}
  Since \(\iota_m^{m+k}\) is \(\Ideal^k\)\nb-versal, we have
  \(\iota_m^{m+k}=0\) if and only if
  \(N_m\inOb\Proj_\Ideal^k\).  We prove \(\iota_m^{m+k}=0\) by
  induction on~\(m\).  The case \(m=0\) is clear because
  \(N_0=A\).  If \(\iota_m^{m+k}=0\), then
  \(\iota_{m+1}^{m+k}\) factors through the map
  \(\epsilon_m\colon N_{m+1}\to P_m[1]\) by the long exact
  homology sequence for the triangles~\eqref{eq:tri_PNN}.  If
  we compose the resulting map \(P_m[1]\to N_{m+k}\) with
  \(\iota_{m+k}^{m+k+1}\in\Ideal\), we get zero because
  \(P_m[1]\inOb\Proj\).  Thus \(\iota_{m+1}^{m+k+1}\) vanishes
  as well.
\end{proof}

\begin{proposition}
  \label{pro:Ct_for_projective}
  Let \(F\colon \Tri\to\Ab\) be a homological functor and let
  \(m\in\N\).

  If \(A\inOb\Proj_\Ideal^{m+1}\), then the ABC spectral
  sequence for \(F\) and~\(A\) collapses at~\(E^{m+2}\) and
  converges towards \(F_*(A)\), and its limiting page
  \(E^\infty=E^{m+2}\) is supported in the region \(0\le p\le
  m\).

  If, instead, \(A\) has a \(\Proj\)\nb-projective resolution
  of length~\(m\), then the ABC spectral sequence for \(F\)
  and~\(A\) is supported in the region \(0\le p\le m\) from the
  second page onward, so that it collapses at~\(E^{m+1}\).  If,
  in addition, \(A\) belongs to the localising subcategory
  of~\(\Tri\) generated by~\(\Proj_\Ideal\), then the spectral
  sequence converges towards \(F_*(A)\).

  Similar assertions hold in the cohomological case.
\end{proposition}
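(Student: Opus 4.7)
The plan is to feed Lemma~\ref{lem:tower_collapses} and the identification of $E^2$ from Theorem~\ref{the:Etwo} into the explicit formulas for the $E^{r+1}$\nb-page in Proposition~\ref{pro:ABC_spese}. The cohomological case will run in parallel using Proposition~\ref{pro:ABC_spese_dual}, with the convergence condition $\bigcup_r F:\Ideal^r(N_p) = F(N_p)$ replaced by its decreasing-filtration dual $\bigcap_r \Ideal^r \contra(N_p) = 0$.

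For the first assertion, I would use Lemma~\ref{lem:tower_collapses} to obtain $N_p \inOb \Proj_\Ideal^{m+1}$ for every $p$, so that $\Ideal^{m+1}(N_p, B) = 0$ for all $B \inOb \Tri$; Lemma~\ref{lem:ideal_versal} then gives $F:\Ideal^{m+1}(N_p) = F(N_p)$. Substituting $r = m+1$ into the exact sequences of Proposition~\ref{pro:ABC_spese}, the two middle quotients vanish for every $p$, and the outer quotient $F_{p+q}:\Ideal^{p+1}(A)/F_{p+q}:\Ideal^p(A)$ vanishes outside $0 \le p \le m$. This identifies $E^{m+2}$ with the associated graded of the $\Ideal$\nb-adic filtration on $F_*(A)$, supported in $0 \le p \le m$; the convergence hypothesis of Proposition~\ref{pro:ABC_spese} is immediate from $F:\Ideal^{m+1}(N_p) = F(N_p)$. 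For $r \ge m+2$, the differential $d^{(r)}$ of bidegree $(-r, r-1)$ sends the entire support into a column where $E^1$ already vanishes, so $E^\infty = E^{m+2}$.

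For the second assertion, a $\Proj_\Ideal$\nb-projective resolution of length~$m$ embeds into a phantom tower with $P_p = 0$ for $p > m$. Theorem~\ref{the:Etwo} yields $E^2_{pq} \cong \Left_p F_q(A) = 0$ for $p > m$, so $E^2$ lives in $0 \le p \le m$, and a bidegree count for $r \ge m+1$ gives collapse at $E^{m+1}$. For convergence under the extra hypothesis $A \inOb \gen{\Proj_\Ideal}$, the exact triangle~\eqref{eq:tri_PNN} with $n = m+1$ reads $0 \to N_{m+1} \to N_{m+2} \to 0$, so the $\Ideal$\nb-phantom map $\iota_{m+1}^{m+2}$ is an isomorphism; composing with its inverse exhibits $\ID_{N_{m+1}}$ as a member of $\Ideal$, so $N_{m+1} \inOb \Null_\Ideal$. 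Lemma~\ref{lem:approx_tower_projective} gives $\tilde{A}_{m+1} \inOb \Proj_\Ideal^{m+1} \subseteq \gen{\Proj_\Ideal}$, so~\eqref{eq:tri_AAN} at $n = m+1$ is a complementary decomposition of~$A$. Its uniqueness then forces $\tilde{A}_{m+1} \cong A$ and $N_{m+1} = 0$, so $A \inOb \Proj_\Ideal^{m+1}$ and convergence reduces to the first assertion.

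The main subtlety lies in recognising~\eqref{eq:tri_AAN} at $n = m+1$ as the complementary decomposition of~$A$ under the second set of hypotheses; this rests on the observation that $P_{m+1} = 0$ forces $\iota_{m+1}^{m+2}$ to be an $\Ideal$\nb-phantom isomorphism, hence its source to be $\Ideal$\nb-contractible. Everything else is routine bookkeeping with the $E^{r+1}$\nb-formulas in Propositions~\ref{pro:ABC_spese} and~\ref{pro:ABC_spese_dual}.
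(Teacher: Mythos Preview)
Your proof is correct and, for the first two assertions, follows essentially the same route as the paper: Lemma~\ref{lem:tower_collapses} plus Proposition~\ref{pro:ABC_spese} for the first, and Theorem~\ref{the:Etwo} plus a bidegree count for the second.

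For the convergence step under the hypothesis \(A\inOb\gen{\Proj_\Ideal}\), you and the paper diverge slightly. The paper argues directly that \(\Tri_*(D,N_p)=0\) for all \(D\inOb\gen{\Proj_\Ideal}\) and \(p>m\) by first checking this for \(D\inOb\Proj_\Ideal\) via the short exact sequences coming from the tower, then extending to the localising subcategory, and finally specialising to \(D=N_p\) to force \(N_p=0\). You instead observe that \(P_{m+1}=0\) makes \(\iota_{m+1}^{m+2}\) an \(\Ideal\)\nb-phantom isomorphism, hence \(N_{m+1}\inOb\Null_\Ideal\); together with \(\tilde{A}_{m+1}\inOb\gen{\Proj_\Ideal}\) and \(A\inOb\gen{\Proj_\Ideal}\), the uniqueness of the \((\gen{\Proj_\Ideal},\Null_\Ideal)\)-decomposition forces \(N_{m+1}=0\), reducing to the first assertion. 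This is a legitimate and arguably more conceptual shortcut; the uniqueness you invoke follows from orthogonality alone (established in Section~\ref{sec:complementary} without any compatibility hypothesis on~\(\Ideal\)), so you are not smuggling in the direct-sum hypothesis of Theorem~\ref{the:ideal_sum_complementary}. The paper's argument is more self-contained in that it does not appeal to the complementary-pair formalism, while yours makes the link to that formalism explicit and avoids the bootstrap through all the~\(N_n\).
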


\begin{proof}
  We only write down the argument in the homological case.  If
  \(A\inOb\Proj_\Ideal^{m+1}\), then
  \(N_p\inOb\Proj_\Ideal^{m+1}\) for all \(p\in\N\) by
  Lemma~\ref{lem:tower_collapses}.  Therefore,
  \(F:\Ideal^r(N_p)=F(N_p)\) for all \(r\ge m+1\).  Plugging
  this into Proposition~\ref{pro:ABC_spese}, we get
  \(E^r_{pq}=0\) for \(r\ge m+2\) and \(p\ge m+1\).  This
  forces the boundary maps~\(d^r\) to vanish for \(r\ge m+2\),
  so that \(E^\infty_{pq}=E^{m+2}_{pq}\).

  Suppose now that~\(A\) has a projective resolution of
  length~\(m\).  Embed such a resolution in a phantom tower by
  Lemma~\ref{lem:phantom_tower}, so that \(P_p=0\) and
  \(N_p\cong N_{p+1}\) for \(p>m\).  Then~\(E^r\) is supported
  in the region \(0\le p\le m\) for all \(r\ge1\).  For
  \(r\ge2\), this holds for any choice of phantom tower by
  Theorem~\ref{the:Etwo}.  As a consequence, \(d^r=0\) for
  \(r\ge m+1\) and hence \(E^\infty=E^{m+1}\).

  Suppose, in addition, that~\(A\) belongs to the localising
  subcategory of~\(\Tri\) generated by~\(\Proj_\Ideal\).  We
  claim that \(N_p\cong0\) for \(p>m\).  This implies that the
  ABC spectral sequence converges towards \(F_*(A)\).  If
  \(D\in\Proj_\Ideal\), then there are exact sequences
  \[
  \Tri_{*+1}(D,N_p) \into \Tri_*(D,P_{p-1}) \prto
  \Tri_*(D,N_{p-1})
  \]
  for all \(p\in\N_{\ge1}\) (see the proof of
  Lemma~\ref{lem:phantom_tower}).  Therefore,
  \(\Tri_*(D,N_p)=0\) for \(p>m\) if \(D\inOb\Proj_\Ideal\).
  The class of objects~\(D\) with this property is localising,
  that is, closed under suspensions, direct sums, direct
  summands, and exact triangles.  Hence it contains the
  localising subcategory generated by~\(\Proj_\Ideal\).  This
  includes \(A=N_0\) by assumption.  Since it contains
  all~\(P_n\) as well, it contains~\(N_n\) for all \(n\in\N\)
  because of the exact triangles in the phantom tower, so that
  \(\Tri_*(N_n,N_p)\) vanishes for all \(n\in\N\).  Thus
  \(\Tri_*(N_p,N_p)=0\), forcing \(N_p=0\).
\end{proof}

If~\(A\) belongs to the localising subcategory generated by the
\(\Ideal\)\nb-projective objects, then the existence of a
projective resolution of length~\(n\) implies that~\(A\) is
\(\Ideal^n\)\nb-projective.  This fails without an additional
hypothesis because \(\Ideal\)\nb-contractible objects have
projective resolutions of length~\(0\).

The converse assertion is usually far from true
(see~\cite{Christensen:Ideals}).
Proposition~\ref{pro:product_class} shows that an object~\(A\)
of~\(\Tri\) is \(\Ideal^2\)\nb-projective if and only if there
is an exact triangle \(P_2\to P_1\to A\to P_2[1]\) with
\(\Ideal\)\nb-projective objects \(P_1\) and~\(P_2\).  The
resulting chain complex \(0\to P_2\to P_1\to A\) is an
\(\Ideal\)\nb-projective resolution if and only if the map
\(A\to P_2[1]\) is an \(\Ideal\)\nb-phantom map, if and only if
the map \(P_2\to P_1\) is \(\Ideal\)\nb-monic.  But this need
not be the case in general.

Recall that the derived functors of the contravariant functor
\(A\mapsto \Tri_*(A,B)\) are the \emph{extension groups}
\(\Ext^*_{\Tri,\Ideal}(A,B)\).  These agree with extension
groups in the Abelian approximation, that is, the target
category of the universal \(\Ideal\)\nb-exact stable
homological functor.  In particular,
\(\Ext^0_{\Tri,\Ideal}(A,B)\) is the space of morphisms between
the images of \(A\) and~\(B\) in the Abelian approximation
(see~\cite{Meyer-Nest:Homology_in_KK}).  Theorem~\ref{the:Etwo}
and Proposition~\ref{pro:ABC_spese_dual} yield exact sequences
\begin{equation}
  \label{eq:Ext_0_exact}
  0 \to \frac{\Tri(A,B)}{\Ideal(A,B)}
  \to \Ext^0_{\Tri,\Ideal}(A,B)
  \to \Ideal(N_1[-1],B)
  \xrightarrow{(\iota_0^1)^*} \Ideal^2(A[-1],B) \to 0
\end{equation}
and
\begin{equation}
  \label{eq:Ext_1_exact}
  0 \to
  \frac{\Ideal(A,B)}{\Ideal^2(A,B)}
  \to \Ext^1_{\Tri,\Ideal}(A,B)
  \to \Ideal(N_2[-1],B)
  \xrightarrow{(\iota_1^2)^*} \Ideal^2(N_1[-1],B) \to 0.
\end{equation}
In particular, we get injective maps
\begin{align*}
  \Tri/\Ideal(A,B) &\to \Ext^0_{\Tri,\Ideal}(A,B),\\
  \Ideal/\Ideal^2(A,B) &\to \Ext^1_{\Tri,\Ideal}(A,B).
\end{align*}
But these maps need not be surjective.  What they do is easy to
understand.  The first map is simply the functor from~\(\Tri\)
to the Abelian approximation.  The second map embeds a morphism
in~\(\Ideal\) in an exact triangle.  This triangle is
\(\Ideal\)\nb-exact because it involves a phantom map, and
hence provides an extension in the Abelian approximation.

The higher quotients \(\Ideal^n/\Ideal^{n+1}(A,B)\) are also
related to \(\Ext^n_{\Tri,\Ideal}(A,B)\), but this is merely a
relation in the formal sense, that is, it is no longer a map.
To construct this relation, we use the
\(\Ideal^n\)\nb-versality of the map \(\iota^n\colon A\to
N_n\).  Thus any map \(f\in\Ideal^n(A,B)\) factors through a
map \(\hat{f}\colon N_n\to B\), and we can choose
\(\hat{f}\in\Ideal(N_n,B)\) if \(f\in\Ideal^{n+1}(A,B)\).  Now
we use the map
\[
\Tri/\Ideal(N_n,B) \to \Ext^n_{\Tri,\Ideal}(A,B)
\]
provided by Theorem~\ref{the:Etwo} and
Proposition~\ref{pro:ABC_spese_dual}.  Since~\(f\) does not
determine the class of~\(\hat{f}\) in \(\Tri/\Ideal(N_n,B)\)
uniquely, we only get a relation.  The ambiguity in this
relation disappears on the \(n\)th page of the ABC spectral
sequence by Proposition~\ref{pro:ABC_spese_dual}.

Once we have chosen~\(\hat{f}\) as above, we can also extend it
to a morphism between the phantom towers of \(A\) and~\(B\)
that shifts degrees down by~\(n\) -- the extension to the left
of~\(N_n\) is induced by \(\hat{f}\circ\iota_m^n\colon N_m\to
B\) for \(m<n\) and vanishes on~\(P_m\) for \(m<n\).  Thus we
get a morphism between the ABC spectral sequences for \(A\)
and~\(B\) for any homological or cohomological
functor -- shifting degrees down by~\(n\), of course.

\subsection{An equivalent exact couple}
\label{sec:phantom_castle_couple}

The cellular approximation tower produces a spectral sequence
in the same way as the phantom tower.

We extend the phantom tower to \(n<0\) by \(\tilde{A}_n=0\) and
\(P_n=0\) for all \(n<0\).  The triangles~\eqref{eq:tri_AAP}
are exact for all \(n\in\Z\).  A homological functor~\(F\) maps
these exact triangles to exact chain complexes
\[
\xymatrix{
  \cdots \ar[r] &
  F_m(\tilde{A}_n) \ar[r]^-{\alpha_{n*}^{n+1}} &
  F_m(\tilde{A}_{n+1}) \ar[r]^-{\sigma_{n*}} &
  F_m(P_n) \ar[r]^-{\kappa_{n*}} &
  F_{m-1}(\tilde{A}_n) \ar[r] &
  \cdots.
}
\]
As above, these amount to an exact couple
\[
\begin{gathered}
  \xymatrix@C=1em{
    D' \ar[rr]^{i'}&& D' \ar[dl]^{j'}\\& E' \ar[ul]^{k'}
  }
\end{gathered}
\qquad
\begin{aligned}
  D'_{pq} &\defeq F_{p+q}(\tilde{A}_{p+1}),\\
  E'_{pq} &\defeq F_{p+q}(P_p),
\end{aligned}
\qquad
\begin{alignedat}{2}
  i'_{pq}&\defeq
  (\alpha_{p+1}^{p+2})_*\colon& D'_{p,q} &\to D'_{p+1,q-1},\\
  j'_{pq}&\defeq (\sigma_p)_*\colon& D'_{p,q} &\to E'_{p,q},\\
  k'_{pq}&\defeq (\kappa_p)_*\colon& E'_{p,q} &\to D'_{p-1,q}.
\end{alignedat}
\]

Part of the commuting diagram~\eqref{eq:octahedral_relations}
asserts that the identity maps \(E\to E'\) and the maps
\(\gamma_{p+1,*}\colon D_{pq}\to D'_{pq}\) form a morphism of
exact couples between the exact couples from the phantom tower
and the cellular approximation tower.  This induces a morphism
between the resulting spectral sequences.  Since this morphism
acts identically on~\(E^1\), the induced morphisms on~\(E^r\)
must be invertible for all \(r\in\N_{\ge1}\cup\{\infty\}\).
Hence our new spectral sequence is isomorphic to the ABC
spectral sequence.

Although the spectral sequences are isomorphic, the underlying
exact couples are different and thus provide isomorphic but
different descriptions of~\(E^\infty\).

An important difference between the two exact couples is that
\(D'_{pq}=0\) for \(p\le0\).  Hence any element of \(D'_{pq}\)
is annihilated by a sufficiently high power of~\(i\).
Therefore, the kernel in~\eqref{eq:Einfty_extension} vanishes
and
\[
E^\infty_{pq}
\cong D'_{p-1,q} \cap \ker i' \cap \bigcap_{r\in\N} (i')^r(D').
\]
Let
\[
L_{pq} \defeq
\varinjlim \range\bigl(\alpha_{p*}^r\colon
F_q(\tilde{A}_p) \to F_q(\tilde{A}_r)\bigr);
\]
these spaces define an increasing filtration
\((F_{pq})_{p\in\N}\) on \(\varinjlim F_q(\tilde{A}_r)\) -- we
form the limit with the maps \(\alpha_m^n\).
Using~\eqref{eq:Einfty_extension} and the exactness of colimits
of Abelian groups, we get isomorphisms
\[
E^\infty_{pq}\cong \frac{L_{p+1,p+q}}{L_{p,p+q}}.
\]
Hence \(E^\infty_{pq}\) converges towards \(\varinjlim
F_q(\tilde{A}_r)\) and induces the filtration \((L_{p,p+q})\)
on its limit -- without any assumption on the ideal or the
homological functor.

In the cohomological case, the exact
triangles~\eqref{eq:tri_AAP} yield an exact couple as well, and
the morphism between the two exact couples from the cellular
approximation tower and the phantom tower induces an
isomorphism between the associated spectral sequences.  Again,
we get a new description of~\(\tilde{E}_\infty\).

But the result is not as simple as in the homological case
because the projective limit functor for Abelian groups is not
exact.  Let
\[
\tilde{L}^{pq} \defeq
\bigcap_{r\ge p} \range\bigl(\alpha_p^{r*}\colon
\contra^q(\tilde{A}_r) \to \contra^q(\tilde{A}_p)\bigr).
\]
Then
\[
\tilde{E}_\infty^{pq} \cong
\ker\bigl( \tilde{L}^{p+1,p+q} \to \tilde{L}^{p,p+q}\bigr).
\]
In general, we cannot say much more than this.  If
\(\tilde{L}^{pq}\) is the range of the map \(\varprojlim_r
\contra^q(\tilde{A}_r)\to \contra^q(\tilde{A}_p)\) for all
\(p\) and~\(q\), then the ABC spectral sequence converges
towards \(\varprojlim_r \contra^q(\tilde{A}_r)\) and induces on
this limit the decreasing filtration by the subspaces
\(\ker\bigl(\varprojlim_r \contra^q(\tilde{A}_r) \to
\contra^q(\tilde{A}_p)\bigr)\) for \(p\in\N\).

\section{Convergence of the ABC spectral sequence}
\label{sec:convergence_ABC}

There is an obvious obstruction to the convergence of the ABC
spectral sequence: the subcategory~\(\Null_\Ideal\) of
\(\Ideal\)\nb-contractible objects.  Since
\(\Ideal\)\nb-derived functors vanish on~\(\Null_\Ideal\), the
spectral sequence cannot converge towards the original functor
unless it vanishes on~\(\Null_\Ideal\) as well.  At best, the
ABC spectral sequence may converge to the \emph{localisation}
of the given functor at~\(\Null_\Ideal\).  We show that this is
indeed the case for homological functors that commute with
direct sums, provided the ideal~\(\Ideal\) is compatible with
direct sums.  The situation for cohomological functors is less
satisfactory because the projective limit functor for Abelian
groups is not exact.

We continue to assume throughout this section that the
category~\(\Tri\) has countable direct sums.  Various notions
of convergence of spectral sequences are discussed
in~\cite{Boardman:Conditionally}.  The following results deal
only with strong convergence.

\begin{theorem}
  \label{the:homological_limit}
  Let~\(\Ideal\) be a homological ideal compatible with direct
  sums in a triangulated category~\(\Tri\); let \(F\colon
  \Tri\to\Ab\) be a homological functor that commutes with
  countable direct sums, and let \(A\inOb\Tri\); let \(\Left
  F\) be the localisation of~\(F\) at~\(\Null_\Ideal\).  Then
  \[
  F:\Ideal^{2\infty}(A)
  = F:\Ideal^\infty(A)
  = \bigcup_{r\in\N} F:\Ideal^r(A)
  = \range\bigl(\Left F(A)\to F(A)\bigr),
  \]
  and the ABC spectral sequence for \(F\) and~\(A\) converges
  towards \(\Left F(A)\) with the filtration \(\bigl(\Left
  F:\Ideal^k(A)\bigr)_{k\in\N}\).  We have \(\bigcup_{k\in\N}
  \Left F:\Ideal^k(A) = \Left F(A)\).
\end{theorem}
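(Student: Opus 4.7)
The plan is to exploit the complementary pair $(\gen{\Proj_\Ideal},\Null_\Ideal)$ provided by Theorem~\ref{the:ideal_sum_complementary}.  Apply~$F$ to the exact triangle $L(A)\to A\to N(A)\to L(A)[1]$, recalling from the proof of Theorem~\ref{the:ideal_sum_complementary} that~$N(A)$ is the homotopy colimit of the phantom tower over~$A$.  Since~$F$ commutes with countable direct sums and since $\ID-S$ on a direct sum of Abelian groups is always injective with cokernel the direct limit, the long exact sequence for the defining triangle $\bigoplus N_n\xrightarrow{\ID-S}\bigoplus N_n\to N(A)$ collapses to $F_*(N(A))\cong\varinjlim F_*(N_n)$.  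Exactness of $\Left F(A)\to F(A)\to F(N(A))$ then yields
\[
\range(\Left F(A)\to F(A))=\ker(F(A)\to\varinjlim F(N_n));
\]
by the $\Ideal^n$-versality of $\iota^n\colon A\to N_n$ together with Lemma~\ref{lem:ideal_versal}, this kernel equals $\bigcup_n F:\Ideal^n(A)$, giving the rightmost equality.

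For the equality of the infinite-power terms, let $\iota^\infty\colon A\to N(A)$ denote the canonical map to the homotopy colimit.  Since $\iota^\infty$ factors through each $\iota^n$, it lies in~$\Ideal^n$ for every~$n$ and hence in~$\Ideal^\infty$.  Moreover, $N(A)\inOb\Null_\Ideal$ gives $\ID_{N(A)}\in\Ideal$, which by iterated composition puts $\ID_{N(A)}\in\Ideal^\infty$; therefore $\iota^\infty=\ID_{N(A)}\circ\iota^\infty\in\Ideal^\infty\circ\Ideal^\infty=\Ideal^{2\infty}$.  Any $x\in F:\Ideal^{2\infty}(A)$ then satisfies $F(\iota^\infty)(x)=0$, which by the previous paragraph places~$x$ in $\bigcup_n F:\Ideal^n(A)$.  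Combined with the trivial inclusions $\bigcup_n F:\Ideal^n\subseteq F:\Ideal^\infty\subseteq F:\Ideal^{2\infty}$, this yields all four equalities.

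For convergence of the spectral sequence, pass to the phantom tower of $L(A)$, which by Proposition~\ref{pro:localise_phantom_castle} is~$L$ applied to the phantom tower of~$A$.  Applying $F$ to this tower gives the same exact couple, and hence the same ABC spectral sequence, as applying $\Left F=F\circ L$ to the phantom tower of~$A$.  For each $p\in\N$, applying the first two paragraphs to $L(N_p)\inOb\gen{\Proj_\Ideal}$ gives $\bigcup_r F:\Ideal^r(L(N_p))=F(L(N_p))$, because $L(Y)=Y$ for $Y\inOb\gen{\Proj_\Ideal}$ makes $\Left F(L(N_p))\to F(L(N_p))$ an isomorphism.  The convergence hypothesis of Proposition~\ref{pro:ABC_spese} is therefore satisfied, and the spectral sequence converges to $F(L(A))=\Left F(A)$ with filtration $(F:\Ideal^k(L(A)))_{k\in\N}$.

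It remains to identify $F:\Ideal^k(L(A))$ with $\Left F:\Ideal^k(A)$.  By Proposition~\ref{pro:localise_phantom_castle}, $L(\iota^k)\colon L(A)\to L(N_k)$ is the $\Ideal^k$-versal map in the phantom tower of $L(A)$, so Lemma~\ref{lem:ideal_versal} gives $F:\Ideal^k(L(A))=\ker F(L(\iota^k))$.  Conversely, every $f\in\Ideal^k(A,B)$ factors as $f=h\circ\iota^k$ by versality of~$\iota^k$, so $\Left F(f)=F(L(h))\circ F(L(\iota^k))$ kills~$x$ iff $F(L(\iota^k))$ does; hence $\Left F:\Ideal^k(A)$ is also this kernel.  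Applying the first two paragraphs to $L(A)$ finally gives $\bigcup_k\Left F:\Ideal^k(A)=\Left F(A)$.  The main obstacle is the second paragraph: the filtration $F:\Ideal^{2\infty}$ is \emph{a priori} strictly larger than $F:\Ideal^\infty$, and the key device is the decomposition $\iota^\infty=\ID_{N(A)}\circ\iota^\infty$, which uses the $\Ideal$-contractibility of~$N(A)$ to upgrade an $\Ideal^\infty$-morphism into an $\Ideal^{2\infty}$-one.
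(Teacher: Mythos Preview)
Your proof is correct, and it takes a genuinely different route from the paper for the chain of equalities
\(F:\Ideal^{2\infty}(A)=F:\Ideal^\infty(A)=\bigcup_r F:\Ideal^r(A)=\range\bigl(\Left F(A)\to F(A)\bigr)\).

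The paper works on the \emph{cellular approximation} side: it identifies \(F:\Ideal^r(A)\) with the range of \(F(\tilde{A}_r)\to F(A)\), then computes \(F:\Ideal^\infty(A)\) via the explicit \(\Ideal^\infty\)-projective resolution~\eqref{eq:Ideal_infty_resolution}, and finally invokes Theorem~\ref{the:cellular_limit_tower} to know that \(\tilde{A}\) is \(\Ideal^{2\infty}\)-projective, which makes \(\tilde{A}\to A\) an \(\Ideal^{2\infty}\)-projective resolution and hence pins down \(F:\Ideal^{2\infty}(A)\).  You work on the dual \emph{phantom} side: you read off \(\range\bigl(\Left F(A)\to F(A)\bigr)\) as \(\ker\bigl(F(A)\to\varinjlim F(N_n)\bigr)=\bigcup_r F:\Ideal^r(A)\), and then collapse the \(2\infty\) step with the single observation that \(N(A)\inOb\Null_\Ideal\) forces \(\ID_{N(A)}\in\Ideal^\infty\), so \(\iota^\infty=\ID_{N(A)}\circ\iota^\infty\in\Ideal^{2\infty}\).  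This is more economical: it bypasses both the \(\Ideal^\infty\)-resolution~\eqref{eq:Ideal_infty_resolution} and Theorem~\ref{the:cellular_limit_tower}.  What the paper's route buys is a tighter link to the structural results about \(\gen{\Proj_\Ideal}\); what yours buys is a self-contained argument that needs only Theorem~\ref{the:ideal_sum_complementary} and the versality lemmas.

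The convergence paragraph and the identification \(F:\Ideal^k(L(A))=\Left F:\Ideal^k(A)\) follow the same pattern as the paper (reduce to \(L(A)\) via Proposition~\ref{pro:localise_phantom_castle}, then apply Proposition~\ref{pro:ABC_spese}).  One small point worth making explicit: when you ``pass to the phantom tower of \(L(A)\)'' you should note that this yields the \emph{same} spectral sequence as the one for \(F\) and~\(A\) from \(E^1\) onward, because \(L(P_n)=P_n\) and \(L(\delta_n)=\delta_n\); the paper covers this by its blanket appeal to Proposition~\ref{pro:localise_phantom_castle}.
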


\begin{proof}
  Lemma~\ref{lem:ideal_versal} implies that \(F:\Ideal^r(A)\)
  is the range of \(\alpha_{r*}\colon F(\tilde{A}_r)\to F(A)\)
  for all \(r\in\N\) and that \(F:\Ideal^\infty(A)\) is the
  range of \(F\bigl(\bigoplus \tilde{A}_r\bigr)\to F(A)\)
  because~\eqref{eq:Ideal_infty_resolution} is an
  \(\Ideal^\infty\)\nb-projective resolution.  Now
  \(F\bigl(\bigoplus \tilde{A}_r\bigr) = \bigoplus
  F(\tilde{A}_r)\) shows that \(F:\Ideal^\infty(A)\) is the
  union of \(F:\Ideal^r(A)\) for \(r\in\N\).

  Let~\(\tilde{A}\) be as in~\eqref{eq:holim_induce}, so that
  \(\tilde{A}=L(A)\) and \(\Left F(A) = F(\tilde{A})\).  Since
  the inductive limit functor for Abelian groups is exact, the
  map \(\ID-S\) on \(\bigoplus F(\tilde{A}_r)\) is injective
  and has cokernel \(\varinjlim F(\tilde{A}_r)\).  Since the
  top row in~\eqref{eq:holim_induce} is an exact triangle, the
  long exact sequence yields \(F(\tilde{A})\cong\varinjlim
  F(\tilde{A}_r)\).  As a consequence, the range of \(f_*\colon
  F(\tilde{A})\to F(A)\) is equal to \(F:\Ideal^\infty(A)\).
  Since~\(\tilde{A}\) is \(\Ideal^{2\infty}\)\nb-projective by
  Theorem~\ref{the:cellular_limit_tower} and \(f\colon
  \tilde{A}\to A\) is an \(\Ideal\)\nb-equivalence, this map is
  an \(\Ideal^{2\infty}\)\nb-projective resolution of~\(A\).
  Hence the range of~\(f_*\) also agrees with
  \(F:\Ideal^{2\infty}(A)\) by Lemma~\ref{lem:ideal_versal}.

  Especially, \(F:\Ideal^\infty(A)=F(A)\) if
  \(A\inOb\gen{\Proj_\Ideal}\).  For such~\(A\), all objects
  that occur in the phantom castle belong to
  \(\gen{\Proj_\Ideal}\) as well, so that \(F(N_p) =
  \bigcup_{r\in\N} F:\Ideal^r(N_p)\) for all \(p\in\N\).  Hence
  Proposition~\ref{pro:ABC_spese} yields the convergence of the
  ABC spectral sequence to \(F(A)\) as asserted.  Since the ABC
  spectral sequences for \(A\) and~\(\tilde{A}\) are isomorphic
  by Proposition~\ref{pro:localise_phantom_castle}, we get
  convergence towards \(\Left F(A)\) for general~\(A\).
\end{proof}

The convergence of the ABC spectral sequences is more
problematic for a cohomological functor \(G\colon
\Tri^\op\to\Ab\) because projective limits of Abelian groups
are not exact.  In the following, we \emph{assume
  that~\(\contra\) maps direct sums to direct products} -- this
is the correct compatibility with direct sums for contravariant
functors.

The exactness of the first row in~\eqref{eq:holim_induce}
yields an exact sequence
\begin{equation}
  \label{eq:Milnor_cohomological_ABC}
  \varprojlim\nolimits^1 \contra^{*-1}(\tilde{A}_n)
  \into \contra^*(\tilde{A})
  \prto \varprojlim\nolimits \contra^*(\tilde{A}_n)
\end{equation}
for any~\(A\) (this also follows from
\cite{Meyer-Nest:Homology_in_KK}*{Theorem 4.4} applied to the
ideal~\(\Ideal^\infty\)).  Furthermore, \(G^*(\tilde{A}) =
\Right G^*(A)\).  Since~\eqref{eq:holim_induce} is an
\(\Ideal^\infty\)\nb-projective resolution, we have
\[
\varprojlim\nolimits^1 \contra^{-1}(\tilde{A}_n)
\cong \Ideal^\infty\contra(\tilde{A}).
\]
The same argument as in the homological case yields
\begin{equation}
  \label{eq:contra_infty_intersect}
  \Ideal^\infty\contra(A) = \bigcap_{n\in\N} \Ideal^n\contra(A).
\end{equation}
Using compatibility of~\(\contra\) with direct sums, we can
also rewrite the obstructions to the convergence of the ABC
spectral sequence in Proposition~\ref{pro:ABC_spese_dual}:
\[
\smash{\widetilde{\Bad}}^{pq} \cong
\Ideal^\infty\contra^q(\tilde{N}_p),
\]
where~\(\tilde{N}_p\) is the \(p\)th object in a phantom tower
over~\(\tilde{A}\) instead of~\(A\).  The spectral sequence
converges towards \(\Right\contra(A)\) if these obstructions
all vanish.

\begin{proposition}
  \label{pro:ABC_dual_converges}
  Let~\(\Ideal\) be a homological ideal with enough projectives
  that is compatible with direct sums, and let \(\contra\colon
  \Tri^\op\to\Ab\) be a cohomological functor that maps direct
  sums to direct products.  Let \(A\inOb\Tri\) and let
  \(L(A)\inOb\gen{\Proj_\Ideal}\) be its
  \(\Proj_\Ideal\)\nb-cellular approximation.  If \(L(A)\) is
  \(\Ideal^\infty\)\nb-projective, then the ABC spectral
  sequence for \(A\) and~\(\contra\) converges towards
  \(\Right\contra(A) = \contra\circ L(A)\).
\end{proposition}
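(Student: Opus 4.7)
The plan is to verify that the obstructions to convergence identified in the discussion preceding the proposition all vanish. Recall that the ABC spectral sequence for $A$ and $\contra$ converges towards $\Right\contra(A) = \contra\circ L(A)$ provided
\[
\smash{\widetilde{\Bad}}^{pq} \cong \Ideal^\infty \contra^q(\tilde{N}_p) = 0
\qquad\text{for all \(p,q\in\Z\),}
\]
where $\tilde{N}_p$ denotes the $p$-th $N$-object in a phantom tower over $L(A)$. By the defining formula for the filtration $\Ideal^\alpha \contra$, it is in fact enough to show that $\Ideal^\infty(\tilde{N}_p, B) = 0$ for every $B\inOb\Tri$, that is, that each $\tilde{N}_p$ is $\Ideal^\infty$\nb-projective.

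To prove $\tilde{N}_p \inOb \Proj_\Ideal^\infty$, I would exploit the hypothesis together with the explicit description of $\Proj_\Ideal^\infty$ in Proposition~\ref{pro:intersect_class}: write $L(A)$ as a direct summand of $X \defeq \bigoplus_{n\in\N} Q_n$ with $Q_n \inOb \Proj_\Ideal^n$, and fix a splitting $X = L(A) \oplus L(A)'$. Next I compute a phantom tower of $X$ in two ways. First, for each $n$ embed an $\Ideal$\nb-projective resolution of $Q_n$ in a phantom tower with objects $N_p^{(n)}$; by Lemma~\ref{lem:tower_collapses}, $N_p^{(n)} \inOb \Proj_\Ideal^n$ for every $p\in\N$. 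Because $\Ideal$ is compatible with countable direct sums, the termwise direct sum of these phantom towers is itself a phantom tower, now over $X$; its $p$-th $N$-object is $\bigoplus_n N_p^{(n)}$, which belongs to $\Proj_\Ideal^\infty$ by a second application of Proposition~\ref{pro:intersect_class}. Alternatively, the termwise direct sum of a phantom tower of $L(A)$ and one of $L(A)'$ yields a phantom tower over $X$ whose $p$-th $N$-object has the form $\tilde{N}_p \oplus \tilde{N}_p'$.

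Phantom towers of a given object are unique up to isomorphism by Lemma~\ref{lem:phantom_tower}, so these two descriptions give an isomorphism $\tilde{N}_p \oplus \tilde{N}_p' \cong \bigoplus_n N_p^{(n)}$ in $\Tri$. Hence $\tilde{N}_p$ is a direct summand of an $\Ideal^\infty$\nb-projective object, and Proposition~\ref{pro:intersect_class} makes plain that $\Proj_\Ideal^\infty$ is closed under direct summands, so $\tilde{N}_p \inOb \Proj_\Ideal^\infty$, as required, completing the argument. The main obstacle lies in passing from the finite-$k$ form of Lemma~\ref{lem:tower_collapses} to the case $k=\infty$: because the class $\Proj_\Ideal^\infty$ is not closed under cones, the lemma does not extend inductively along the phantom tower, and it is the detour through the summand-of-a-direct-sum description of $\Proj_\Ideal^\infty$ furnished by Proposition~\ref{pro:intersect_class} that bridges the gap.
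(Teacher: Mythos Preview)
Your argument has a genuine gap at the step invoking uniqueness of phantom towers. Lemma~\ref{lem:phantom_tower} asserts only that a phantom tower is determined up to isomorphism \emph{by the projective resolution it contains}; the paper explicitly warns that ``different resolutions yield different phantom towers.'' A trivial example: if $A$ itself is $\Ideal$\nb-projective, one phantom tower has $P_0=A$, $N_1=0$, whereas another with $P_0=A\oplus Q$ (for any nonzero $Q\inOb\Proj_\Ideal$) has $N_1\cong Q[1]\neq 0$. So your two phantom towers over $X$ need not be isomorphic, and the claimed isomorphism $\tilde{N}_p\oplus\tilde{N}_p'\cong\bigoplus_n N_p^{(n)}$ does not follow. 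In particular, you have not established that $\tilde{N}_p\inOb\Proj_\Ideal^\infty$ for an \emph{arbitrary} phantom tower over $L(A)$.

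The paper's proof sidesteps this by working one level up, with the spectral sequences themselves, which \emph{are} independent of the phantom tower from page~2 onward (Theorem~\ref{the:Etwo}). Having written $L(A)$ as a retract of $\bigoplus_n A_n$ with $A_n\inOb\Proj_\Ideal^n$, the paper first applies Proposition~\ref{pro:Ct_for_projective} to get convergence for each $A_n$, then observes that the direct sum of phantom castles is a phantom castle over $\bigoplus_n A_n$, so the ABC spectral sequence for the sum is the \emph{product} of the individual spectral sequences (using that $\contra$ turns sums into products) and therefore converges. Finally, since the ABC spectral sequence is an additive functor on~$\Tri$, convergence passes to the direct summand $L(A)$. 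Your construction of the ``good'' phantom tower of $X$ is exactly the right first move; the fix is to draw the conclusion about spectral sequences rather than about the objects~$\tilde{N}_p$, and then descend to $L(A)$ via additivity of the spectral sequence rather than via a putative isomorphism of towers.
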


\begin{proof}
  Proposition~\ref{pro:localise_phantom_castle} implies that
  \(A\) and \(L(A)\) have canonically isomorphic ABC spectral
  sequences.  Hence we may replace~\(A\) by \(L(A)\) and assume
  that~\(A\) itself is \(\Ideal^\infty\)\nb-projective.  By
  Proposition~\ref{pro:intersect_class}, \(A\) is a direct
  summand of \(\bigoplus_{n\in\N} A_n\) with
  \(\Ideal^n\)\nb-projective objects~\(A_n\).  The ABC spectral
  sequence for each~\(A_n\) converges by
  Proposition~\ref{pro:Ct_for_projective}.

  Since~\(\Ideal\) is compatible with countable direct sums, a
  direct sum of phantom castles over~\(A_n\) is a phantom
  castle over \(\bigoplus A_n\).  Thus the ABC spectral
  sequence for \(\bigoplus_{n\in\N} A_n\) is the direct product
  of the ABC spectral sequences for~\(A_n\); here we use
  that~\(\contra\) maps direct sums to direct products.  Hence
  the ABC spectral sequence for \(\bigoplus_{n\in\N} A_n\)
  converges towards \(\prod \contra(A_n) =
  \contra\bigl(\bigoplus A_n\bigr)\).  Since the ABC spectral
  sequence is an additive functor on~\(\Tri\), this implies
  that the ABC spectral sequence for any direct summand of
  \(\bigoplus A_n\) converges.  This yields the convergence of
  the ABC spectral sequence for \(L(A)\), as desired.
\end{proof}

\section{A classical special case}
\label{sec:classical_application}

Before we apply our results to equivariant bivariant
\(\K\)\nb-theory, we briefly discuss a more classical
application in homological algebra, where we recover results by
Marcel B\"okstedt and Amnon Neeman~\cite{Boekstedt-Neeman} and
where the ABC sectral sequence specialises to a spectral
sequence due to Alexander Grothendieck.

Let~\(\Abel\) be an Abelian category with enough projective
objects and exact countable direct sums.  Let \(\Ho(\Abel)\) be
the homotopy category of chain complexes over~\(\Abel\).  We
require no finiteness conditions, so that \(\Ho(\Abel)\) is a
triangulated category with countable direct sums.  We are
interested in the derived category of~\(\Abel\) and therefore
want to localise at the full subcategory
\(\Null\subseteq\Ho(\Abel)\) of exact chain complexes.  This
subcategory is localising because countable direct sums of
exact chain complexes are again exact by assumption.

The obvious functor defining this subcategory~\(\Null\) is the
functor
\[
H\colon \Ho(\Abel)\to\Abel^\Z
\]
that maps a chain complex to its homology.  The functor~\(H\)
is a stable homological functor that commutes with direct sums.
Hence its kernel~\(\Ideal_H\) is a homological ideal that is
compatible with direct sums.

Let \(\Proj\Abel^\Z\subseteq\Abel^\Z\) be the full subcategory
of projective objects.  Since we assume~\(\Abel\) to have
enough projective objects, any object of \(\Abel^\Z\) admits an
epimorphism from an object in \(\Proj\Abel^\Z\).  It is easy to
see that the left adjoint of the homology functor is defined on
\(\Proj\Abel^\Z\) and maps a sequence \((P_n)\) of projective
objects to the chain complex \((P_n)\) with vanishing boundary
map.  Since this functor is clearly fully faithful, we use it
to view \(\Proj\Abel^\Z\) as a full subcategory of
\(\Ho(\Abel)\), omitting the functor~\(H^\lad\) from our
notation.  Using the criterion
of~\cite{Meyer-Nest:Homology_in_KK}, it is easy to check that
the functor~\(H\) above is the universal \(\Ideal_H\)\nb-exact
homological functor.

Theorems \ref{the:complementary_adjoint}
and~\ref{the:adjoint_limit} apply here.  The first one shows
that \((\gen{\Proj\Abel^\Z},\Null)\) is a complementary pair of
subcategories.  Thus \(\gen{\Proj\Abel^\Z}\) is equivalent to
the derived category of~\(\Abel\).  Furthermore, any object of
the derived category is a homotopy colimit of a diagram with
entries in \((\Proj\Abel^\Z)^{\star n}\) for \(n\in\N\).

Let \(F\colon \Abel\to\Ab\) be an additive covariant functor
that commutes with direct sums.  We extend~\(F\) to an exact
functor \(\Ho(F)\colon \Ho(\Abel)\to\Ho(\Ab)\).  Let
\[
\bar{F}_q=H_q\circ \Ho(F)\colon \Ho(\Abel)\to \Ab
\]
be the functor that maps a chain complex~\(C_\bullet\) to the
\(q\)th homology of \(\Ho(F)(C_\bullet)\).  This is a
homological functor.  Its derived functors with respect
to~\(\Ideal_H\) are computed
in~\cite{Meyer-Nest:Homology_in_KK}: for a chain
complex~\(C_\bullet\), we have
\[
\Left_p\bar{F}_q(C_\bullet) =
(\Left_pF)\bigl(H_q(C_\bullet)\bigr),
\]
that is, we apply the usual derived functors of~\(F\) to the
homology of~\(C_\bullet\).  Thus the ABC spectral sequence
computes the homology of the total derived functor of~\(F\)
applied to~\(C_\bullet\) in terms of the derived functors
of~\(F\), applied to \(H_*(C_\bullet)\). Such a spectral
sequence was already constructed by Alexander Grothendieck.

\section{Construction of the Baum--Connes assembly map}
\label{sec:applications_BC}

Finally, we apply our general machinery to construct the
Baum--Connes assembly map with coefficients first for locally
compact groups and then for certain discrete quantum groups.
In the group case, we get a simpler argument than
in~\cite{Meyer-Nest:BC}.

\subsection{The assembly map for locally compact groups}
\label{sec:assembly_group}

Let~\(G\) be a second countable locally compact group and let
\(\KKcat^G\) be the \(G\)\nb-\alb{}equivariant Kasparov
category; its objects are the separable \(\Cst\)\nb-algebras
with a strongly continuous action of~\(G\), its morphism space
\(A\to B\) is \(\KK^G_0(A,B)\).  It is shown
in~\cite{Meyer-Nest:BC} that this category is triangulated (we
must exclude \(\Z/2\)\nb-graded \(\Cst\)\nb-algebras for this).

The category \(\KKcat^G\) has countable direct sums -- they are
just direct sums of \(\Cst\)\nb-algebras.  But uncountable
direct sums usually do not exist because of the separability
assumption in the definition of \(\KKcat^G\), which is needed
to make the analysis work.  Alternative definitions of
bivariant \(\K\)\nb-theory by Joachim
Cuntz~\cite{Cuntz-Meyer-Rosenberg} still work for non-separable
\(\Cst\)\nb-algebras, but it is not clear whether direct sums of
\(\Cst\)\nb-algebras remain direct sums in this category because
the definition of the Kasparov groups for inseparable
\(\Cst\)\nb-algebras involves colimits, which do not commute
with the direct products that appear in the definition of the
direct sum.

With enough effort, it should be possible to extend
\(\KKcat^G\) to a category with uncountable direct sums.  But
it seems easier to avoid this by imposing cardinality
restrictions on direct sums.

For any closed subgroup \(H\subseteq G\), we have induction and
restriction functors
\[
\Ind_H^G\colon \KKcat^H\to\KKcat^G,
\qquad
\Res_G^H\colon \KKcat^G\to\KKcat^H;
\]
the latter functor is quite trivial and simply forgets part of
the group action.  These functors give rise to two
subcategories of \(\KKcat^G\), which play a crucial role
in~\cite{Meyer-Nest:BC}.

\begin{definition}
  \label{def:CC}
  Let~\(\Fam\) be the set of all compact subgroups of~\(G\).
  \begin{align*}
    \CC &\defeq \{A\inOb\KKcat^G \mid
    \text{\(\Res_G^H(A)=0\) for all \(H\in\Fam\)}\},\\
    \CI &\defeq \{\Ind_H^G(A) \mid \text{\(A\inOb\KKcat^H\) and
      \(H\in\Fam\)}\}.
  \end{align*}
\end{definition}

Whereas the subcategory~\(\CC\) is localising by definition,
\(\CI\) is not.  Therefore, the localising subcategory it
generates, \(\gen{\CI}\), plays an important role as well.
Since~\(G\) acts properly on objects of~\(\CI\), they satisfy
the Baum--Connes conjecture, that is, the Baum--Connes assembly
map is an isomorphism for coefficients in~\(\CI\).  Since
domain and target of the assembly map are exact functors on
\(\KKcat^G\), this extends to the category \(\gen{\CI}\).  On
objects of \(\CC\) the domain of the Baum--Connes assembly map
is known to vanish, so that the Baum--Connes conjecture
predicts \(\K_*(G\rcross A)=0\) for \(A\inOb\CC\).

On a technical level, the main tool in~\cite{Meyer-Nest:BC} is
that the pair of subcategories \((\gen{\CI},\CC)\) is
complementary.  Hence the Baum--Connes assembly map is
determined by what it does on these two subcategories.  This
implies that its domain is the localisation of the functor
\(A\mapsto \K_*(G\rcross A)\) at~\(\CC\) and that the assembly
map is the natural transformation from this localisation to the
original functor.

Put differently, the Baum--Connes assembly map is the only
natural transformation from an exact functor on \(\KKcat^G\) to
the functor \(\K_*(G\rcross\blank)\) that is an isomorphism on
\(\CI\) and whose domain vanishes on~\(\CC\) (we give some more
details about this argument in the related quantum group case
below).

In order to prove that \((\gen{\CI},\CC)\) is complementary, we
introduce the following ideal:

\begin{definition}
  \label{def:ideal_BC}
  Let \(\VC = \bigcap_{H\in\Fam} \ker \Res_G^H\).
\end{definition}

This ideal consists of the morphisms that \emph{vanish for
  compact subgroups} in the notation of~\cite{Meyer-Nest:BC}.
Clearly, an object belongs to~\(\CC\) if and only if its
identity map belongs to~\(\VC\), that is, \(\Null_\VC=\CC\).
Moreover, \cite{Meyer-Nest:BC}*{Proposition 4.4} implies that
objects of \(\CI\) are \(\VC\)\nb-projective; even more,
\(f\in\VC(A,B)\) if and only if~\(f\) induces the zero map
\(\KK^G_*(D,A)\to\KK^G_*(D,B)\) for all \(D\inOb\CI\).

We can also describe~\(\VC\) as the kernel of a single functor:
\[
F= (\Res_G^H)_{H\in\Fam}\colon
\KKcat^G\to \prod_{H\in\Fam} \KKcat^H.
\]
The functor~\(F\) commutes with direct sums because each
functor \(\Res_G^H\) clearly does so.  Hence~\(\VC\) is
compatible with countable direct sums.

The following theorem contains the main assertion in
\cite{Meyer-Nest:BC}*{Theorem 4.7}.  We will provide a simpler
proof here than in~\cite{Meyer-Nest:BC}.

\begin{theorem}
  \label{the:complementary_group_BC}
  The projective objects for~\(\VC\) are the retracts of direct
  sums of objects in \(\CI\), and the ideal~\(\VC\) has enough
  projective objects.  Hence the pair of subcategories
  \((\gen{\CI},\CC)\) is complementary.
\end{theorem}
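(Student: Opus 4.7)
The plan is to apply Theorem~\ref{the:complementary_adjoint} to the stable exact functor $F = (\Res_G^H)_{H \in \Fam}\colon \KKcat^G \to \Addi \defeq \prod_{H \in \Fam} \KKcat^H$ introduced just before the statement. Its kernel is $\VC$ and its class of $F$\nb-contractible objects is $\CC$, by the very definitions of these subcategories. Each restriction functor $\Res_G^H$ is stable, exact, and commutes with countable direct sums (it simply forgets part of the group action on a fixed separable $\Cst$\nb-algebra), so the same is true of~$F$. This already shows that $\VC$ is a homological ideal compatible with countable direct sums, and reduces the theorem to verifying the hypotheses on $\Proj\Addi$ and on $F^\lad$.

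For each compact $H \in \Fam$, the equivariant Green--Julg theorem supplies the adjunction $\KK^G(\Ind_H^G B, C) \cong \KK^H(B, \Res_G^H C)$, natural in $C \in \KKcat^G$, so that $\Ind_H^G$ is left adjoint to $\Res_G^H$. Define $\Proj\Addi \subseteq \Addi$ as the full subcategory of tuples $(B_H)_{H \in \Fam}$ supported on a countable subset of $\Fam$. On such a tuple, the countable direct sum $F^\lad((B_H)) \defeq \bigoplus_H \Ind_H^G B_H$ exists in $\KKcat^G$, and assembling the coordinatewise Green--Julg isomorphisms yields
\[
\KK^G\bigl(\bigoplus\nolimits_H \Ind_H^G B_H,\, C\bigr)
\cong \prod_H \KK^H(B_H, \Res_G^H C)
= \Addi\bigl((B_H), F(C)\bigr),
\]
confirming that $F^\lad$ is left adjoint to $F$ on $\Proj\Addi$.

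The main obstacle is the ``enough epimorphisms'' condition: for every $A \inOb \KKcat^G$ we must exhibit $(B_H) \in \Proj\Addi$ and a morphism $(B_H) \to F(A)$ whose adjoint $\bigoplus_H \Ind_H^G B_H \to A$ is a $\VC$\nb-epimorphism in $\KKcat^G$. The natural candidate takes $B_H = \Res_G^H A$ with the sum of counits $\Ind_H^G \Res_G^H A \to A$: any $\VC$\nb-projective test object is, by the first part of the theorem once proved, a retract of some $\Ind_H^G B$, and the Green--Julg adjunction combined with the unit $\Res_G^H A \to \Res_G^H \Ind_H^G \Res_G^H A$ lifts any morphism $\Ind_H^G B \to A$ through the $H$\nb-summand. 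The only issue is that the sum a priori ranges over the possibly uncountable set $\Fam$. Since $G$ is second countable and $A$ is separable, a countable subfamily $\{H_i\} \subseteq \Fam$ suffices---any $\VC$\nb-projective test object $\Ind_H^G B$ is detected through some $H_i$ by a standard separability argument---so $\bigoplus_i \Ind_{H_i}^G \Res_G^{H_i} A \to A$ already serves as the required $\VC$\nb-epimorphism.

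With all hypotheses of Theorem~\ref{the:complementary_adjoint} verified, we obtain simultaneously: $\VC$ has enough projective objects; these projectives are the retracts of objects $F^\lad(P) = \bigoplus_i \Ind_{H_i}^G B_i$ with $P \in \Proj\Addi$, i.e.\ retracts of countable direct sums of objects in $\CI$; and the pair of localising subcategories $(\gen{\CI}, \CC) = (\gen{F^\lad(\Proj\Addi)}, \Null_F)$ is complementary in $\KKcat^G$, using that $\gen{F^\lad(\Proj\Addi)} = \gen{\CI}$ (each $\Ind_H^G B$ lies in $F^\lad(\Proj\Addi)$, and each $F^\lad((B_H))$ is a countable direct sum of objects in $\CI$) and $\Null_F = \CC$.
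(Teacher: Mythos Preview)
Your strategy---apply Theorem~\ref{the:complementary_adjoint} to the functor \(F=(\Res_G^H)_{H\in\Fam}\)---is exactly the paper's, but the crucial adjunction step fails for general locally compact~\(G\).  You assert that the ``equivariant Green--Julg theorem'' gives \(\KK^G(\Ind_H^G B,C)\cong\KK^H(B,\Res_G^H C)\) for every compact subgroup~\(H\).  This is false unless~\(H\) is \emph{open} in~\(G\); the paper says so explicitly: ``For locally compact~\(G\), the argument gets more complicated because the functor \(\Res_G^H\) does not always have a left adjoint, and if it has, it need not be simply \(\Ind_H^G\).''  Already for \(G=\R\) and \(H=\{0\}\) the left adjoint, when constructed via the duality of~\cite{Emerson-Meyer:Dualities}, sends~\(A\) to \(\CONT_0(T\,G/H)\otimes A\) rather than \(\CONT_0(G/H)\otimes A=\Ind_H^G A\); the tangent bundle twist is essential.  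So your argument is only valid in the discrete case (where compact implies open and~\(\Fam\) is automatically countable), which is precisely the easy half of the paper's proof.

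For non-discrete~\(G\) the paper replaces \(\Res_G^H\) by \(p_{G/H}^*\colon\KKcat^G\to\RKKcat^G(G/H)\) and restricts attention to a \emph{countable} family of \emph{large} compact subgroups~\(H_n\) (maximal compact in open almost-connected subgroups).  Two facts make this work: every compact subgroup is subconjugate to some~\(H_n\), so \(\VC=\bigcap_n\ker\Res_G^{H_n}\); and each \(G/H_n\) is a smooth manifold, which is what allows~\cite{Emerson-Meyer:Dualities} to construct the left adjoint of \(p_{G/H_n}^*\) on all of \(\RKKcat^G(G/H_n)\).  Your ``standard separability argument'' does not supply any of this: it neither produces the cofinal countable family nor the adjoint, and the phrase hides the real content of the locally compact case.
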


\begin{proof}
  As in Theorem~\ref{the:complementary_adjoint}, we study the
  partially defined left adjoint of the functor~\(F\) above or,
  equivalently, of the functors \(\Res_G^H\) for \(H\in\Fam\).

  The discrete case is particularly simple because then all
  \(H\in\Fam\) are open subgroups.  If \(H\subseteq G\) is
  open, then \(\Ind_H^G\) is left adjoint to \(\Res_G^H\).
  Thus we may take \(\Proj\Addi= \prod_{H\in\Fam} \KKcat^H\) in
  Theorem~\ref{the:complementary_adjoint} and get
  \(F^\lad\bigl((A_H)_{H\in\Fam}\bigr) = \bigoplus_{H\in\Fam}
  \Ind_H^G(A_H)\).  Notice that the set~\(\Fam\) is countable
  if~\(G\) is discrete, so that this definition is legitimate.
  It follows that~\(\VC\) has enough projective objects and
  that they are all direct summands of \(\bigoplus_{H\in\Fam}
  \Ind_H^G(A_H)\) for suitable families~\((A_H)\), as asserted.

  For locally compact~\(G\), the argument gets more complicated
  because the functor \(\Res_G^H\) does not always have a left
  adjoint, and if it has, it need not be simply \(\Ind_H^G\).
  But there are still enough compact subgroups~\(H\) for which
  the left adjoint is defined on enough
  \(H\)\nb-\(\Cst\)-algebras and close enough to the induction
  functor for the argument above to go through.

  A good way to understand this is the duality theory developed
  in \cites{Emerson-Meyer:Euler, Emerson-Meyer:Dualities}.
  This is relevant because the induction functor provides an
  equivalence of categories \(\KKcat^H \simeq \KKcat^{G\ltimes
    G/H}\), where we use the groupoid \(G\ltimes G/H\), that
  is, we consider \(G\)\nb-equivariant bundles of
  \(\Cst\)\nb-algebras over~\(G/H\).  This equivalence of
  categories reflects the equivalence between the groupoids
  \(H\) and \(G\ltimes G/H\).

  Identifying \(\KKcat^H\simeq \KKcat^{G\ltimes G/H}\), the
  restriction functor \(\Res_G^H\) becomes the functor
  \(p_{G/H}^*\colon \KKcat^G\to \KKcat^{G\ltimes G/H}\) that
  pulls back a \(G\)\nb-\(\Cst\)-algebra to a trivial bundle of
  \(G\)\nb-\(\Cst\)-algebras over \(G/H\).
  Following~\cite{Kasparov:Novikov}, it is shown
  in~\cite{Emerson-Meyer:Euler} that the left adjoint of
  \(p^*_{G/H}\) is defined on all trivial bundles if~\(G/H\) is
  a smooth manifold.  We will see that this is enough for our
  purposes.

  As in~\cite{Meyer-Nest:BC}, we call a compact subgroup
  \emph{large} if it is a maximal compact subgroup in an open,
  almost connected subgroup of~\(G\).

  Let~\(H\) be large.  Then \(G/H\) is a smooth manifold and
  any compact subgroup is contained in a large one by
  \cite{Meyer-Nest:BC}*{Lemma 3.1}.  Furthermore, since~\(G\)
  is second countable there is a sequence \((U_n)_{n\in\N}\) of
  almost connected open subgroups of~\(G\) such that any other
  one is contained in~\(U_n\) for some \(n\in\N\).  Pick a
  maximal compact subgroup \(H_n\subseteq U_n\) for each
  \(n\in\N\).  Then any compact subgroup of~\(G\) is
  subconjugate to~\(H_n\) for some \(n\in\N\).  Therefore, we
  already have \(\VC=\bigcap_{n\in\N} \Res_G^{H_n}\) because
  \(\Res_G^K\) factors through \(\Res_G^H\) if~\(K\) is
  subconjugate to~\(H\).

  For a compact subgroup \(H\subseteq G\), let
  \(\RKKcat^G(G/H)\subseteq \KKcat^{G\ltimes G/H}\) be the full
  subcategory of trivial bundles over~\(G/H\) or, equivalently,
  the essential range of the functor \(p_{G/H}^*\).  We do not
  care whether this category is triangulated, it is certainly
  additive.  We replace the functors \(\Res_G^H\) by
  \(p_{G/H}^*\colon \KKcat^G\to\RKKcat^G(G/H)\) for
  \(H\in\Fam\).  For the large compact subgroups~\(H_n\)
  selected above, the results in
  \cites{Emerson-Meyer:Dualities} show that the left adjoint of
  \(p_{G/H}^*\) is defined on all of \(\RKKcat^G(G/H)\) and
  maps the trivial bundle with fibre~\(A\) to
  \(\CONT_0(T\,G/H)\otimes A\) with the diagonal action
  of~\(G\); here \(T\,G/H\) denotes the tangent space of
  \(G/H\) (we are not allowed to use the Clifford algebra dual
  considered in \cite{Emerson-Meyer:Euler} because it involves
  \(\Z/2\)\nb-graded \(\Cst\)\nb-algebras, which do not belong
  to our category).

  Thus we have verified the hypotheses of
  Theorem~\ref{the:complementary_adjoint} and can conclude
  that~\(\VC\) has enough projective objects and that~\(\CC\)
  is reflective.  It remains to observe that the projective
  objects are precisely the direct summands of countable direct
  sums of objects of~\(\CI\).  We have already observed that
  objects of~\(\CI\) are \(\VC\)\nb-projective.  Conversely,
  Theorem~\ref{the:complementary_adjoint} shows that the
  projective objects are retracts of \(\bigoplus_{n\in\N}
  \CONT_0(T\, G/H_n)\otimes A_n\) for suitable
  \(G\)\nb-\(\Cst\)-algebras~\(A_n\).  The summands are
  isomorphic to \(\Ind_{H_n}^G (\CONT_0(T_1\,G/H_n)\otimes
  A_n)\) where \(T_1\,G/H_n\) denotes the tangent space of
  \(G/H_n\) at the base point \(1\cdot H_n\).  Hence all
  projective objects are of the required form.
\end{proof}

Since the stable homological functor \(F_*(A)\defeq
\K_*(G\rcross A)\) commutes with direct sums,
Theorem~\ref{the:homological_limit} applies to it and shows
that the ABC spectral sequence for the ideal~\(\VC\) converges
towards the domain of the Baum--Connes assembly map -- which is
the localisation of~\(F_*\) at~\(\CC\) by
\cite{Meyer-Nest:BC}*{Theorem 5.2}.

It turns out that for a totally disconnected group~\(G\) the
ABC spectral sequence agrees with a known spectral sequence
that we get from the older definition of the Baum--Connes
assembly map and the skeletal filtration of a \(G\)\nb-CW-model
for the universal proper \(G\)\nb-space \(\EG G\)
(see~\cite{Kasparov-Skandalis:Buildings}).  We omit the proof
of this statement, which requires some work.

\subsection{An assembly map for torsion-free discrete quantum groups}
\label{sec:assembly-qg}

Before we turn to the assembly map, we must discuss some open
problems that lead us to restrict attention to the torsion-free
case.

The first issue is the correct definition of ``torsion'' for
locally compact quantum groups.  The torsion in a locally
compact group is the family of compact subgroups.  Quantum
groups exhibit some torsion phenomena that do not appear for
groups, and it is conceivable that we have not yet found all of
them.  First, compact quantum subgroups are not enough: they
should be replaced by proper quantum homogeneous spaces, so
that open subgroups provide torsion in \(\Cst(G)\)
whenever~\(G\) is disconnected.  Secondly, projective
representations of compact groups with a non-trivial cocycle
also provide torsion (in their discrete dual); for instance,
\(\Cst\bigl(\textup{SO}(3)\bigr)\) is not torsion-free because
of its projective representation on~\(\C^2\).

If we considered \(\Cst\bigl(\textup{SO}(3)\bigr)\) to be
torsion-free, then the Baum--Connes assembly map for it (which
we describe below) would fail to be an isomorphism.  The
correct formulation of the Baum--Connes conjecture for
\(\Cst\bigl(\textup{SO}(3)\bigr)\) turns out to be equivalent to
the Baum--Connes conjecture for
\(\Cst\bigl(\textup{SU}(2)\bigr)\) -- which is torsion-free -- so
that there is no need to discuss it in its own right
in~\cite{Meyer-Nest:BC_Coactions}.

I propose to approach torsion in \emph{discrete} quantum groups
by studying actions of its compact dual quantum group on
finite-dimensional \(\Cst\)\nb-algebras.  A discrete quantum
group is \emph{torsion-free} if any such action is a direct sum
of actions that are Morita equivalent to the trivial action
on~\(\C\).

The above definition of torsion gives the expected results in
simple cases.  First, \(\CONT_0(G)\) for a discrete group~\(G\)
is torsion-free if and only if~\(G\) contains no finite
subgroups.  Secondly, \(\Cst(G)\) for a compact group is
torsion-free if and only if~\(G\) is connected and has
torsion-free fundamental group; this is exactly the generality
in which Universal Coefficient Theorems for equivariant
Kasparov theory work (see \cites{Meyer-Nest:BC_Coactions,
  Rosenberg-Schochet:Kunneth}).  Christian Voigt shows
in~\cite{Voigt:Baum-Connes_qSU2} that the quantum deformations
of simply connected Lie groups such as \(\textup{SU}_q(n)\) are
torsion-free.

Another issue is to find analogues of the restriction and
induction functors for the non-classical torsion that may
appear, and to prove analogues of the adjointness relations
used in the proof of Theorem~\ref{the:complementary_group_BC}.
For honest quantum subgroups, the restriction functor is
evident, and Stefaan Vaes has constructed induction functors
for actions of quantum group \(\Cst\)\nb-algebras
in~\cite{Vaes:Induction_Imprimitivity}.  I expect restriction
to be left adjoint to induction for open quantum subgroups and,
in particular, for quantum subgroups of discrete quantum
groups.

For the time being, we avoid these problems and limit our
attention to the torsion-free case.  More precisely, we
consider arbitrary discrete quantum groups, but disregard
torsion.  The resulting assembly map should not be an
isomorphism for quantum groups with torsion.

The discrete quantum groups are precisely the duals of compact
quantum groups; we use reduced duals here because these appear
also in the Baum--Connes conjecture.  It is useful to
reformulate results about a discrete quantum group in terms of
its compact dual as in \cite{Meyer-Nest:Homology_in_KK}*{Remark
  2.9}.  Let~\(G\) be a compact quantum group and
let~\(\widehat{G}\) be its discrete dual.  Since we pretend
that~\(\widehat{G}\) is torsion-free, there is only one
``restriction functor'' to consider: the forgetful functor
\(\KKcat^{\widehat{G}}\to\KKcat\) that forgets the action
of~\(\widehat{G}\) altogether.  The category
\(\KKcat^{\widehat{G}}\) is equivalent to \(\KKcat^G\) by
Baaj--Skandalis duality.  Under this equivalence, the forgetful
functor \(\KKcat^{\widehat{G}}\to\KKcat\) corresponds to the
crossed product functor
\[
G\ltimes\blank\colon \KKcat^G\to\KKcat,
\qquad A\mapsto G\ltimes A.
\]
The induction functor from the trivial subgroup to~\(\widehat{G}\)
corresponds under Baaj--Skandalis duality to the functor
\(\tau\colon \KKcat\to\KKcat^G\) that equips a
\(\Cst\)\nb-algebra with the trivial action of~\(G\).  This
functor is left adjoint to the crossed product functor.

Hence the relevant subcategories \(\CI\), \(\CC\) and the
ideal~\(\VC\) correspond to
\[
\CI = \{\tau(A)\mid A\inOb\KKcat\},\qquad
\CC = \{A\inOb\KKcat^G\mid G\ltimes A\simeq0\},
\]
where~\(\simeq\) means \(\KK\)\nb-equivalence, that is,
isomorphism in \(\KKcat\), and
\[
\VC = \{f\in\KKcat^G\mid G\ltimes f=0\}.
\]

The ideal~\(\VC\) is already studied in
\cite{Meyer-Nest:Homology_in_KK}*{\S5}.  It is shown there
that~\(\VC\) has enough projective objects, and the universal
homological functor for it is described.  The target category
involves actions of the representation ring \(\Rep(G)\) of the
compact quantum group~\(G\) on objects of \(\KKcat\); such an
action on~\(A\) is, by definition, a ring homomorphism
\(\Rep(G)\to\KK_0(A,A)\).  The category \(\KKcat[\Rep(G)]\) of
\(\Rep(G)\)-modules in \(\KKcat\) is not yet Abelian because
\(\KKcat\) is not Abelian.  To remedy this, we must replace
\(\KKcat\) by its Freyd category of coherent functors
\(\KKcat\to\Ab\).  But this completion does not affect
homological algebra much because \(\KKcat[\Rep(G)]\) is an
exact subcategory that contains all projective objects; hence
we can compute derived functors without leaving the subcategory
\(\KKcat[\Rep(G)]\).

We could modify the ideal~\(\VC\) and consider all~\(f\) for
which \(G\ltimes f\) induces the zero map on \(\K\)\nb-theory.
This leads to a simpler Abelian approximation, namely, the
category of all countable \(\Z/2\)\nb-graded
\(\Rep(G)\)-modules.  But this larger ideal no longer leads to
the subcategories \(\CC\) and \(\CI\) above.

\begin{theorem}
  \label{the:complementary_BC_quantum}
  Let~\(G\) be any compact quantum group.  Then the
  ideal~\(\VC\) is compatible with countable direct sums and
  has enough projective objects.  The pair of subcategories
  \((\gen{\CI},\CC)\) is complementary.
\end{theorem}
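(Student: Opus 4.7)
The plan is to deduce this theorem directly from Theorem~\ref{the:complementary_adjoint} applied to the crossed product functor
\[
F \defeq G\ltimes\blank\colon \KKcat^G\to\KKcat,
\]
with \(\Addi\defeq\KKcat\) (a triangulated category) and \(\Proj\Addi\defeq\KKcat\) itself.

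First I would check that \(F\) fits into the framework. Under Baaj--Skandalis duality, \(F\) is naturally equivalent to the forgetful functor \(\KKcat^{\widehat G}\to\KKcat\), which is a stable exact functor; in particular it preserves exact triangles and suspensions. Since direct sums in \(\KKcat^G\) are just direct sums of \(\Cst\)-algebras with the diagonal \(G\)-action, and the forgetful functor leaves the underlying algebra unchanged, \(F\) commutes with countable direct sums. By Example~\ref{exa:ker_compatible_sums} this already gives that \(\VC=\ker F\) is a homological ideal compatible with countable direct sums, which is the first claim of the theorem.

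Next, the excerpt records that \(\tau\colon \KKcat\to\KKcat^G\) is left adjoint to \(F\); being a genuine \textup{(}total\textup{)} left adjoint, it is in particular defined on every object of \(\Proj\Addi=\KKcat\). For each \(A\inOb\KKcat^G\) the identity morphism \(F(A)\to F(A)\) itself serves as the required ``epimorphism'' from \(\Proj\Addi\) onto \(F(A)\). All hypotheses of Theorem~\ref{the:complementary_adjoint} are therefore met, so that theorem asserts: \(\VC\) has enough projective objects, the subcategory \(\Null_F\) is reflective, and the pair \(\bigl(\gen{\tau(\KKcat)},\Null_F\bigr)\) is complementary.

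It remains only to identify these subcategories with \(\CI\) and \(\CC\). By definition \(\tau(\KKcat)=\CI\), so \(\gen{\tau(\KKcat)}=\gen{\CI}\). Moreover \(A\inOb\Null_F\) if and only if \(\ID_A\in\VC\), i.e.\ \(G\ltimes\ID_A=\ID_{G\ltimes A}=0\) in \(\KKcat\), which is equivalent to \(G\ltimes A\simeq 0\); this is precisely the defining condition for \(\CC\). Hence \((\gen{\CI},\CC)\) is complementary, which finishes the proof. The whole argument is essentially a formal consequence of Theorem~\ref{the:complementary_adjoint}; there is no genuine obstacle, only the standard verifications that \(F\) is exact, stable, sum-preserving and admits \(\tau\) as a total left adjoint, all of which are built into Baaj--Skandalis duality.
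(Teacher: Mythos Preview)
Your proof is correct and essentially follows the paper's route: both arguments use that the crossed product functor commutes with countable direct sums to get compatibility of~\(\VC\), and both exploit the adjunction \(\tau\lad (G\ltimes\blank)\) to produce enough projectives before invoking the complementarity machinery. The only cosmetic difference is that the paper cites an external lemma for enough projectives and then applies Theorem~\ref{the:ideal_sum_complementary} directly, whereas you go through the packaged Theorem~\ref{the:complementary_adjoint}; since the latter's proof is precisely ``enough projectives via the partial adjoint, then Theorem~\ref{the:ideal_sum_complementary}'', the two arguments unfold to the same thing.
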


\begin{proof}
  The ideal~\(\VC\) has enough \(\VC\)\nb-projective objects by
  \cite{Meyer-Nest:BC}*{Lemma 5.2}, which also shows that the
  \(\VC\)\nb-projective objects are precisely the direct
  summands of objects in~\(\CI\).  The ideal~\(\VC\) is
  compatible with direct sums because the crossed product
  functor commutes with direct sums.  Now
  Theorem~\ref{the:ideal_sum_complementary} shows that the pair
  of subcategories \((\gen{\CI},\CC)\) is complementary.
\end{proof}

\begin{definition}
  \label{def:assembly_map}
  Let \(F\colon \KKcat^G\to\Abel\) be some homological functor.
  The \emph{assembly map} for~\(F\) with coefficients in~\(A\) is the
  map \(\Left F(A)\to F(A)\), where the localisation \(\Left
  F\) is formed with respect to the subcategory~\(\CC\).
\end{definition}

To get an analogue of the Baum--Connes assembly map, we should
consider the functor \(F(A)\defeq \K_*(A)\) because it
corresponds to the functor \(B\mapsto \K_*(G\rcross B)\) under
Baaj--Skandalis duality.  A torsion-free discrete quantum group
has the \emph{Baum--Connes property} with coefficients if the
assembly map \(\Left F(A)\to F(A)\) is an isomorphism for
all~\(A\) for this functor.

\begin{proposition}
  \label{pro:assembly_unique}
  Let \(F\colon \KKcat^G\to\Abel\) be a homological functor
  that commutes with direct sums.  The assembly map \(\Left
  F\Rightarrow F\) is the unique natural transformation from a
  functor \(\tilde{F}\) to~\(F\) with the following properties:
  \begin{itemize}
  \item \(\tilde{F}\) is homological and commutes with direct
    sums;

  \item the natural transformation is an isomorphism  for
    objects in~\(\CI\);

  \item \(\tilde{F}\) vanishes on~\(\CC\).
  \end{itemize}
\end{proposition}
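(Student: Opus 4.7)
The plan is to first verify that $\Left F$ itself satisfies the three listed properties, and then to show that any other natural transformation $\eta\colon \tilde{F}\Rightarrow F$ with these properties agrees with the assembly map via a unique natural isomorphism $\tilde{F}\cong \Left F$ compatible with the maps to~$F$.

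First I would check that the cellular approximation functor $L\colon \KKcat^G\to\gen{\CI}$ from the complementary pair $(\gen{\CI},\CC)$ of Theorem~\ref{the:complementary_BC_quantum} commutes with countable direct sums: the direct sum of the defining triangles $L(A_i)\to A_i\to N(A_i)\to L(A_i)[1]$ is again an exact triangle of the same form, since $\gen{\CI}$ and~$\CC$ are both localising, and this forces $L\bigl(\bigoplus A_i\bigr)\cong \bigoplus L(A_i)$ by uniqueness of the complementary decomposition. Since~$L$ is exact, $\Left F = F\circ L$ is then homological and commutes with countable direct sums; it vanishes on~$\CC$ because $L$ does; and $L(A)\cong A$ for $A\inOb\CI\subseteq\gen{\CI}$, so the assembly map is an isomorphism on~$\CI$.

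The key step for uniqueness is to show that any~$\eta$ satisfying the three properties is an isomorphism on the whole of $\gen{\CI}$. Let $\Cat_\eta\subseteq\KKcat^G$ be the full subcategory of objects~$X$ for which $\eta_X$ is invertible. Because $\tilde{F}$ and~$F$ are stable, $\Cat_\eta$ is closed under suspensions; because they are homological, the Five Lemma applied to the long exact sequences induced on an exact triangle shows that $\Cat_\eta$ is triangulated; and because they commute with countable direct sums, $\Cat_\eta$ is closed under these as well. Thus $\Cat_\eta$ is a localising subcategory of~$\KKcat^G$ containing~$\CI$ by the second hypothesis on~$\eta$, hence contains $\gen{\CI}$.

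To conclude, I would use the complementary triangle $L(A)\to A\to N\to L(A)[1]$ for each $A\inOb\KKcat^G$. Since~$\CC$ is triangulated, all suspensions of~$N$ lie in~$\CC$, so $\tilde{F}$ vanishes on them, and the induced map $\tilde{F}\bigl(L(A)\bigr)\to \tilde{F}(A)$ is an isomorphism. Postcomposing its inverse with $\eta_{L(A)}\colon \tilde{F}\bigl(L(A)\bigr)\xrightarrow{\sim} F\bigl(L(A)\bigr)=\Left F(A)$ yields a natural isomorphism $\theta\colon \tilde{F}\xrightarrow{\sim}\Left F$, and naturality of~$\eta$ on the morphism $L(A)\to A$ shows that the assembly map composed with~$\theta$ recovers~$\eta$. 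The main point where one must be careful is verifying that~$L$ commutes with countable direct sums and that~$\Cat_\eta$ is genuinely localising rather than merely triangulated; both follow cleanly from the hypothesis that the relevant functors preserve countable direct sums, so no deep obstacle arises.
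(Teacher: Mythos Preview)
Your proposal is correct and follows essentially the same route as the paper: both arguments use the Five Lemma and direct-sum preservation to extend the isomorphism from~\(\CI\) to all of~\(\gen{\CI}\), then exploit the vanishing of~\(\tilde{F}\) on~\(\CC\) to identify \(\tilde{F}\) with~\(\Left F\). The only differences are cosmetic: you explicitly verify that \(\Left F\) has the three properties (which the paper leaves implicit) and you construct the isomorphism \(\tilde{F}\cong\Left F\) concretely from the complementary triangle, whereas the paper invokes the universal property of the localisation at~\(\CC\) to factor \(\tilde{F}\Rightarrow F\) through \(\Left F\Rightarrow F\) and then observes both are invertible on \(\gen{\CI}\simeq\KKcat^G/\CC\).
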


\begin{proof}
  Let \(\tilde{F}\Rightarrow F\) be a natural transformation
  with the required properties.  Since both functors involved
  are homological, the Five Lemma implies that the class of
  objects for which the natural transformation
  \(\tilde{F}\Rightarrow F\) is an isomorphism is triangulated.
  It is also closed under direct sums because both functors
  commute with direct sums.  Hence the natural transformation
  \(\tilde{F}\Rightarrow F\) is an isomorphism for all objects
  in \(\gen{\CI}\) because this holds for objects in~\(\CI\).

  Since~\(\tilde{F}\) vanishes on~\(\CC\) and is homological,
  the universal property of the localisation shows that the
  natural transformation \(\tilde{F}\Rightarrow F\) factors
  uniquely through the assembly map: \(\tilde{F}\Rightarrow
  \Left F\Rightarrow F\).  Both \(\tilde{F}\) and~\(\Left F\)
  descend to the category \(\KKcat^G/\CC\), which is equivalent
  to \(\gen{\CI}\). Since both natural transformations
  \(\tilde{F}\Rightarrow F\) and \(\Left F\Rightarrow F\) are
  invertible on objects of \(\gen{\CI}\), we get the desired
  natural isomorphism \(\tilde{F}\cong \Left F\).
\end{proof}

The critical property in Proposition~\ref{pro:assembly_unique}
is the vanishing on \(\CC\).  This cannot be expected
if~\(\widehat{G}\) has torsion.  The Baum--Connes assembly map
is an isomorphism for~\(F\) if and only if~\(F\) vanishes on
\(\CC\): one direction is trivial, and the other follows by
taking \(\tilde{F}=F\) in
Proposition~\ref{pro:assembly_unique}.  While this
reformulation of the Baum--Connes conjecture came too late to
be used in verifying the conjecture for groups, it is quite
helpful for duals of compact groups
(see~\cite{Meyer-Nest:BC_Coactions}) and probably also for
their deformations.

\section{Conclusion}
\label{sec:conclusion}

The idea of localisation -- central both in homological algebra
and in homotopy theory -- is becoming more important in
non-commutative topology as well.  When refined using
homological ideals, it unifies various new and old universal
coefficient theorems, the Baum--Connes conjecture, and its
extensions to quantum groups.

Homological ideals provide some basic topological tools in the
general setting of triangulated categories.  This includes
\begin{itemize}
\item important notions from homological algebra like
  projective resolutions and derived functors (these were
  already dealt with in~\cite{Meyer-Nest:Homology_in_KK});
\item an efficient method to check that pairs of subcategories
  in a triangulated category are complementary;
\item some control on how objects of the category are
  constructed from generators, that is, from the projective
  objects for the ideal;
\item a natural spectral sequence that computes the
  localisation of a homological functor from its values on
  generators.
\end{itemize}
Since the assumptions on the underlying category are quite
weak, all this applies to equivariant bivariant
\(\K\)\nb-theory.

We have applied this general machinery to construct the
Baum--Connes assembly map for torsion-free quantum groups,
whose domain is of topological nature in the sense that it can
be computed by topological techniques such as spectral
sequences.  But much remains to be done here.  The three main
issues are to understand torsion in locally compact quantum
groups, to adapt the reduction and induction functors to exotic
torsion phenomena, and to check whether the assembly map is an
isomorphism.  These problems are mainly \emph{analytical} in
nature.

\begin{bibdiv}
  \begin{biblist}
\bib{Adams:Stable_homotopy}{book}{
  author={Adams, J. F.},
  title={Stable homotopy and generalised homology},
  note={Chicago Lectures in Mathematics},
  publisher={University of Chicago Press},
  place={Chicago, Ill.},
  date={1974},
  pages={x+373},
  review={\MRref {0402720}{53\,\#6534}},
}

\bib{Beilinson-Bernstein-Deligne}{article}{
  author={Be\u {\i }linson, Alexander A.},
  author={Bernstein, Joseph},
  author={Deligne, Pierre},
  title={Faisceaux pervers},
  book={ title={Analysis and topology on singular spaces, I (Luminy, 1981)}, series={Astérisque}, volume={100}, publisher={Soc. Math. France}, place={Paris}, date={1982}, },
  pages={5--171},
  language={French},
  review={\MRref {751966}{86g:32015}},
}

\bib{Beligiannis:Relative}{article}{
  author={Beligiannis, Apostolos},
  title={Relative homological algebra and purity in triangulated categories},
  journal={J. Algebra},
  volume={227},
  date={2000},
  number={1},
  pages={268--361},
  issn={0021-8693},
  review={\MRref {1754234}{2001e:18012}},
}

\bib{Boardman:Conditionally}{article}{
  author={Boardman, J. Michael},
  title={Conditionally convergent spectral sequences},
  conference={ title={Homotopy invariant algebraic structures}, address={Baltimore, MD}, date={1998}, },
  book={ series={Contemp. Math.}, volume={239}, publisher={Amer. Math. Soc.}, place={Providence, RI}, },
  date={1999},
  pages={49--84},
  review={\MRref {1718076}{2000m:55024}},
}

\bib{Boekstedt-Neeman}{article}{
  author={B\"okstedt, Marcel},
  author={Neeman, Amnon},
  title={Homotopy limits in triangulated categories},
  journal={Compositio Math.},
  volume={86},
  date={1993},
  number={2},
  pages={209--234},
  issn={0010-437X},
  review={\MRref {1214458}{94f:18008}},
}

\bib{Brinkmann:Relative}{article}{
  author={Brinkmann, Hans-Berndt},
  title={Relative homological algebra and the Adams spectral sequence},
  journal={Arch. Math. (Basel)},
  volume={19},
  date={1968},
  pages={137--155},
  issn={0003-889X},
  review={\MRref {0230788}{37\,\#6348}},
}

\bib{Christensen:Ideals}{article}{
  author={Christensen, J. Daniel},
  title={Ideals in triangulated categories: phantoms, ghosts and skeleta},
  journal={Adv. Math.},
  volume={136},
  date={1998},
  number={2},
  pages={284--339},
  issn={0001-8708},
  review={\MRref {1626856}{99g:18007}},
}

\bib{Cuntz-Meyer-Rosenberg}{book}{
  author={Cuntz, Joachim},
  author={Meyer, Ralf},
  author={Rosenberg, Jonathan M.},
  title={Topological and bivariant \(K\)-theory},
  series={Oberwolfach Seminars},
  volume={36},
  publisher={Birkh\"auser Verlag},
  place={Basel},
  date={2007},
  pages={xii+262},
  isbn={978-3-7643-8398-5},
  review={\MRref {2340673}{2008j:19001}},
}

\bib{Emerson-Meyer:Euler}{article}{
  author={Emerson, Heath},
  author={Meyer, Ralf},
  title={Euler characteristics and Gysin sequences for group actions on boundaries},
  journal={Math. Ann.},
  volume={334},
  date={2006},
  number={4},
  pages={853--904},
  issn={0025-5831},
  review={\MRref {2209260}{2007b:19006}},
}

\bib{Emerson-Meyer:Dualities}{article}{
  author={Emerson, Heath},
  author={Meyer, Ralf},
  title={Dualities in equivariant Kasparov theory},
  date={2007},
  note={\arxiv {0711.0025}},
  status={eprint},
}

\bib{Goswami-Kuku:BC_quantum}{article}{
  author={Goswami, Debashish},
  author={Kuku, Aderemi O.},
  title={A complete formulation of the Baum--Connes conjecture for the action of discrete quantum groups},
  journal={$K$-Theory},
  volume={30},
  year={2003},
  number={4},
  pages={341--363},
  issn={0920-3036},
  review={\MRref {2064244}{2005h:46097}},
}

\bib{Kasparov:Novikov}{article}{
  author={Kasparov, Gennadi G.},
  title={Equivariant \(KK\)-theory and the Novikov conjecture},
  journal={Invent. Math.},
  volume={91},
  date={1988},
  number={1},
  pages={147--201},
  issn={0020-9910},
  review={\MRref {918241}{88j:58123}},
}

\bib{Kasparov-Skandalis:Buildings}{article}{
  author={Kasparov, Gennadi G.},
  author={Skandalis, Georges},
  title={Groups acting on buildings, operator \(K\)\nobreakdash -theory, and Novikov's conjecture},
  journal={\(K\)\nobreakdash -Theory},
  volume={4},
  date={1991},
  number={4},
  pages={303--337},
  issn={0920-3036},
  review={\MRref {1115824}{92h:19009}},
}

\bib{Krause:Cohomological_quotients}{article}{
  author={Krause, Henning},
  title={Cohomological quotients and smashing localizations},
  journal={Amer. J. Math.},
  volume={127},
  year={2005},
  number={6},
  pages={1191--1246},
  issn={0002-9327},
  review={\MRref {2183523}{2007d:18018}},
}

\bib{MacLane:Homology}{book}{
  author={Mac Lane, Saunders},
  title={Homology},
  series={Classics in Mathematics},
  note={Reprint of the 1975 edition},
  publisher={Springer},
  place={Berlin},
  date={1995},
  pages={x+422},
  isbn={3-540-58662-8},
  review={\MRref {1344215}{96d:18001}},
}

\bib{Meyer-Nest:BC}{article}{
  author={Meyer, Ralf},
  author={Nest, Ryszard},
  title={The Baum--Connes conjecture via localisation of categories},
  journal={Topology},
  volume={45},
  date={2006},
  number={2},
  pages={209--259},
  issn={0040-9383},
  review={\MRref {2193334}{2006k:19013}},
}

\bib{Meyer-Nest:BC_Coactions}{article}{
  author={Meyer, Ralf},
  author={Nest, Ryszard},
  title={An analogue of the Baum--Connes isomorphism for coactions of compact groups},
  journal={Math. Scand.},
  volume={100},
  date={2007},
  number={2},
  pages={301--316},
  issn={0025-5521},
  review={\MRref {2339371}{}},
}

\bib{Meyer-Nest:Homology_in_KK}{article}{
  author={Meyer, Ralf},
  author={Nest, Ryszard},
  title={Homological algebra in bivariant \(\textup {K}\)\nobreakdash -theory and other triangulated categories. I},
  date={2007},
  status={eprint},
  note={\arxiv {math.KT/0702146}},
}

\bib{Meyer-Nest:Bootstrap}{article}{
  author={Meyer, Ralf},
  author={Nest, Ryszard},
  title={\(C^*\)\nobreakdash -Algebras over topological spaces: the bootstrap class},
  status={eprint},
  note={\arxiv {0712.1426}},
  date={2007},
}

\bib{Meyer-Nest:Filtrated_K}{article}{
  author={Meyer, Ralf},
  author={Nest, Ryszard},
  title={\(C^*\)\nobreakdash -Algebras over topological spaces: filtrated \(\textup {K}\)\nobreakdash -theory},
  status={eprint},
  date={2007},
  note={\arxiv {0810.0096}},
}

\bib{Neeman:Triangulated}{book}{
  author={Neeman, Amnon},
  title={Triangulated categories},
  series={Annals of Mathematics Studies},
  volume={148},
  publisher={Princeton University Press},
  place={Princeton, NJ},
  date={2001},
  pages={viii+449},
  isbn={0-691-08685-0},
  isbn={0-691-08686-9},
  review={\MRref {1812507}{2001k:18010}},
}

\bib{Rosenberg-Schochet:Kunneth}{article}{
  author={Rosenberg, Jonathan},
  author={Schochet, Claude},
  title={The K\"unneth theorem and the universal coefficient theorem for equivariant \(\textup {K}\)\nobreakdash -theory and \(\textup {KK}\)-theory},
  journal={Mem. Amer. Math. Soc.},
  volume={62},
  date={1986},
  number={348},
  issn={0065-9266},
  review={\MRref {0849938}{87k:46147}},
}

\bib{Vaes:Induction_Imprimitivity}{article}{
  author={Vaes, Stefaan},
  title={A new approach to induction and imprimitivity results},
  journal={J. Funct. Anal.},
  volume={229},
  year={2005},
  number={2},
  pages={317--374},
  issn={0022-1236},
  review={\MRref {2182592}{2007f:46065}},
}

\bib{Voigt:Baum-Connes_qSU2}{article}{
  author={Voigt, Christian},
  title={An analogue of the Baum--Connes conjecture for the dual of $\textup {SU}_\textup q(2)$},
  status={preprint},
  date={2008},
}

  \end{biblist}
\end{bibdiv}

\end{document}